\newtheorem{theorem}{Theorem}
\theoremstyle{definition}
\newtheorem{definition}{Definition}
\newtheorem{remark}{Remark}
\numberwithin{equation}{section}
\def\co{\colon\thinspace} 
\newcommand{\R}{\mathbb{R}}
\begin{document}

\title{Movie moves for singular link cobordisms in 4-dimensional space}

\author{Carmen Caprau}
\address{Department of Mathematics, California State University, Fresno, CA 93740, USA}
\email{ccaprau@csufresno.edu}
\urladdr{}

\date{}
\subjclass[2010]{57Q45, 57M25, 57Q20}
\keywords{cobordisms, embedded graphs, movie moves, singular links}

\begin{abstract}
Two singular links are cobordant if one can be obtained from the other by singular link isotopy together with a combination of births or deaths of simple unknotted curves, and saddle point transformations. A movie description of a singular link cobordism in 4-space is a sequence of singular link diagrams obtained from a projection of the cobordism into 3-space by taking 2-dimensional cross sections perpendicular to a fixed direction.
We present a set of movie moves that are sufficient to connect any two movies of isotopic singular link cobordisms.
\end{abstract}
\maketitle

\section{Introduction}\label{sec:intro}
Much as circles can be embedded in $\R^3$, giving rise to knots and links, surfaces can be embedded in $\R^4$. Knotted surfaces can be studied diagrammatically via a projection (diagram) in $\R^3$. Such a diagram can be cut by planes that are perpendicular to a fixed direction in the 3-space of the projection, giving rise to a movie description for a knotted surface.
Carter and Saito~\cite{CS} found a set of fifteen moves, called \textit{movie moves}, such that two movies represent isotopic knotted surfaces if and only if there is a finite sequence of moves from this set or interchanges of the levels of distant critical points that takes one to the other. The result in~\cite{CS} generalizes Roseman's set of seven moves~\cite{Ros}, which are moves on projections where a height function has not been specified in the 3-space of the projection, and are analogous to Reidemeister moves for classical link diagrams. 

Singular links are embeddings of 4-valent graphs in $\R^3$ considered up to rigid-vertex isotopies, and are interesting objects which enrich the field of knot theory. For rigid-vertex isotopies we refer the reader to Kauffman's work~\cite{Ka1, Ka2}.  We can go one dimension higher to consider cobordisms of singular links and study them up to ambient isotopy relative to boundary. We say that two singular links are cobordant if one can be obtained from the other by singular link isotopy and Morse transformations.

Analogous to knotted surfaces and, more generally, knot/link cobordisms, singular link cobordisms can be studied diagrammatically using their projections in 3-space or by their movie representations. One then may desire to have at hand a set of Reidemeister-type moves that describe diagrams of isotopic singular link cobordisms in 4-space. Even more so, upon fixing a height function on the 3-space of the projection and working with movie descriptions of these projections, one may ask: ``What are the movie moves for isotopic singular links cobordisms?" The purpose of our paper is to answer this question.  

In order to find a complete list of movie moves that are sufficient to construct any isotopy of singular link cobordisms, we develop first an analogy with the Reidemeister-type moves for singular link diagrams. Then, by examining  the generic critical points and transverse intersections in 4-space as an isotopy between singular link cobordisms occurs, we obtain the Roseman-type moves and their movie parametrizations for singular link cobordisms. In addition, by considering all possible changes in folds and in the relative position of the critical points of multiple-point strata and folds (where these changes preserve the topology of the projection) we arrive at a list of twenty-four movie moves for singular link cobordisms.
Our main result is the following Movie-Move Theorem:

\begin{theorem}\label{thm:movie-moves}
Two movies represent isotopic singular link cobordisms if and only if they are related to each other by a finite sequence of moves for movies depicted in Figures~\ref{fig:Roseman}, \ref{fig:Carter-Saito}, and \ref{fig:new-reidclips} or interchanges of the levels of distant critical points.
\end{theorem}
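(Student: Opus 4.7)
My plan is to follow the strategy used by Carter--Saito for classical knotted surfaces, modified to accommodate the 4-valent vertices intrinsic to singular link cobordisms. The easy direction is immediate: each move depicted in Figures~\ref{fig:Roseman}, \ref{fig:Carter-Saito}, and \ref{fig:new-reidclips} manifestly arises from a local ambient isotopy of the cobordism in 4-space, and the interchange of levels of distant critical points is by construction compatible with ambient isotopy relative to boundary. So the work concentrates on the converse.

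For the ``only if'' direction I would place both movies in general position with respect to both the projection $\R^4 \to \R^3$ and the height function on $\R^3$, so that the only 2-dimensional cross sections that fail to be generic singular link diagrams occur at finitely many distinct heights. An ambient isotopy between the two cobordisms then yields a 1-parameter family of projections, and I would classify the codimension-1 singularities that can appear along this family. The analysis splits into two layers. \emph{First}, events in which the 3-dimensional projection changes without reference to the height function produce Roseman-type moves, already available for surface knots but here enriched by every local multi-germ involving a 4-valent vertex: a vertex crossing a fold, a vertex passing through a double-point curve, a vertex meeting a triple point, cusps on folds adjacent to a vertex, and so on. The rigid-vertex condition on singular link isotopy precisely rules out the unwanted ``vertex twist'' events and pins down the inventory in Figure~\ref{fig:Roseman}. \emph{Second}, events in which the projection is locally fixed but the height function develops a codimension-1 degeneracy generate the Carter--Saito-style movie moves together with their singular-vertex analogues in Figure~\ref{fig:new-reidclips}; these correspond to permutations in the relative heights of vertex critical points, fold critical points, and Morse critical points of the cobordism.

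The main obstacle I expect is establishing \emph{completeness} of the enumeration in each of the two layers, namely showing that no exotic interaction of a 4-valent vertex with existing strata has been omitted. To handle this I would adopt the stratified viewpoint of~\cite{CS}: the space of generic projections of singular link cobordisms carries a natural stratification whose open stratum consists of projections with only transverse double curves, isolated triple points, and rigid vertices; the codimension-1 strata are finite in number up to local ambient equivalence, and I would argue case by case that each such stratum is realized by one of the moves in the three figures. For the movie layer I would furthermore introduce a graph-like dual object recording the vertical order of critical points of folds, double curves, triple points, and vertices, and observe that any two such orderings compatible with the same 3-dimensional projection differ by a sequence of distant-critical-point interchanges together with the local reorderings encoded by Figures~\ref{fig:Carter-Saito} and~\ref{fig:new-reidclips}.

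Concretely I would carry out the argument in the order: (i) verify the easy direction by exhibiting explicit local isotopies for every move; (ii) prove a Roseman-type theorem for projections of singular link cobordisms, enumerating codimension-1 singularities and matching each to a move in Figure~\ref{fig:Roseman}; (iii) refine each Roseman move by choosing a height function in general position, producing its movie parametrizations; (iv) classify the additional codimension-1 events created by varying the height function relative to a fixed projection, matching these to Figures~\ref{fig:Carter-Saito} and~\ref{fig:new-reidclips}; and finally (v) assemble (ii)--(iv) with distant-critical-point interchanges to connect any two movies of isotopic singular link cobordisms. The case analysis in step (iv), and in particular the verification that vertex--fold and vertex--double-curve reorderings are exhausted by the moves of Figure~\ref{fig:new-reidclips}, is where I expect the bulk of the technical effort to lie.
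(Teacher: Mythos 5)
Your plan is essentially the paper's own strategy: after the routine easy direction, classify the codimension-one singularities of a generic one-parameter family of projections, splitting them into Roseman-level events (critical points of the multiple-point strata and transverse intersections between strata, the paper's types (I) and (II)) and height-function events (changes in folds, cusps, and relative heights, the paper's type (III)), exactly as your two layers propose. One small caution: the vertex-enriched Roseman-type moves (MM16--MM18, MM21--MM24) appear in Figure~\ref{fig:new-reidclips}, not Figure~\ref{fig:Roseman}, while MM19 and MM20 in that same figure are of height-function type, so when you ``match'' each layer to a figure you should allow Figure~\ref{fig:new-reidclips} to contribute to both layers; also note that a vertex crossing a triple point is not a codimension-one event (a 2-set and a 1-set are generically disjoint in 4-space), so it should drop out of your enumeration.
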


Since singular link cobordisms include classical knot/link cobordisms, the set of movie moves given here will include the Carter-Saito movie moves.  The idea of the proof of our Movie-Move Theorem is similar to that in~\cite{CS, Ca, Ros}. Although some familiarity with~\cite{CS} would be useful, we have strived to have a self-contained paper and detailed exposition.

This paper is organized as follows: In Section~\ref{sec:sing-links} we discuss the critical points and transverse intersections in 3-space as an isotopy of singular links occurs. Then we use this analysis to provide a proof of the classical result involving Reidemeister-type moves for singular link diagrams. The heart of the paper is Section~\ref{sec:cobordisms}, where we introduce singular link cobordisms and describe their generic diagrams in 3-space, analyze the singular strata of such diagrams, and finally prove Theorem~\ref{thm:movie-moves}. We finish with a few remarks and comments in Section~\ref{sec:remarks}.

\section{Singular links and their isotopies}\label{sec:sing-links}

A \textit{singular link} is an immersion of a disjoint union of circles in $\R^3$, which has finitely many singularities that are all transverse double points. A singular link can be regarded as an embedding in $\R^3$ of a 4-valent graph whose vertices have been replaced by rigid disks. Each disk has four strands attached to it, and the cyclic order of these strands is determined via the rigidity of the disk. The second definition is more convenient for our purposes. Although we will refer to our objects as singular links, for simplicity, we will keep in mind that these are merely embeddings in $\R^3$ of 4-valent graphs with rigid vertices. In this paper, we will refer to singular crossings (singularities) in a singular link as \textit{vertices}. If the underlying graph $G$ is connected, a rigid vertex embedding of it is called a singular knot; otherwise it is a singular link. For simplicity, in this paper we will use the generic term `singular links' to refer to both singular links and singular knots.

A singular link \textit{diagram} is a projection of a singular link into a plane. We work only with generic projections, that is, diagrams in which all crossings are transverse double points.
Two singular links $L_1$ and $L_2$ are called \textit{equivalent} (or \textit{ambient isotopic}) if there exists an orientation-preserving homeomorphism of $\R^3$ onto itself that maps $L_1$ to $L_2$.

Figure~\ref{fig:extended-Reid} depicts moves for singular link diagrams which we refer to as the \textit{extended Reidemeister moves}. The following statement is well-known (see for example~\cite{Ka1, Ka2}).
\begin{figure}[ht]
\[\raisebox{-13pt}{\includegraphics[height=0.4in]{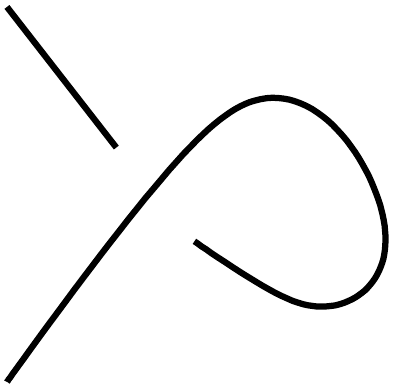}}\,\,  \stackrel{R1}{\longleftrightarrow} \,\, \raisebox{-13pt}{\includegraphics[height=0.4in]{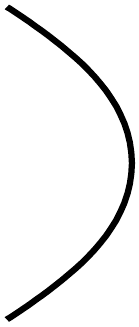}} \,\, \stackrel{R1}{\longleftrightarrow} \,\,   \reflectbox{\raisebox{17pt}{\includegraphics[height=0.4in, angle = 180]{poskink}}} \,\,\quad\,\, 
 \raisebox{-13pt}{\includegraphics[height=0.4in]{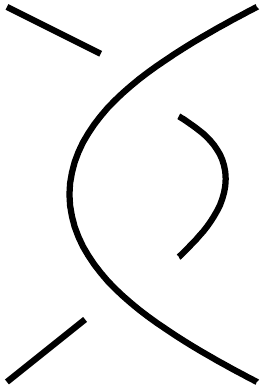}} \,\, \stackrel{R2}{\longleftrightarrow} \,\, \raisebox{-13pt}{\includegraphics[height=0.4in]{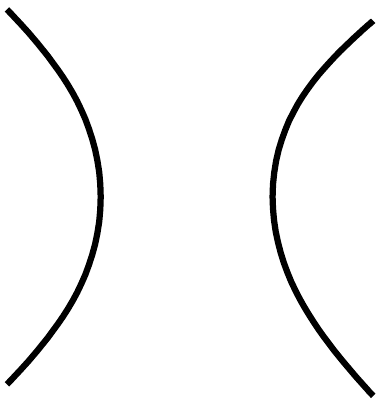}} \,\, \quad \,\, 
\raisebox{-13pt}{\includegraphics[height=0.4in]{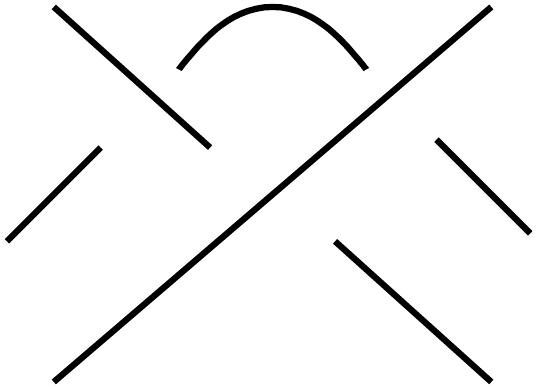}} \,\, \stackrel{R3}{\longleftrightarrow} \,\,   \raisebox{-13pt}{\includegraphics[height=0.4in]{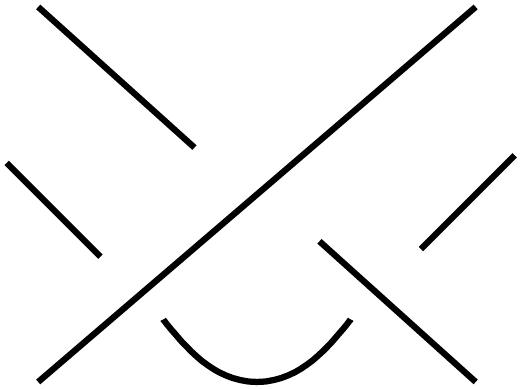}}\]

\[  \raisebox{-12pt}{\includegraphics[height=0.4in]{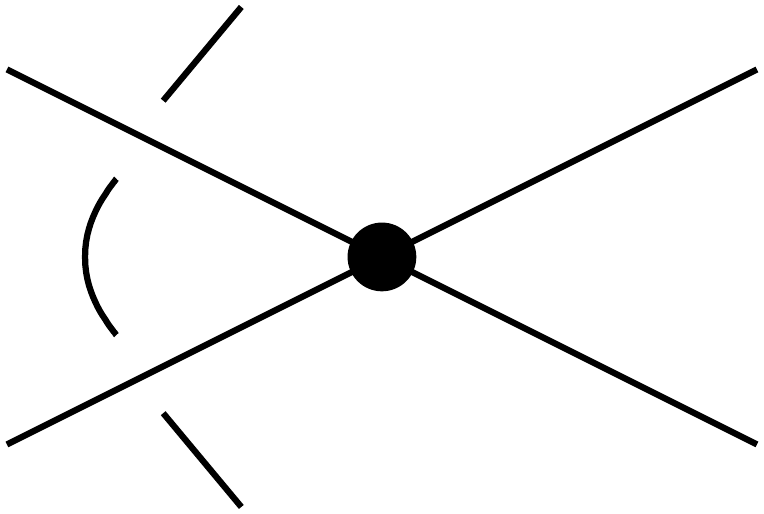}}\,\, \stackrel{R4}{\longleftrightarrow} \,\, \raisebox{-12pt}{\includegraphics[height=0.4in]{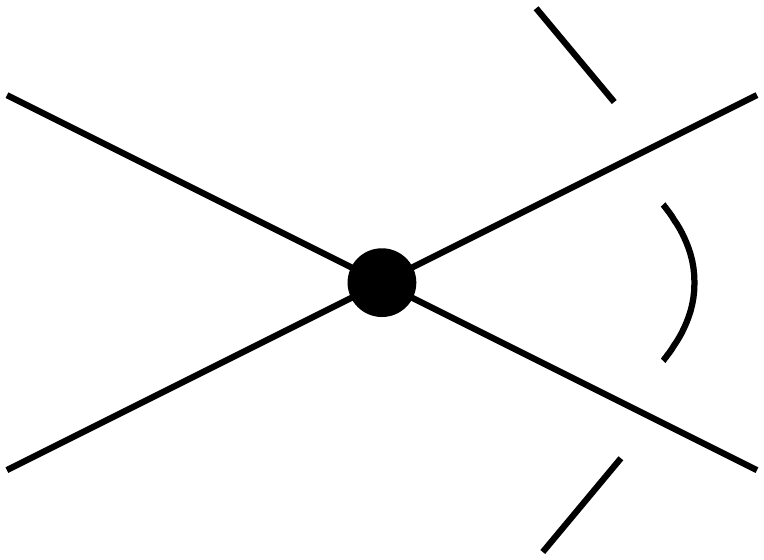}} \,\, \qquad \,\,  \raisebox{-12pt}{\includegraphics[height=0.4in]{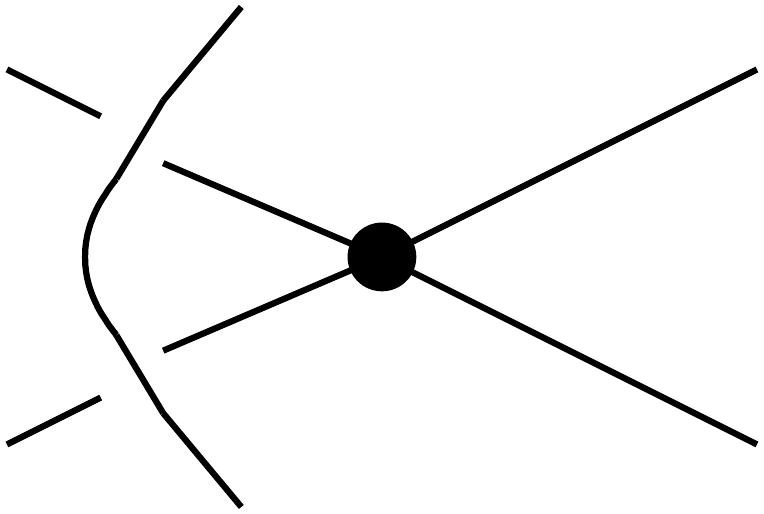}} \,\, \stackrel{R4}{\longleftrightarrow}\,\,  \raisebox{-12pt}{\includegraphics[height=0.4in]{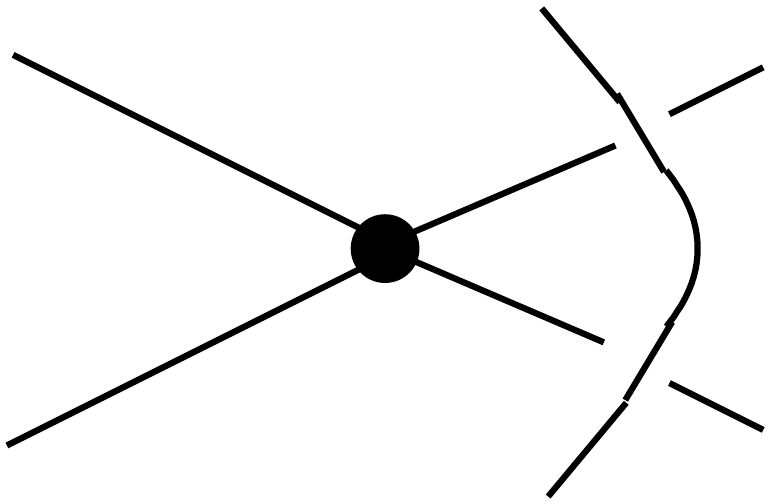}} \]\\
\[ \raisebox{-10pt}{\includegraphics[height=0.35in]{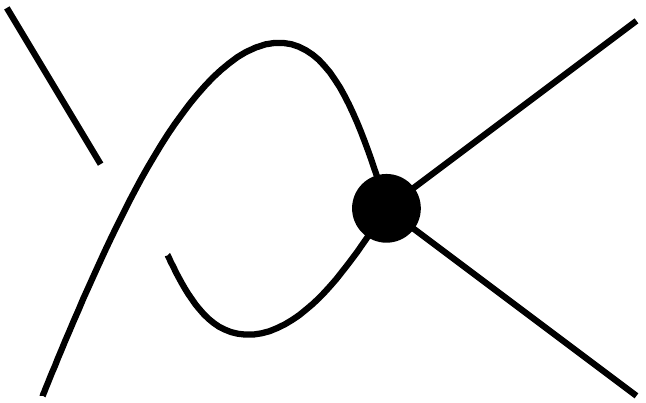}}\,\,\stackrel{R5}{ \longleftrightarrow} \,\,\reflectbox{\raisebox{-10pt}{\includegraphics[height=0.35in]{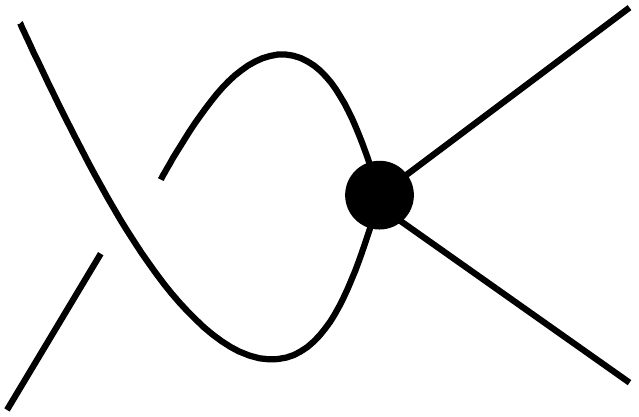}}}\]
\caption{Reidemeister-type moves for singular link diagrams}\label{fig:extended-Reid}
\end{figure}

\begin{theorem}\label{thm:isotopies}
Let $L_1$ and $L_2$ be singular links with diagrams $D_1$ and $D_2$, respectively. Then $L_1$ and $L_2$ are ambient isotopic if and only if there is a finite sequence of planar isotopies and moves $R1$ through $R5$ depicted in Figure~\ref{fig:extended-Reid} taking $D_1$ to $D_2$.
\end{theorem}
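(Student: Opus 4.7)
My plan is to argue as in Reidemeister's original proof and its generalization to rigid-vertex isotopy by Kauffman, by perturbing an ambient isotopy to be generic and tracking how the projection degenerates at a finite list of codimension-one events.

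The ``if'' direction is routine: each of the moves $R1$--$R5$ can be realized by a small localized ambient isotopy of $\R^3$ supported in a ball that meets the singular link in the portion of the diagram shown. Concatenating such local isotopies together with planar isotopies (which obviously lift to ambient isotopies of $\R^3$) produces an ambient isotopy between $L_1$ and $L_2$ whenever $D_1$ and $D_2$ are related by a finite sequence of the listed moves. The substance is in the reverse direction.

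For ``only if'', let $\{h_t\co \R^3\to \R^3\}_{t\in[0,1]}$ be an ambient isotopy with $h_0=\mathrm{id}$ and $h_1(L_1)=L_2$, and let $\pi\co \R^3\to\R^2$ be the projection used to form the diagrams. I would study the one-parameter family of immersions $\pi\circ h_t|_{L_1}$ viewed as a path in the space of immersed rigid-vertex 4-valent graphs in $\R^2$. After a small perturbation of $h_t$ (which does not change the ambient isotopy class of either endpoint), this family is transverse to the codimension-one strata of non-generic projections, so it yields a bona fide singular link diagram at all but finitely many parameter values $0<t_1<\cdots<t_n<1$. Between consecutive critical values the diagram changes only by a planar isotopy, so the theorem reduces to identifying the local change in the diagram across each $t_i$.

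The core of the proof is therefore classifying the codimension-one degeneracies a generic such path can meet. Away from vertices this is the classical Reidemeister analysis: a vertical tangent appearing or disappearing along an edge (a cusp in the projection) contributes $R1$; two branches becoming tangent in the projection contribute $R2$; and three branches passing simultaneously through a common projected point contribute $R3$. The phenomena introduced by the rigid vertices are the following: (a) the projected image of a vertex momentarily crosses a distant strand, which, depending on whether that strand projects above or below the vertex at the critical instant, gives one of the two cases of $R4$; and (b) the projected tangent directions of two of the four edges incident to a rigid vertex momentarily coincide at the vertex, producing $R5$. In each case, a short check of the over/under data before and after the critical time reproduces exactly the pictures in Figure~\ref{fig:extended-Reid}.

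The main obstacle is verifying that this list is complete. I expect the hard part to be using transversality together with the rigidity constraint (that the cyclic order of the four edges at each vertex is fixed) to rule out any further local models in codimension one near a vertex, and to show that apparent extra events, such as a distant strand tangent to an incident edge at the vertex or two incident edges crossing each other away from the vertex, have codimension at least two and can be avoided by a further small perturbation. Once this case analysis is complete, the sequence of local modifications realized at $t_1,\dots,t_n$ furnishes the required finite sequence of moves $R1$--$R5$ carrying $D_1$ to $D_2$.
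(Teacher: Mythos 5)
Your proposal is correct and follows essentially the same strategy as the paper: perturb the ambient isotopy to be generic, reduce to a finite sequence of codimension-one events in the family of projections, and identify each such event with one of the moves $R1$--$R5$. The paper casts these same five events in the ``space-time'' language of $\R^2\times[0,1]$ (branch points, optima of double-point arcs, crossing-vertex points, triple points, singular intersection points) because that framing is what it then lifts one dimension up for Theorem~\ref{thm:movie-moves}, whereas you describe them as codimension-one degeneracies of the projection (cusps, tangencies, triple points, vertex-over-strand, and coinciding projected tangent directions at a vertex); these are two descriptions of the same classification.
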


\begin{proof}
Although this is a classical result, we sketch its proof below. The main reason for this is to develop arguments which will be used as building blocks and as an analogy to the one-dimensional higher case which is the main point of this paper.

It is easy to see that if two diagrams differ by a finite sequence of the extended Reidemeister moves, then they represent equivalent singular links.

Suppose now that $L_1$ and $L_2$ (with diagrams $D_1$ and $D_2$, respectively) are equivalent singular links, which are rigid-vertex embeddings in $\R^3$ of a graph $G$.
Let $H \co G \times [0, 1] \to \R^3 \times [0, 1]$ denote an isotopy between $L_1$ and $L_2$, and let $p \co \R^3 \times [0, 1] \to \R^2 \times [0, 1]$ denote the projection map $\R^3 \to \R^2$ times the identity map on $[0, 1]$. Classifying the singularities of the function $p \circ H$ provides a proof for the sufficiency of the extended Reidemeister moves to relate diagrams of isotopic singular links. To be specific, each type of the extended Reidemeister move corresponds to a singularity of the function $p \circ H$.

The $0$-dimensional set of the diagram $D_1$ is composed by its vertices and crossing points, which trace out 1-dimensional sets during the isotopy taking $D_1$ to $D_2$. When studying Reidemeister-type moves for singular links, we need to examine the transverse intersections and the Morse critical points of the resulting 1-dimensional sets in $\mathbb{R}^2 \times [0,1]$ as an isotopy occurs. Here we work with the projection $\pi \co \R^2 \times [0, 1] \to [0, 1]$ as a height function. The map $\pi$  can be perturbed so that it is a Morse function on the 1-dimensional sets mentioned above. 

Without loss of generality, we may assume that the critical points and transverse intersections occur at different times during an isotopy. Moreover, by compactness, we may assume that there are only finitely many such critical points and transverse intersections.

In what follows, we will often denote by $X \times [0, 1]$ the 1-dimensional set traced out by a vertex in the isotopy direction and refer to it as an edge. Moreover, the 1-dimensional set traced out by a crossing will be called a double-point arc/curve.

During the isotopy $H$ between the singular links $L_1$ and $L_2$, the topology of the underlying graph $G$ is unchanged. Consequently, the moves for singular link diagrams do not correspond to critical points for the edge set, as these type of critical points would change the topology of the graph. So we only consider the critical points for the double-point set.

A Reidemeister type-I move, denoted by $R1$, corresponds to a critical point (specifically, a \textit{branch point}) of the height function $\pi \circ p \circ H$ on the double-point arc formed as the loop shrinks during the move. Specifically, a branch point occurs as the (isolated) point of intersection in the retinal plane of a double-point arc with a fold line created by the loop during the isotopy. A branch point is also the boundary point for the double-point arc.  A Reidemeister type-II move, denoted by $R2$, involves two crossing points that converge during the isotopy to a (minimum or maximum) critical point of $\pi \circ p \circ H$ on a double-point arc. The \textit{twisted crossing-vertex move}, $R5$, also corresponds to a critical point of a double-point curve; in the retinal plane, a twisted crossing-vertex move represents a transverse intersection between an edge $X \times [0,1]$ and a double-point arc, resulting in an isolated point of the function $p \circ H$, which we refer to as a \textit{crossing-vertex point}.
Therefore, the moves $R1, R2$ and $R5$ correspond to critical points of the double-point set. Figure~\ref{fig:critical-points} illustrates these moves together with the corresponding critical events in the surfaces that represent these isotopies. Note that a red curve represents a double-point curve and a thick black arc depicts an edge $X \times [0,1]$.
\begin{figure}[ht]
\[ \raisebox{5pt}{\includegraphics[height=0.85in]{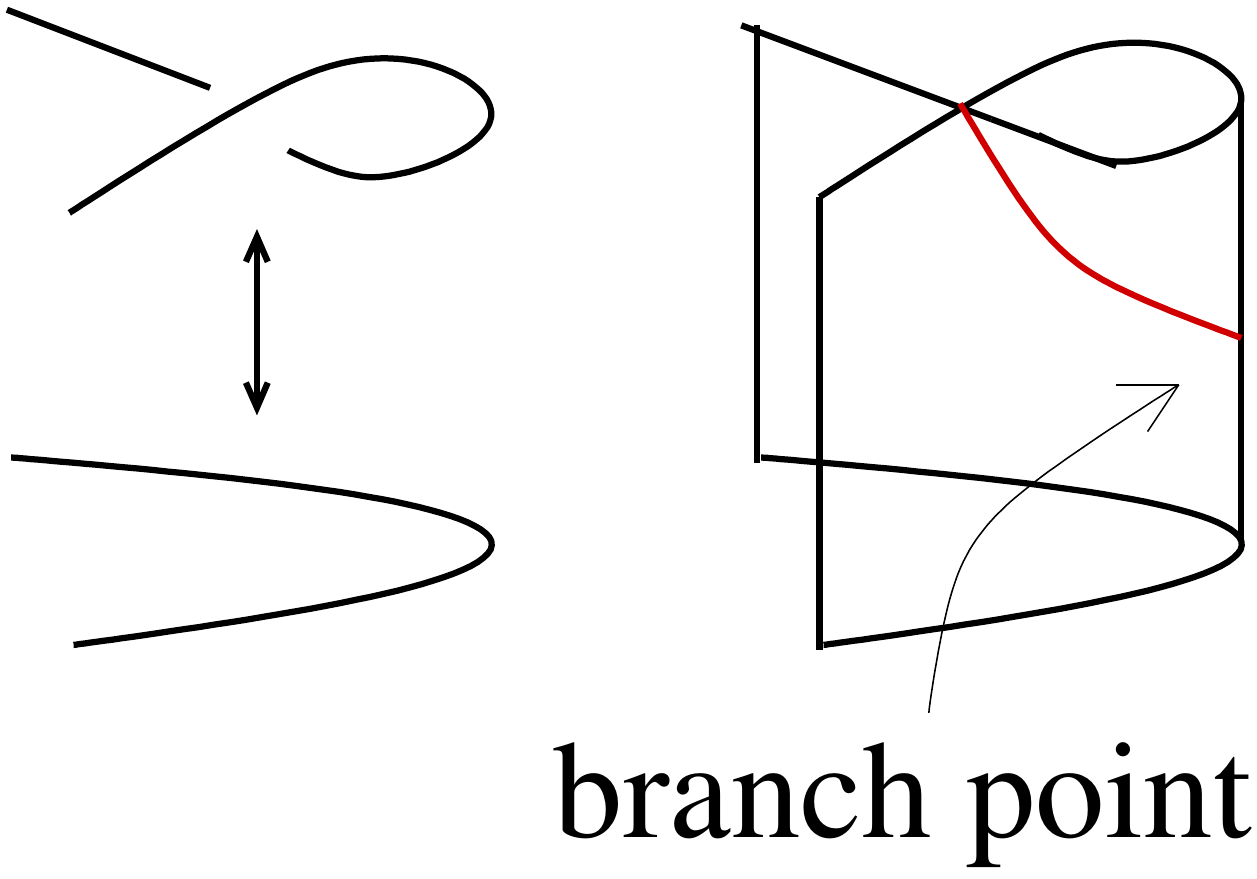}} \hspace{2cm} \raisebox{5pt}{\includegraphics[height=0.85in]{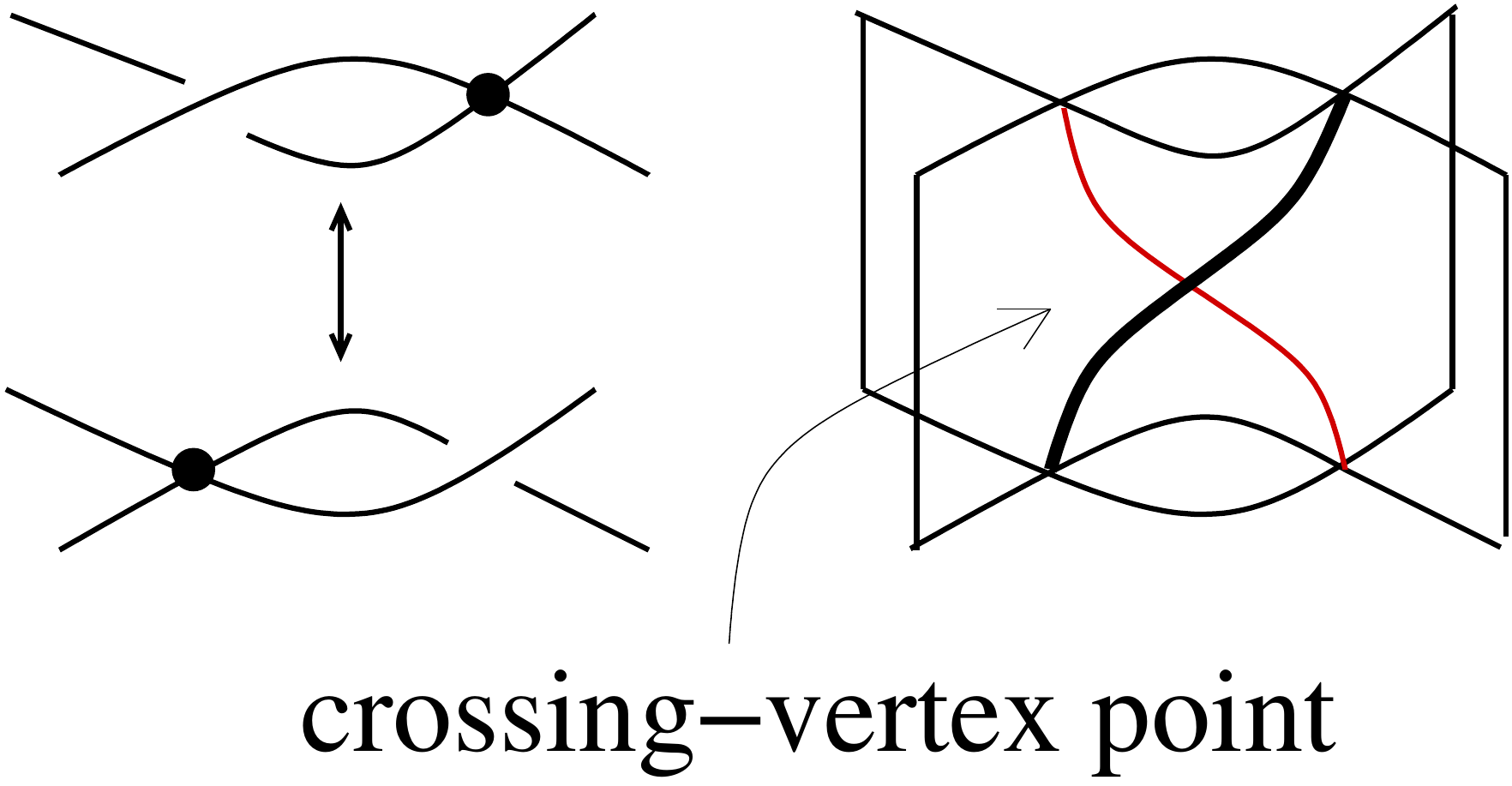}}
 \put(-260, 41){\fontsize{6}{6}R1}
 \put(-107, 41){\fontsize{6}{6}R5}\]
\[ \raisebox{5pt}{\includegraphics[height=0.85in]{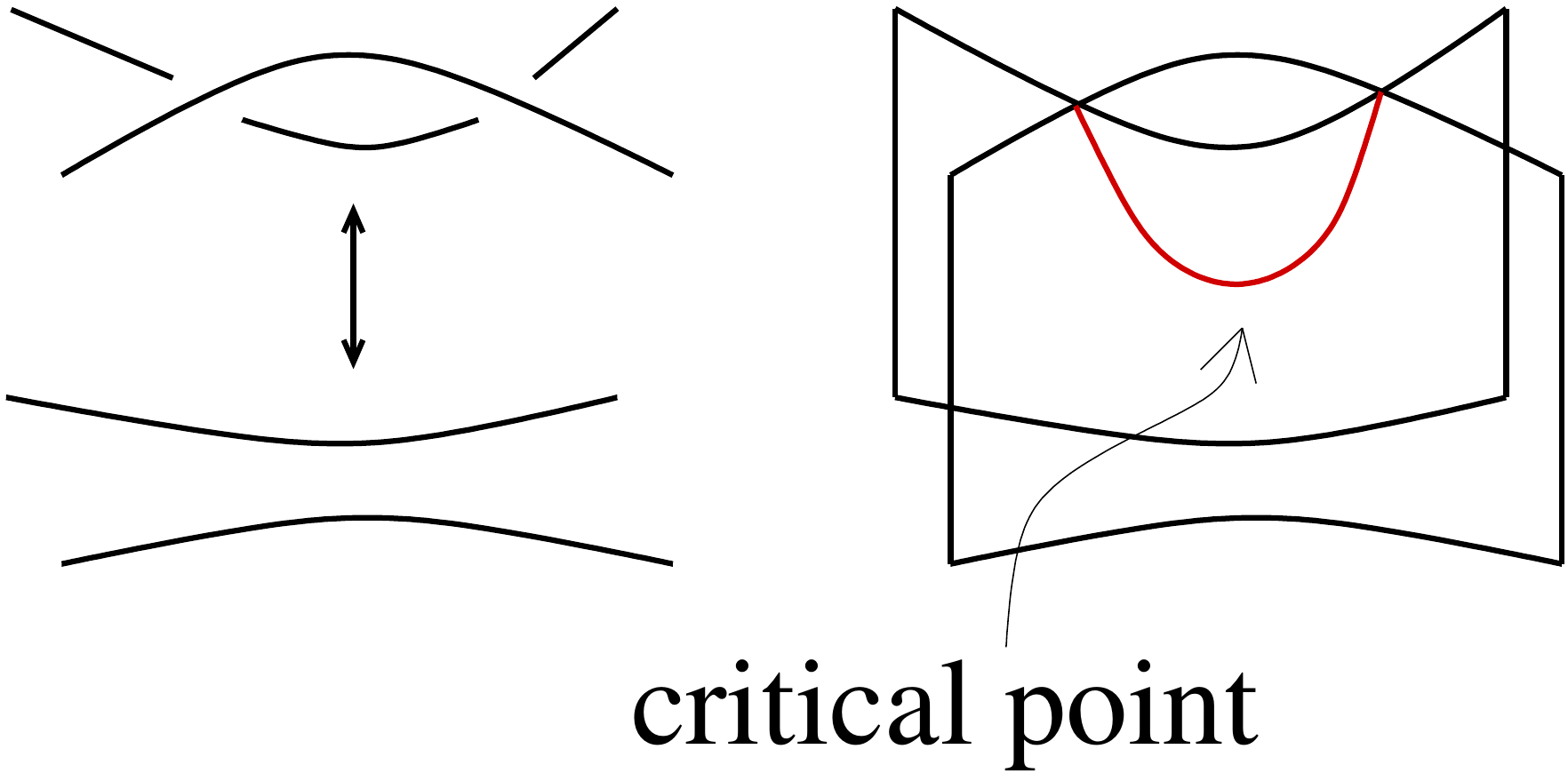}}
 \put(-111, 41){\fontsize{6}{6}R2}\]
 \caption{Moves corresponding to critical points of double-point arcs} \label{fig:critical-points}
\end{figure}

A transverse intersection in $\mathbb{R}^2 \times[0,1]$ is formed between a 1-dimensional set and a 2-dimensional set. The 1-dimensional sets that we need to consider are the edge set and the double-point set.

A Reidemeister type-III move, $R3$, represents an isolated triple point of the map $p \circ H$. During the move $R3$, a crossing point in $D_1$ traces a 1-dimensional set and intersects transversally with a 2-dimensional set in $\mathbb{R}^2 \times[0,1]$ consisting of an arc of the singular link diagram times the isotopy parameter. This move creates an isolated \textit {triple point}, as depicted on the left of Figure~\ref{fig:triple point}. During an extended Reidemeister move of type $R4$, a vertex passes through a transverse arc, either above or below; the edge $X \times [0,1]$ intersects transversally a sheet which is either entirely above or entirely below the edge $X \times [0,1]$. We call the isolated point created during this move a \textit{singular intersection point};  such a point is exemplified on the right of Figure~\ref{fig:triple point}. Therefore, the moves $R3$ and $R4$ correspond to a 1-dimensional set passing transversally through a 2-dimensional set in $\mathbb{R}^2 \times[0,1]$ (the plane of the projection times the isotopy direction). 
\begin{figure}[ht]
\[ \raisebox{5pt}{\includegraphics[height=1in]{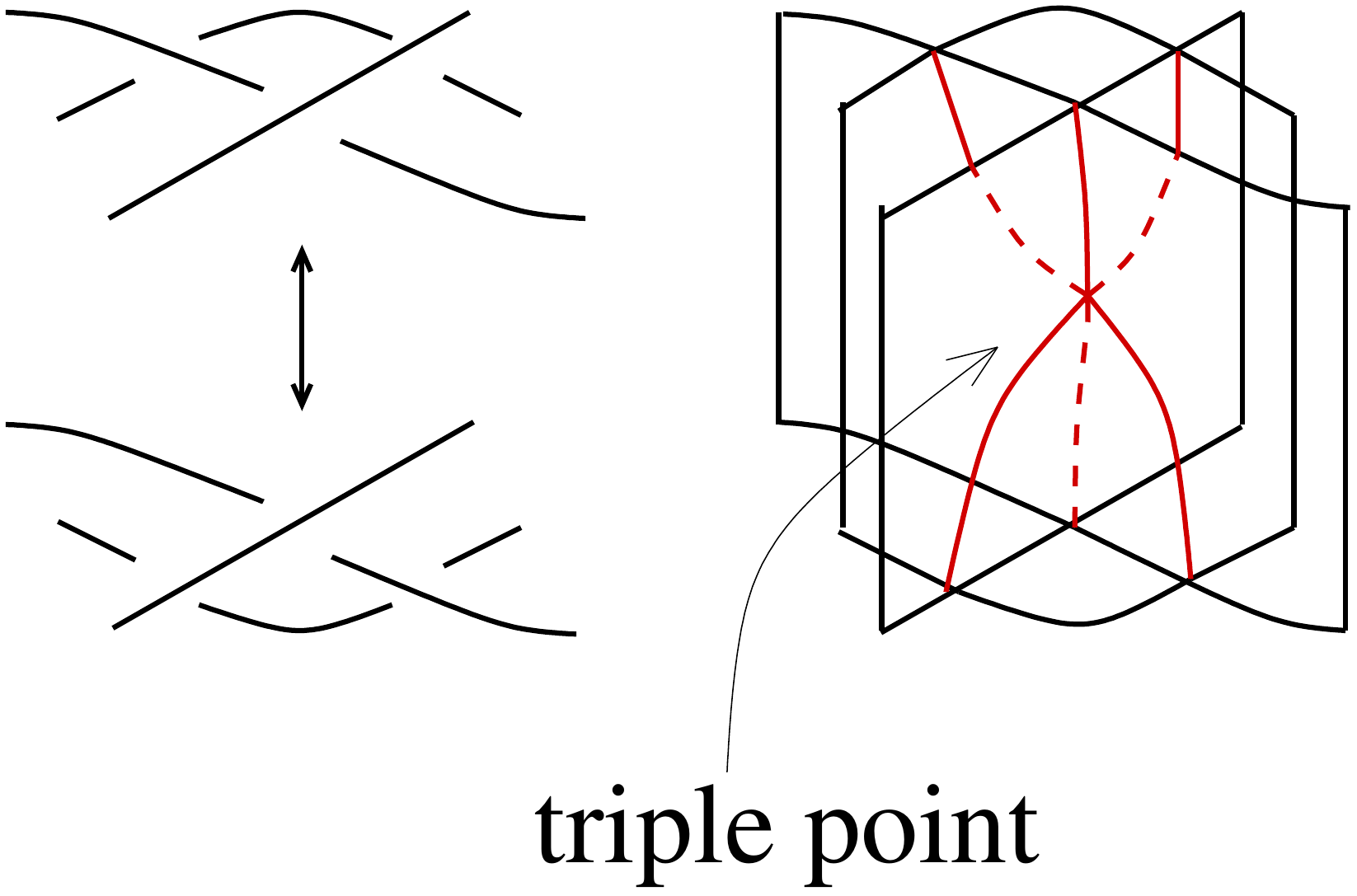}}
 \put(-102, 47){\fontsize{6}{6}R3}
 \hspace{2cm}
  \raisebox{5pt}{\includegraphics[height=1in]{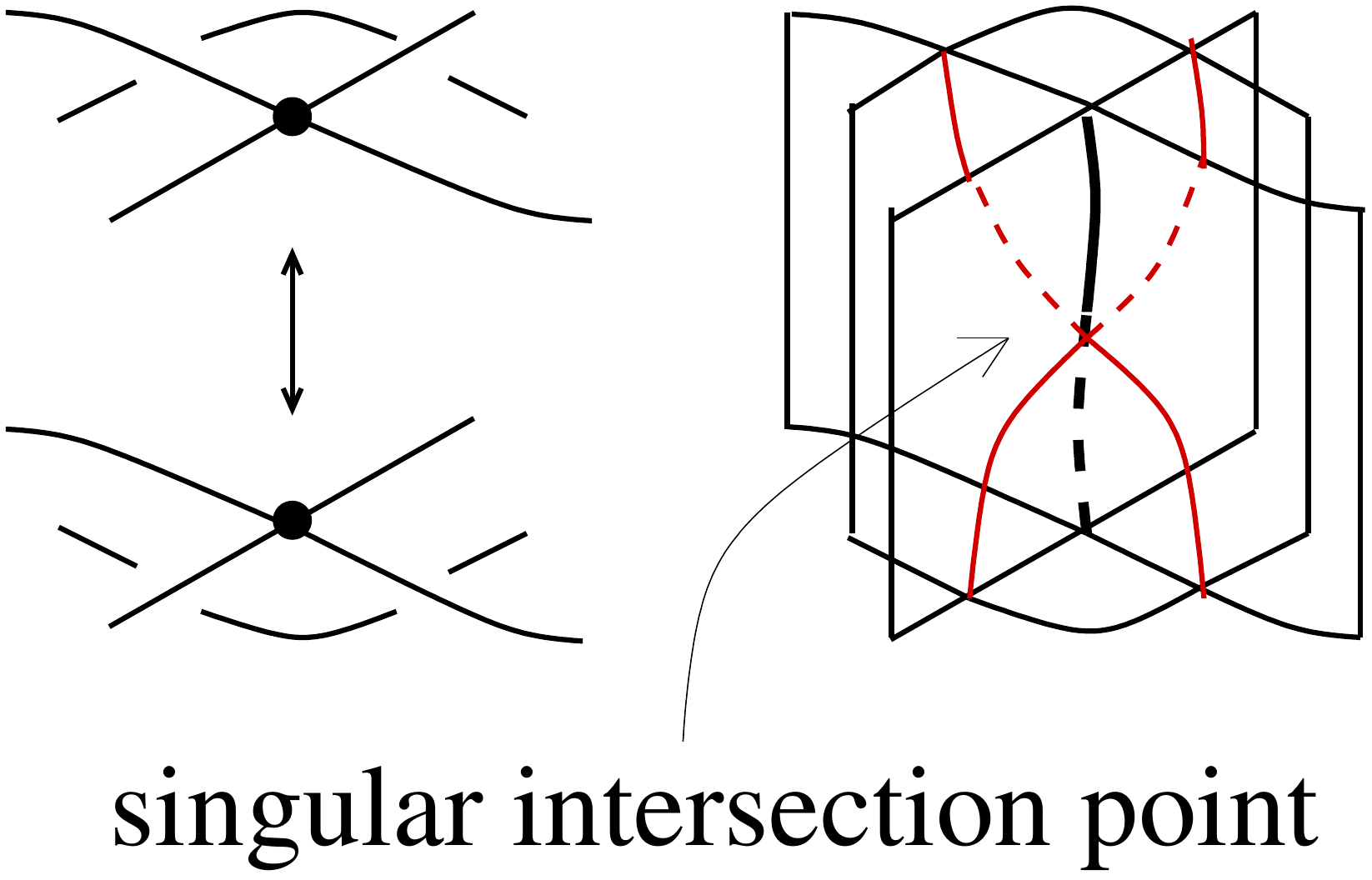}}  \put(-104, 47){\fontsize{6}{6}R4}
\]
\caption{Moves corresponding to transverse intersections in $\mathbb{R}^2 \times [0,1]$} \label{fig:triple point}
\end{figure}

The moves $R1$ through $R5$ exhaust the possibilities for critical points and transverse intersections as an isotopy between diagrams $D_1$ and $D_2$ occurs. Therefore, if diagrams $D_1$ and $D_2$ represent equivalent singular links, then are related (up to topological equivalence) by a finite sequence of the moves $R1$ through $R5$ showed in Figure~\ref{fig:extended-Reid}.
\end{proof}

Similar to the case of classical knots and links, categorical or algebraic descriptions for singular links are obtained by imposing a height function on the plane into which a given singular link is projected. In this case, the extended Reidemeister moves must be supplemented by three moves that take into account the height function. These additional moves are given in Figure~\ref{fig:additional-Reid-moves}, where the move in the middle is given only with one type of crossing. 
\begin{figure}[ht]
\[ \raisebox{-12pt}{\includegraphics[height=0.45in]{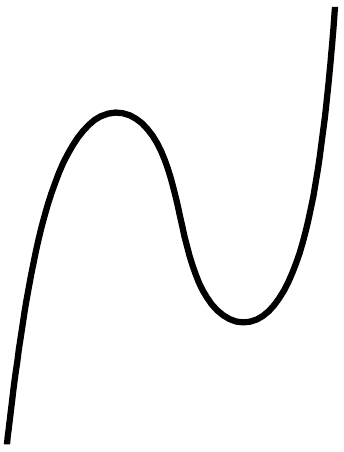}}\, \longleftrightarrow \, \raisebox{-12pt}{\includegraphics[height=0.45in]{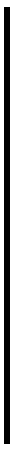}} \, \longleftrightarrow \, \reflectbox{\raisebox{-12pt}{\includegraphics[height=0.45in]{snake}}} \hspace{1cm} 
\raisebox{-12pt}{\includegraphics[height=0.45in]{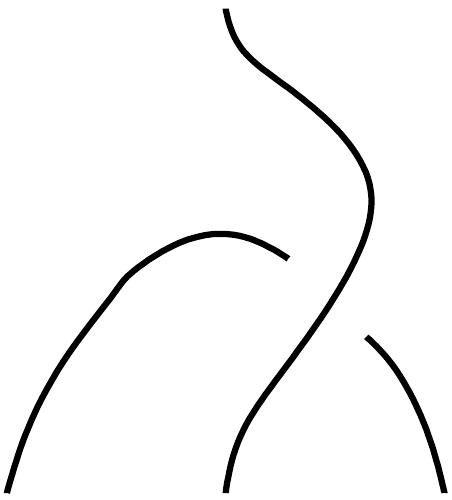}}  \longleftrightarrow  \reflectbox {\raisebox{-12pt}{\includegraphics[height=0.45in]{snake-over}}}
\hspace{1cm} 
\raisebox{-12pt}{\includegraphics[height=0.45in]{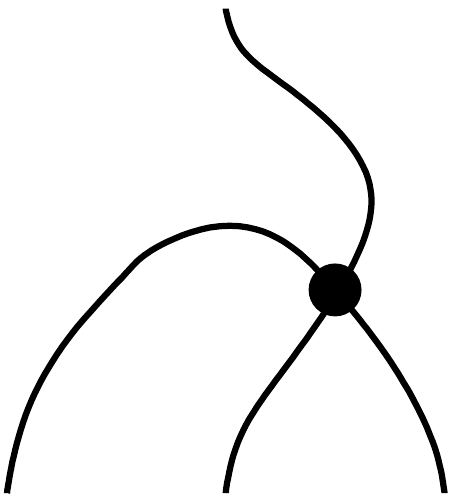}} \longleftrightarrow \reflectbox {\raisebox{-12pt}{\includegraphics[height=0.45in]{snake-vertex}}}
\]
\caption{Additional extended Reidemeister moves with height functions} \label{fig:additional-Reid-moves}
\end{figure}

Note that the moves similar to the second and third moves in Figure~\ref{fig:additional-Reid-moves} but with minima in place of maxima follow from the moves given here together with the exchange of the levels of distant critical points.

Combining the fixed height function on the plane onto which the singular link is projected with the time direction of the isotopy we obtain a projection of the singular link times an interval onto a plane. The singularities of this projection are the cusps and fold lines that are traced out by the maximal and minimal points of the singular link diagram. (Recall that any generic smooth map from a surface to the plane can be approximated by a map which has fold lines and isolated cusp points as its singularities; see~\cite{GG} for definitions and details.)

The cancelation of a local minimum and local maximum corresponds to a cusps singularity of fold lines.
Moving an arc pass a maximum point corresponds to a transverse intersection (in a plane) between a fold line and an edge or between a fold line and a double-point arc. These critical events are illustrated below:
\[\raisebox{10pt}{\includegraphics[height=1in]{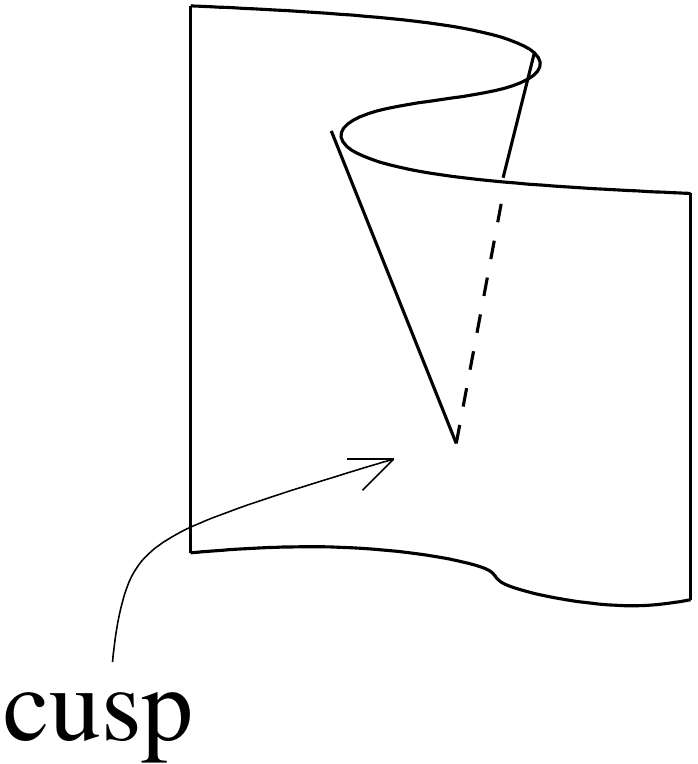}} \hspace{1.5cm} \raisebox{0pt}{\includegraphics[height=1.2in]{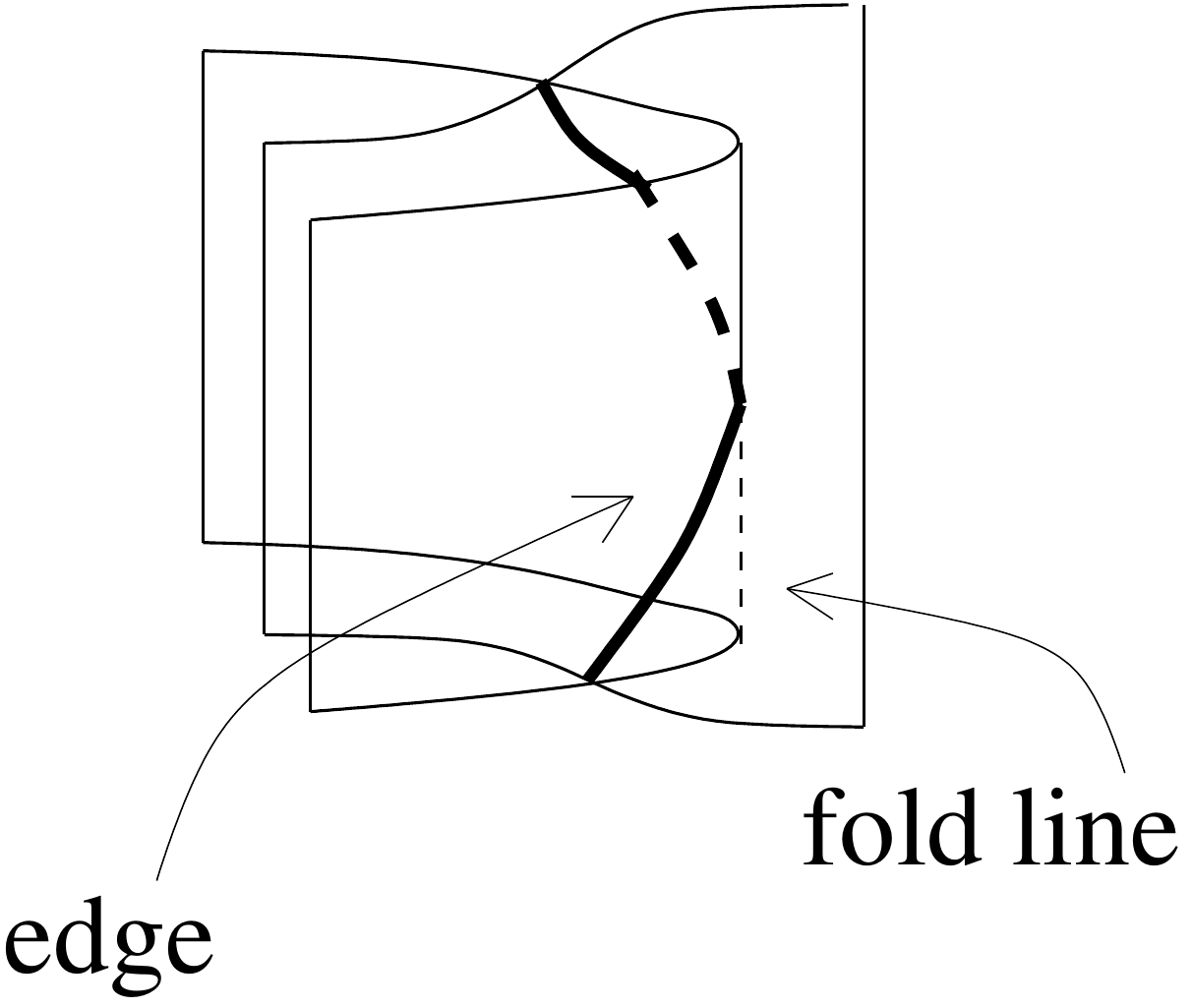}}  \hspace{0.7cm}
\raisebox{-5pt}{\includegraphics[height=1.2in]{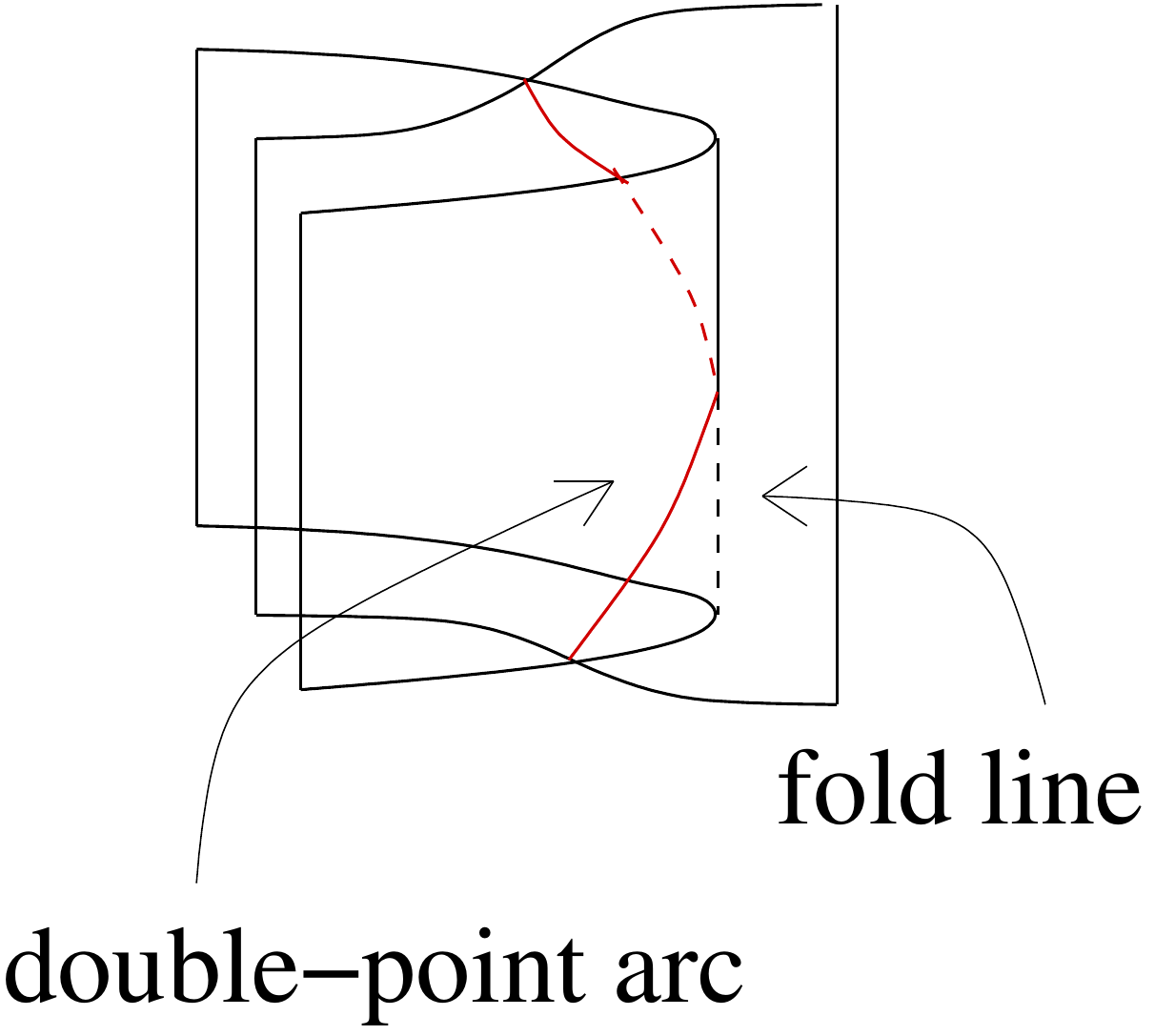}} \]

The moves in Figure~\ref{fig:additional-Reid-moves} cover all types of possible changes involving cusps and folds. Therefore, these additional moves together with moves in which the relative heights of distant critical points are interchanged are sufficient to modify the height function on any singular link (or knotted 4-valent graph).

\textbf{Conclusion.} We have studied the extended Reidemeister moves and regarded them as motions of a singular link in space. Equivalently, we interpreted the extended Reidemeister moves as 2-dimensional topological spaces embedded in 4-space. During such a move, the diagram of a singular link sweeps out a 2-dimensional set in $\mathbb{R}^2 \times [0, 1]$ (a singular surface in $\mathbb{R}^2 \times [0, 1]$) and contains multiple points and singularities. Such a singular surface is a projection onto $\mathbb{R}^2 \times [0, 1]$ of a diagram of a singular link cobordism, concepts that are discussed in the remaining of the paper.


\section{Singular link cobordisms and their isotopies}\label{sec:cobordisms}

In this section we introduce singular link cobordisms in 4-space and study them up to ambient isotopy fixed on the boundary. 

\begin{definition}
We say that two singular links $L_0$ and $L_1$ are \textit{cobordant} if one can be obtained from the other by singular link isotopy plus a combination of births or deaths of simple unknotted curves, and saddle point transformations. Cobordant singular links $L_0$ and $L_1$ are `joined' by a singular surface $C$ embedded in $\R^3 \times [0,1]$ with boundary $\partial C = (L_0 \times \{0\}) \cup (L_1 \times \{1\})$, which we call a \textit{singular link cobordism} between $L_0$ and $L_1$.
\end{definition}

Note that every point in a singular link cobordism has a neighborhood homeomorphic to a neighborhood of a point in $X \times [0, 1]$ (see Figure~\ref{fig:id-vertex}). 

\begin{figure}[ht]
\[\raisebox{-10pt}{\includegraphics[height=0.7in]{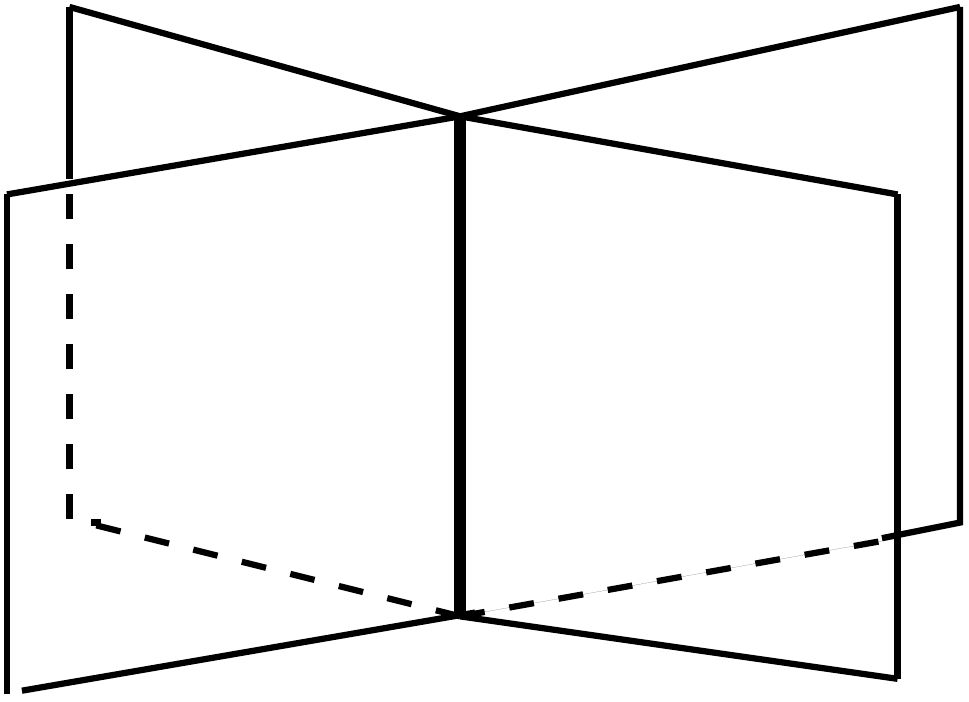}}\]
\caption{$X \times [0, 1]$}\label{fig:id-vertex}
\end{figure}

There is a natural projection from a singular link cobordism $C$ to $[0, 1]$, and the preimage of a regular value $t$ is a singular link $L_t = (\R^3 \times \{ t \}) \cap C$.

\subsection{Movie description of a singular link cobordism}
A singular link cobordism can be studied diagrammatically via a projection in $3$-space. This is analogous to the two-dimensional singular link diagrams. Consider the projection map $p \co \R^3 \times [0, 1] \to \R^2 \times [0, 1]$ which preserves the last coordinate. A \textit{diagram} of a singular link cobordism $C$  is the image, $p(C)$, of $C$ under the projection map $p$ together with crossing information for the image of the double and triple point manifolds. The map $p$ can be perturbed slightly so that the resulting diagram is generic, in the sense that the only 0-dimensional singular points in the interior of the diagram are branch points, crossing-vertex points, triple points, and singular intersection points (as defined in Section~\ref{sec:sing-links}), and such that the 1-dimensional singular strata contains double-point arcs/curves and edges of the form $X \times [0, 1]$. Note that a double-point arc/curve is caused by a transverse intersection between a pair of 2-disks. Also, a branch point is a boundary point of a double-point arc, where this boundary point is an interior point of $p(C)$.

Equivalently, a diagram of a singular link cobordism $C$ is called \textit{generic} if for any point in the image $p(C)$ there is a neighborhood in $\R^2 \times [0, 1]$ in which the image is either the intersection of the coordinate planes, or a branch point, or a crossing-vertex point.
Triple points and singular intersection points have neighborhoods in $p(C)$ which look like three transversely intersecting planes, while interior points of a double-point curve or an edge have  
neighborhoods in $p(C)$ which look like two transversely intersecting planes. 

Note that a generic diagram $p(C)$ of a singular link cobordism $C$ is a 2-dimenional CW-complex. The 0-cells are the vertices of the singular link diagrams in the boundary of $C$ and the 0-dimensional singular points in the interior of $C$ are: branch points, crossing-vertex points, triple points, and singular intersection points. The 1-cells of the diagram $p(C)$ are the edges and the double-point curves. There are four 2-cells meeting at an edge or at a double-point arc/curve of the diagram. We will sometimes refer to the 2-cells in a diagram as the \textit{facets} of the diagram.

From here on we will consider only generic diagrams for singular link cobordisms.

Movies for singular link cobordisms are obtained when a height function is fixed on the 3-space into which a given cobordism is projected. This is similar to the case of embedded surfaces (knotted surfaces) as studied in~\cite{CS}.

\begin{definition}
A projection map $\pi \co \R^2 \times [0, 1] \to [0, 1]$ is called a \textit{generic height function} for a singular link cobordism $C$ with (generic) diagram $p(C)$ if
\begin{enumerate}
\item $\pi_{\vert p(C)}$ has only non-degenerate critical points, where the set of critical points for the edge set is empty, and
\item critical points take distinct critical levels of $\pi$.
\end{enumerate}
\end{definition}
In the above definition we define critical points to include branch points, crossing-vertex points, triple points, and singular intersection points. The triple points and singular intersection points are of special type and we regard them as transverse intersections. In condition (2), we have that the critical points of facets and of double-point arcs/curves in a cobordism are at different critical levels.

\begin{definition}\label{def:generic height func}
A \textit{movie description} of a singular link cobordism $C$ is a (generic) diagram of $C$ with a fixed generic height function.
\end{definition}

In  a movie description of a cobordism $C$ we can cut its diagram $p(C)$ by planes that are perpendicular to the line that defines the height function on $\R^2 \times [0, 1]$. The planes $\R^2 \times \{t\}$ of constant $t \in [0, 1]$ can be taken at non-critical levels of the height direction; the image of the diagram $p(C)$ in the cross sectional planes are singular link diagrams, called \textit{stills}. Any diagram $p(C)$ gives rise to a movie and a movie description defines a diagram of a singular link cobordism.

Note that the time direction in the movie is given by the height function. If we restrict the movie to a small interval of time around a critical level we see two stills in the movie that differ either by an extended Reidemeister move (Reidemeister-type moves for singular link diagrams) or a Morse modification (a birth or death of a simple unknotted curve or a saddle point transformation). We call these changes \textit{elementary singular string interactions} (ESSIs for short) of movie descriptions.

Therefore, two consecutive stills in a movie description of a singular link cobordism differ by an ESSI. 
Moreover, two singular links are cobordant if their diagrams are related via a finite sequence of ESSIs.
This is the reason for imposing  the second part of condition (1) in Definition~\ref{def:generic height func}, namely that there are no critical points for the edge set. 
Equivalently, we do not include the \textit{bigon move} depicted in Figure~\ref{fig:bigon} among the ESSIs. 
\begin{figure}[ht]
\[  \raisebox{5pt}{\includegraphics[height=0.7in]{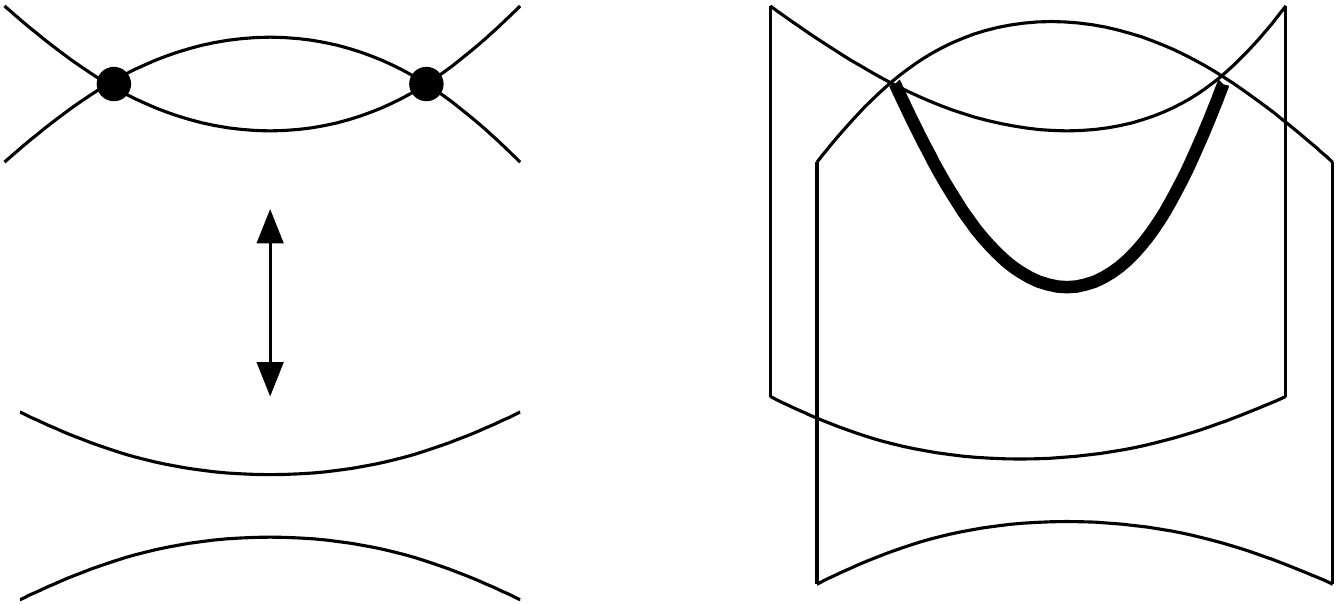}}  \]
\caption{The bigon move is not considered as an ESSI}
\label{fig:bigon}
\end{figure}

\subsection{Isotopies for singular link cobordisms}

We use movie descriptions to represent singular link cobordisms up to ambient isotopy fixed on the boundary. 

Two singular link cobordisms $C_1$ and $C_2$ in $\R^3\times[0, 1]$ are called \textit {ambient isotopic} if there exists an orientation preserving homeomorphism of $\R^3 \times [0,1]$ into itself taking $C_1$ to $C_2$ and keeping the boundary fixed.
It is clear that isotopic singular links are always cobordant. However, the converse is not true in general.

Our goal is to provide the set of movie moves for isotopic singular link cobordisms. Just as singular links contain classical knots and links as a subset, singular link cobordisms include knot and link codordisms in 4-space. Therefore, the moves for movies that relate isotopic singular link cobordisms must contain all of the fifteen Carter-Saito movie moves displayed in Figures~\ref{fig:Roseman} and \ref{fig:Carter-Saito} (see~\cite{CS}).
Note that we are using the namings for the Carter-Saito movies moves as given by Bar-Natan in~\cite{BN}. The movie moves in Figure~\ref{fig:Roseman} are the movie parametrizations of the Roseman moves to diagrams of isotopic knotted surfaces in 4-space (see~\cite{Ros}).

\begin{figure}[ht]
\[\put(27, 70){\fontsize{8}{8}$\text{MM1}$} 
\put(117, 70){\fontsize{8}{8}$\text{MM2}$}
\put(210, 70){\fontsize{8}{8}$\text{MM3}$} 
\put(303, 70){\fontsize{8}{8}$\text{MM4}$}
\raisebox{-5pt}{\includegraphics[height=1in, width = 1in]{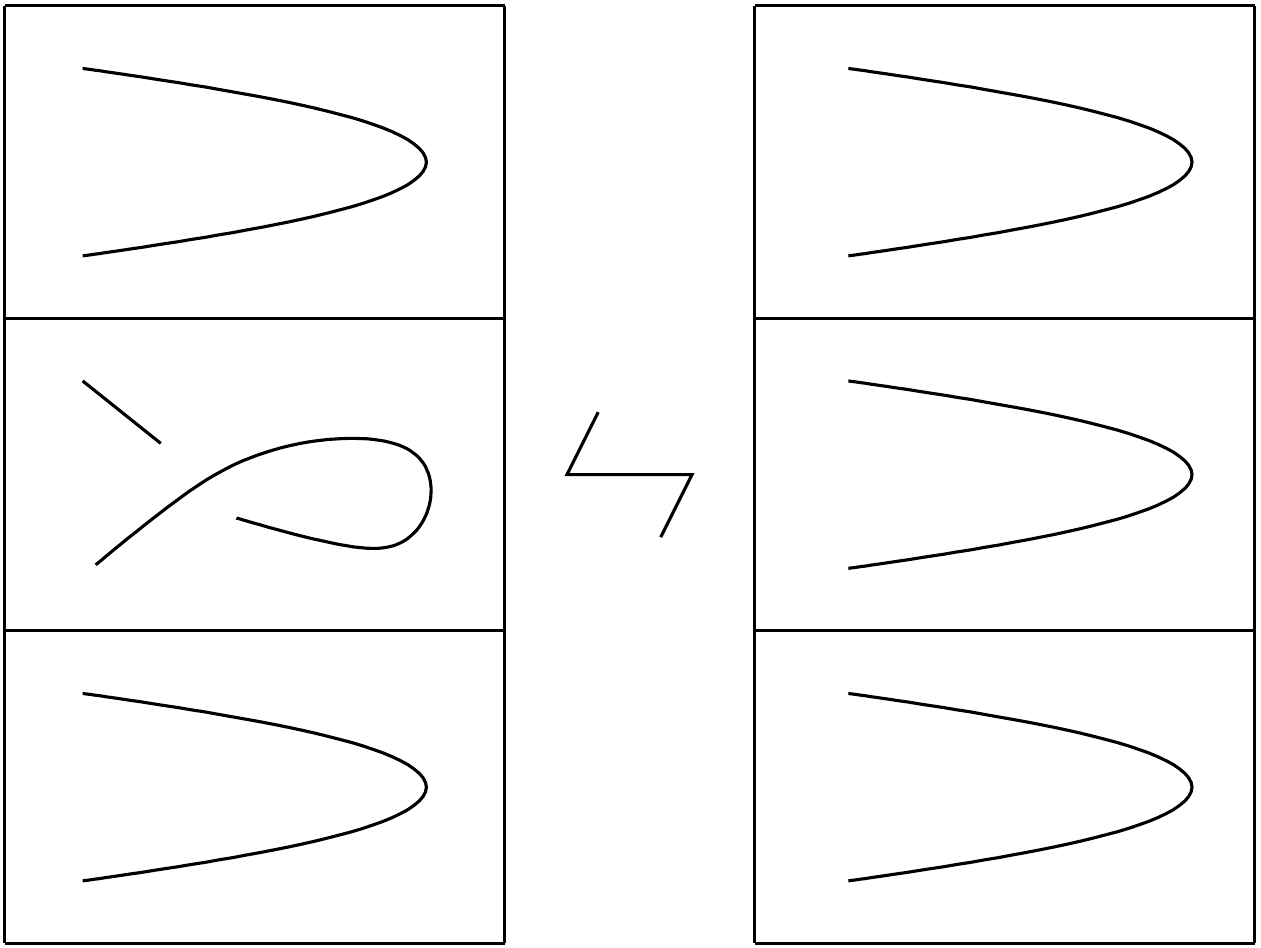}} \hspace{0.7cm} \raisebox{-5pt}{\includegraphics[height=1in, width = 1in]{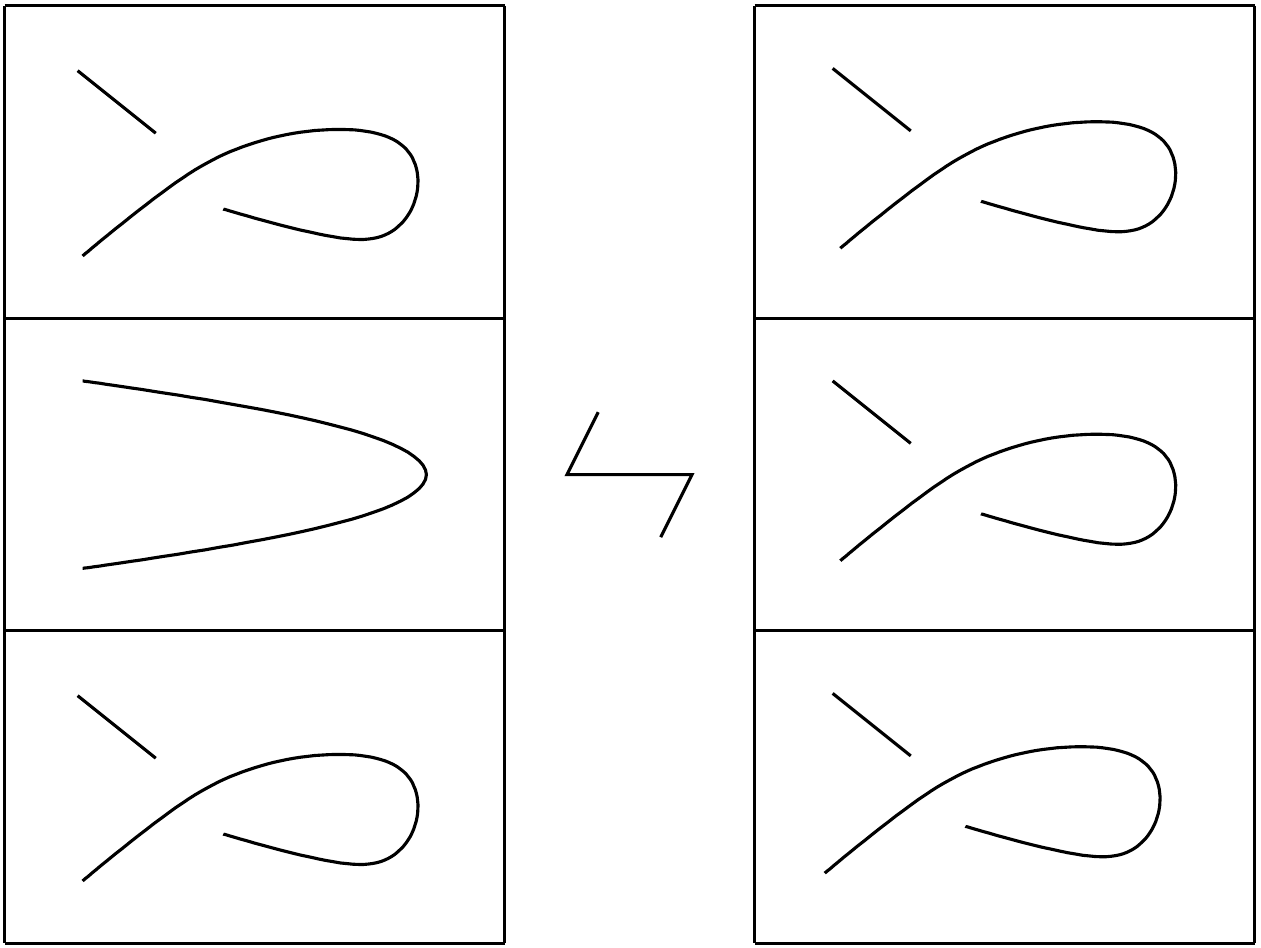}} \hspace{0.7cm} \raisebox{-5pt}{\includegraphics[height=1in, width = 1in]{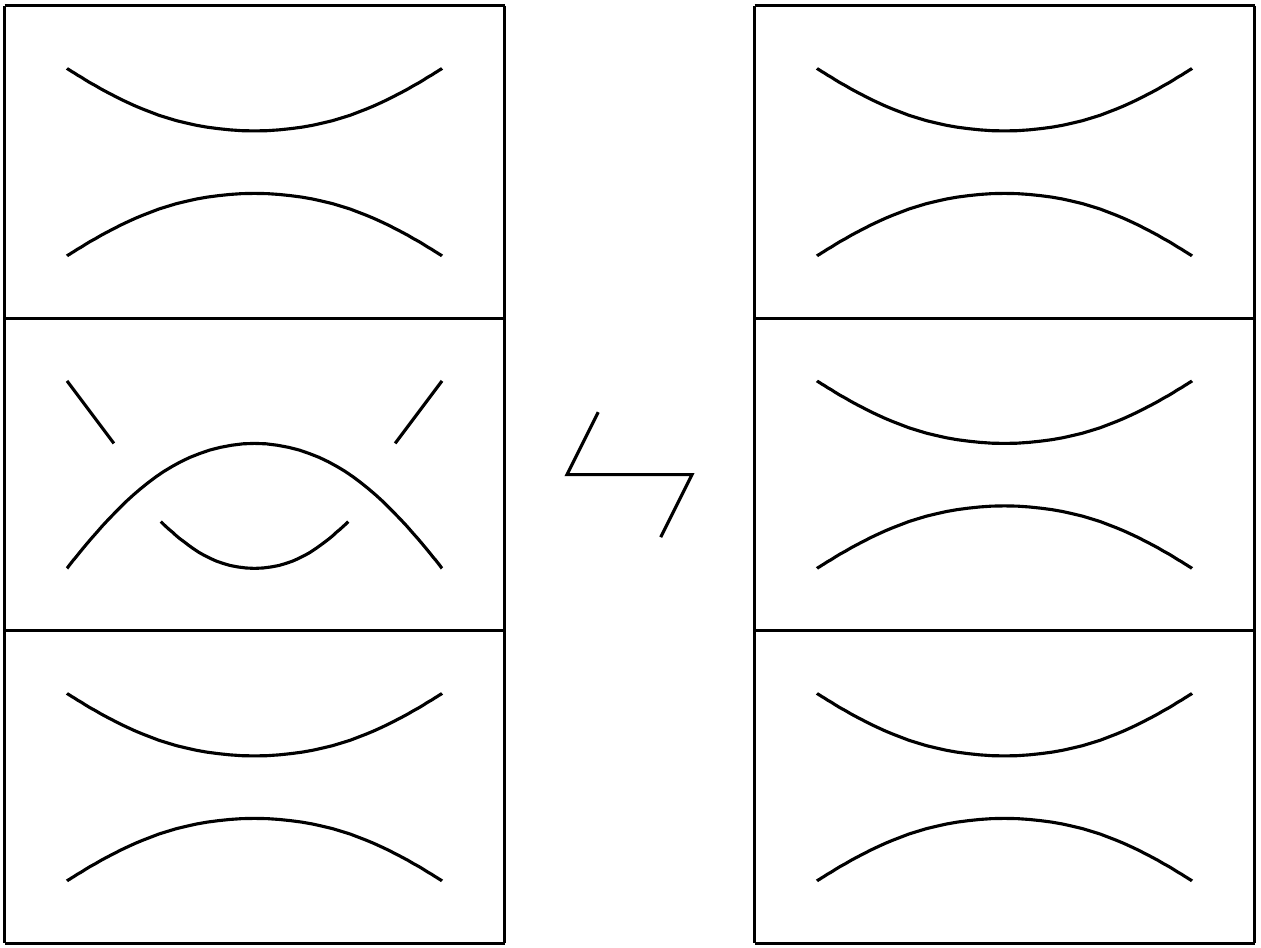}} \hspace{0.7cm} \raisebox{-5pt}{\includegraphics[height=1in, width = 1in]{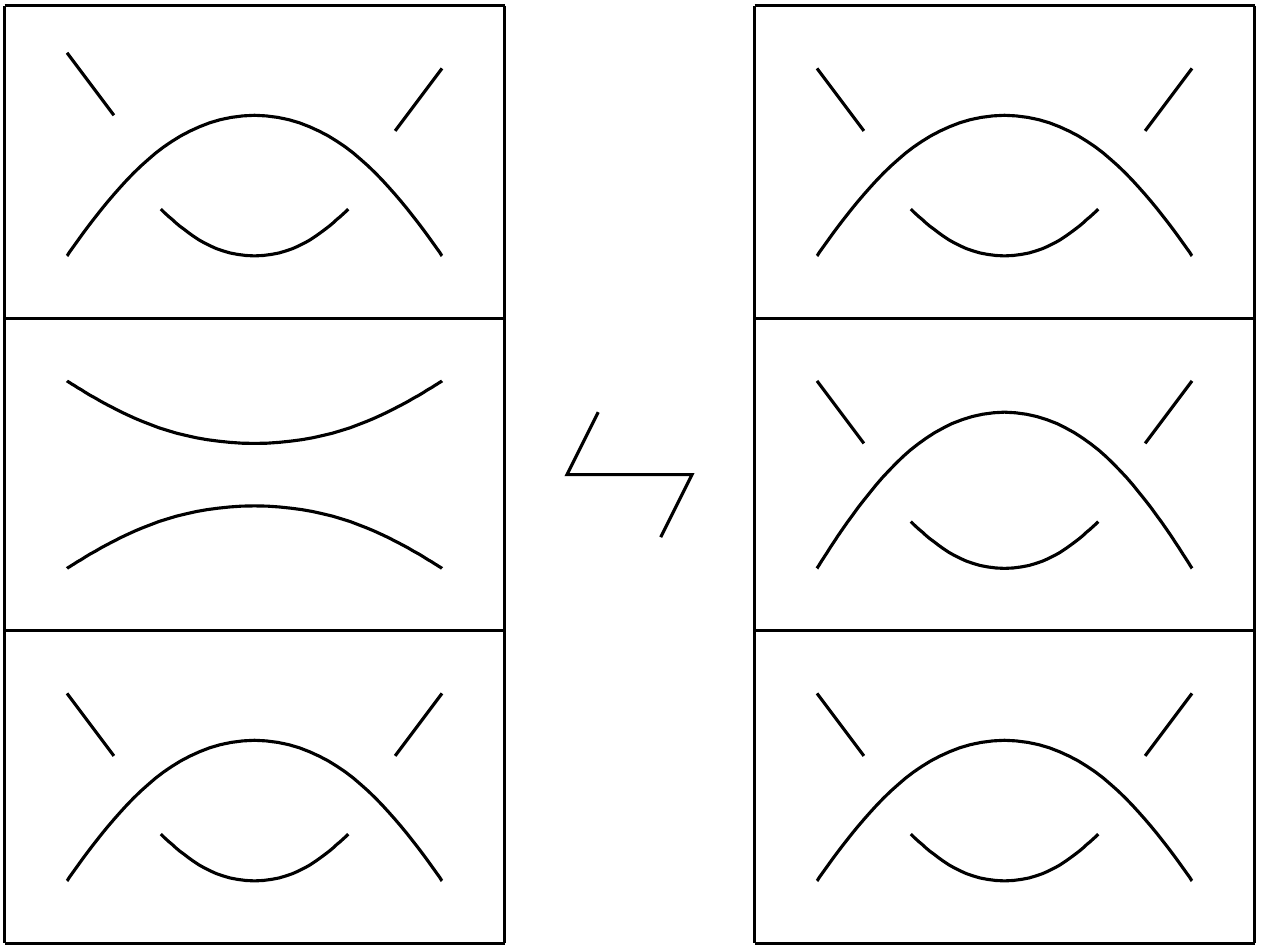}} \]

\[\put(30, -13){\fontsize{8}{8}$\text{MM5}$} 
\put(134, -13){\fontsize{8}{8}$\text{MM8}$}
\put(240, -13){\fontsize{8}{8}$\text{MM10}$}
 \raisebox{-5pt}{\includegraphics[height=1in, width = 1.1in]{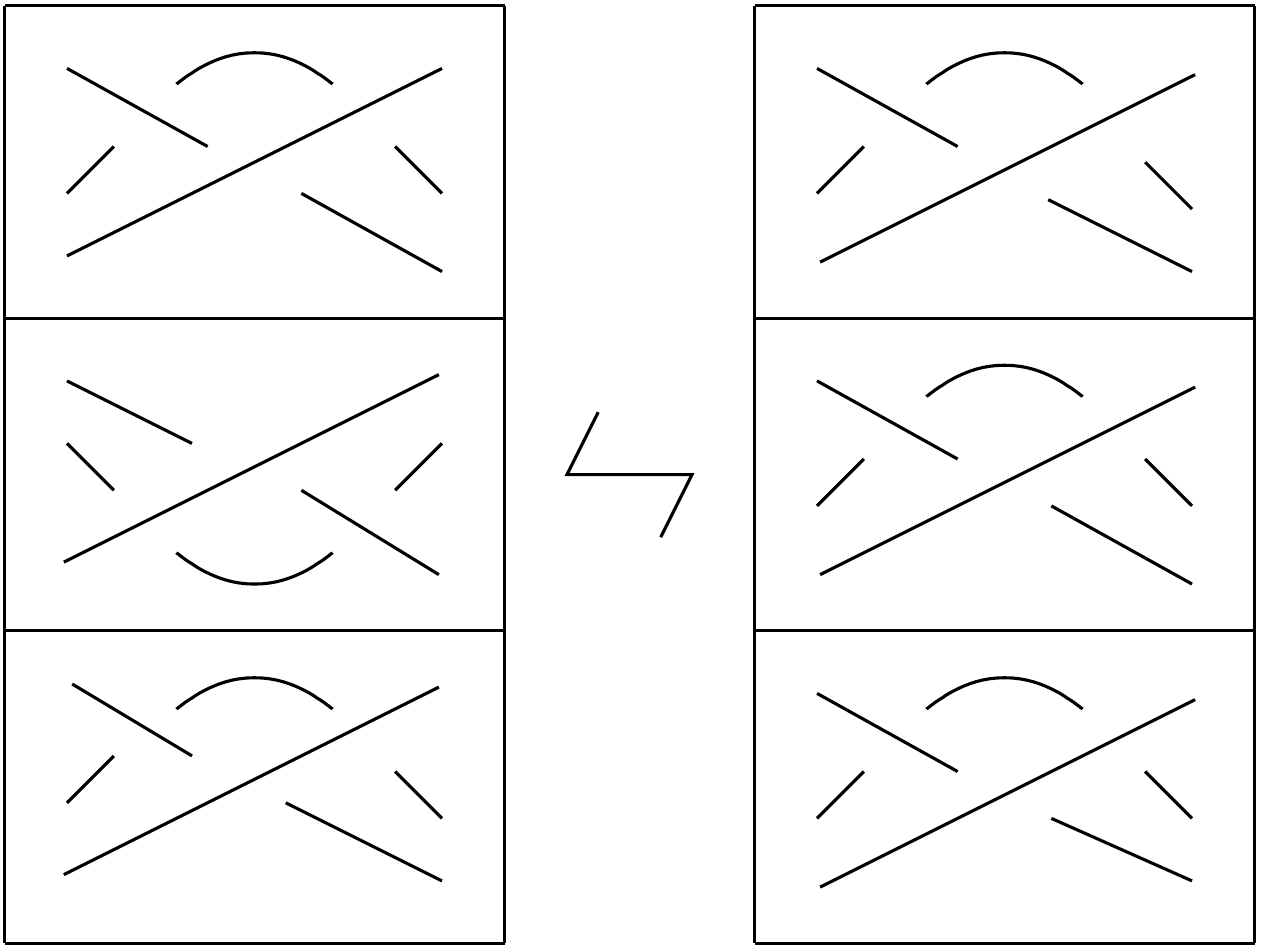}} \hspace{0.7cm} 
\raisebox{-5pt}{\includegraphics[height=1.8in, width = 1.2in]{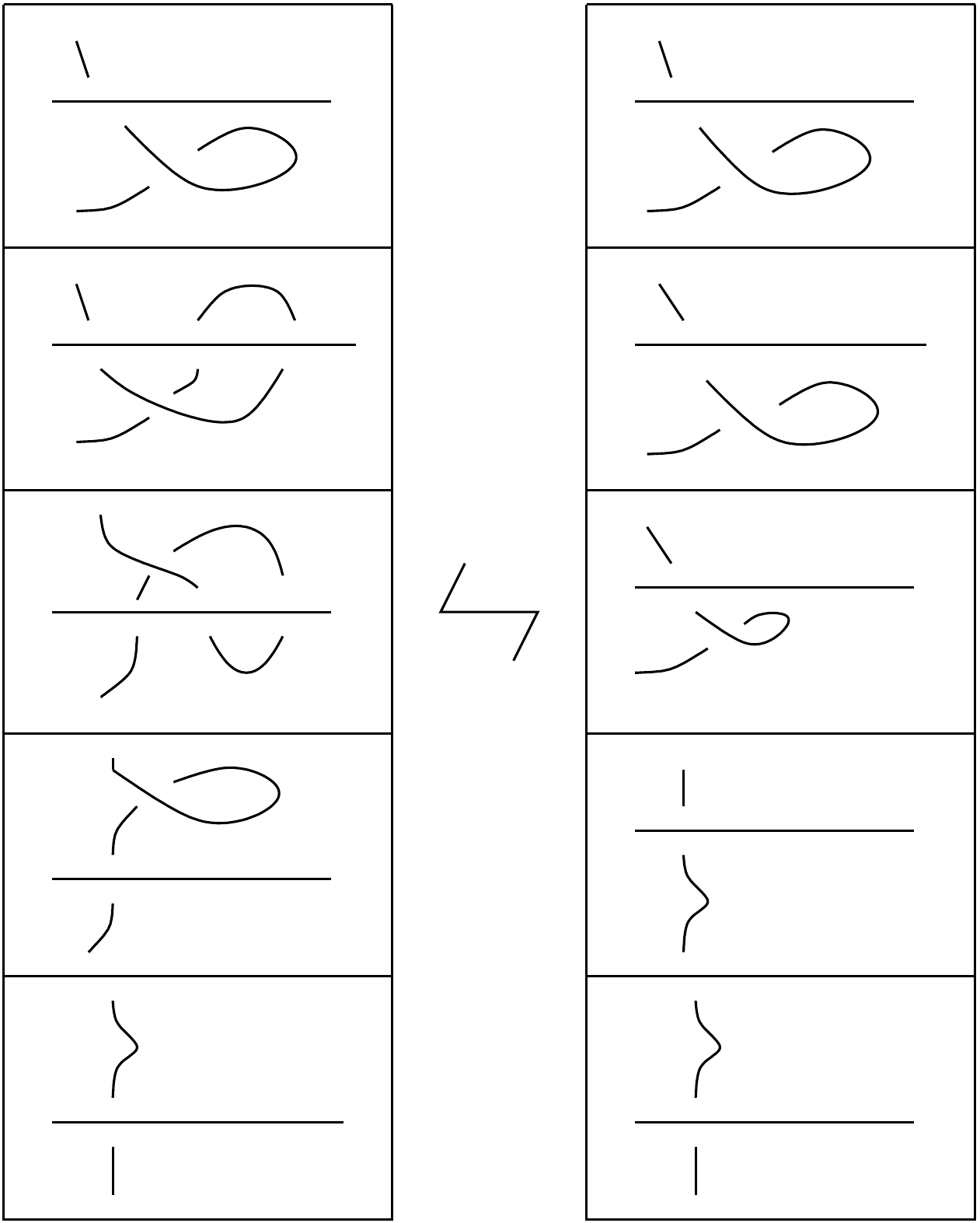}} \hspace{0.7cm} 
\raisebox{-5pt}{\includegraphics[height=1.8in, width = 1.3in]{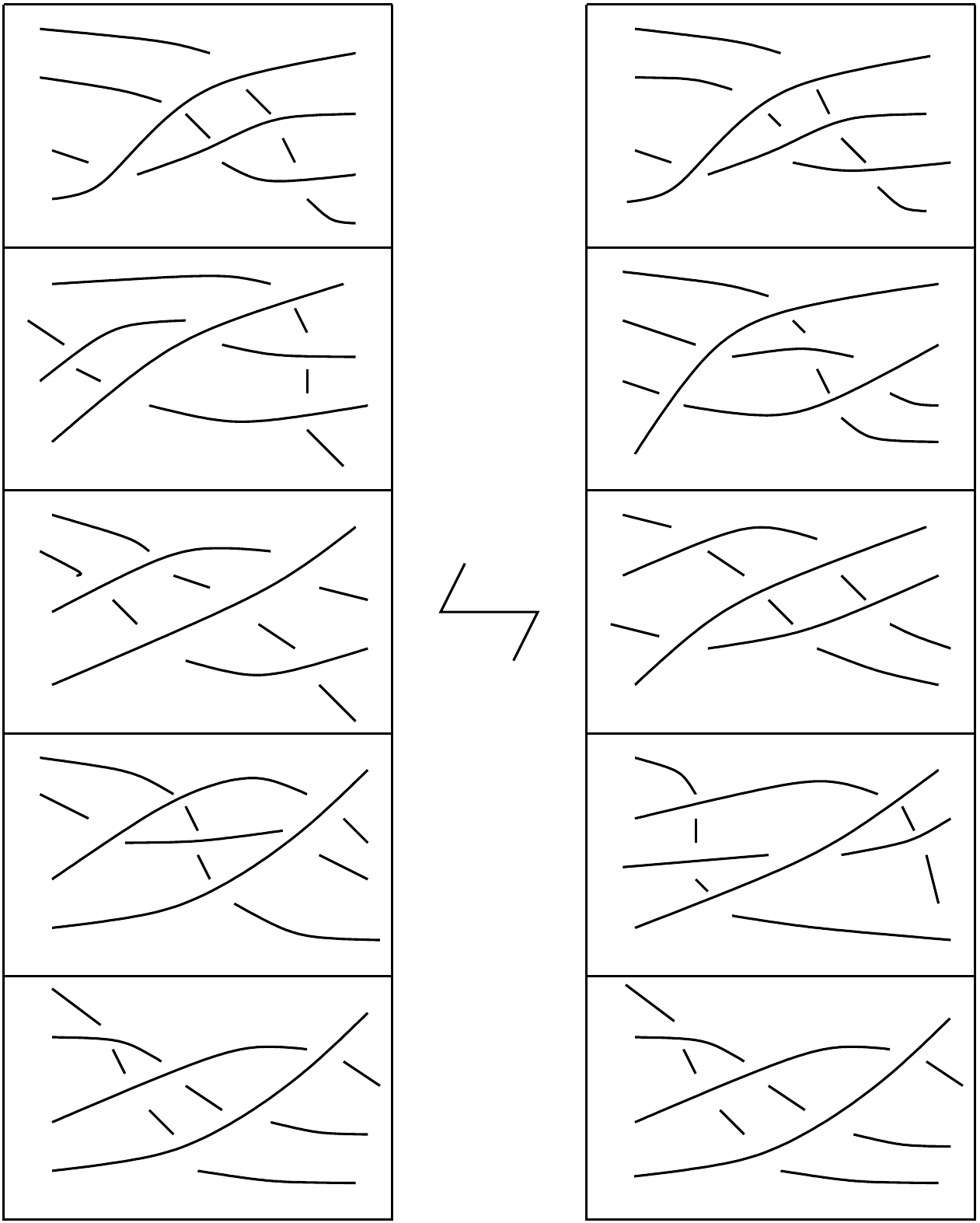}}
\]
\caption{Movie parametrizations of the Roseman moves}\label{fig:Roseman}
\end{figure}

\begin{figure}[ht]
\[\put(27, 70){\fontsize{8}{8}$\text{MM6}$} 
\put(117, 70){\fontsize{8}{8}$\text{MM7}$}
\put(210, 70){\fontsize{8}{8}$\text{MM9}$} 
\put(300, 70){\fontsize{8}{8}$\text{MM11}$} 
\raisebox{-5pt}{\includegraphics[height=1in, width = 1in]{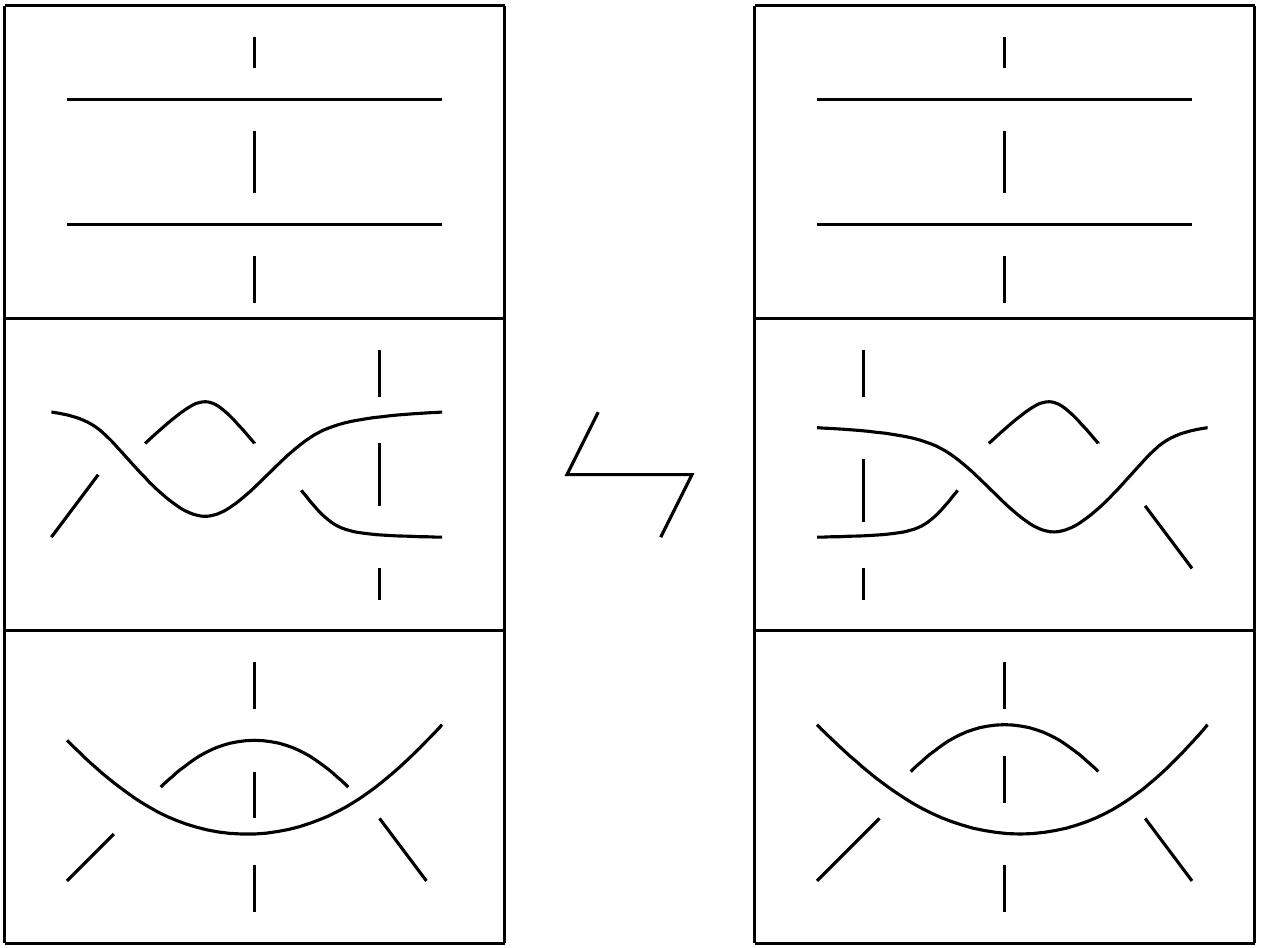}} \hspace{0.7cm} 
\raisebox{-5pt}{\includegraphics[height=1in, width = 1in]{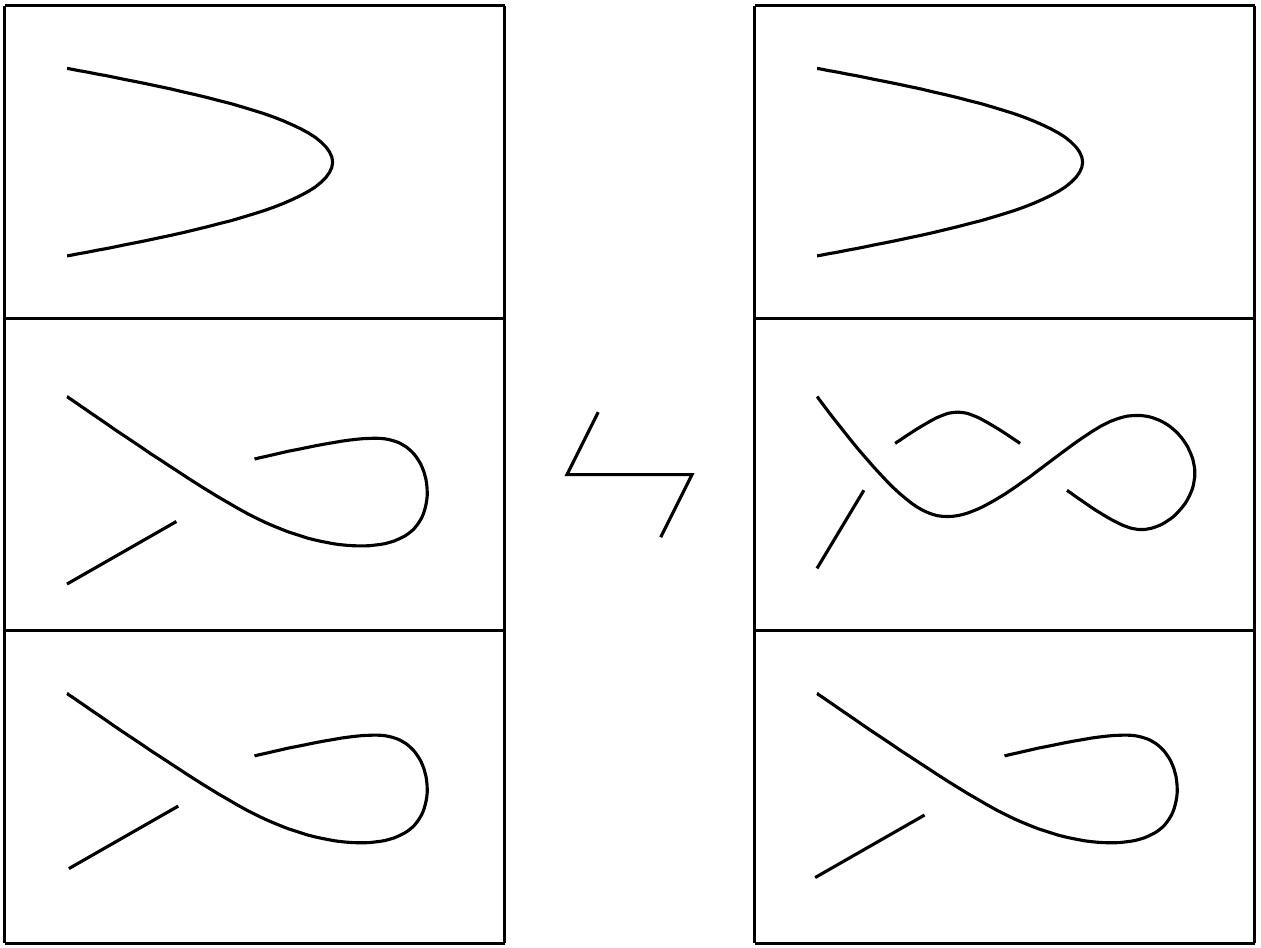}}  \hspace{0.7cm} 
\raisebox{-5pt}{\includegraphics[height=1in, width = 1in]{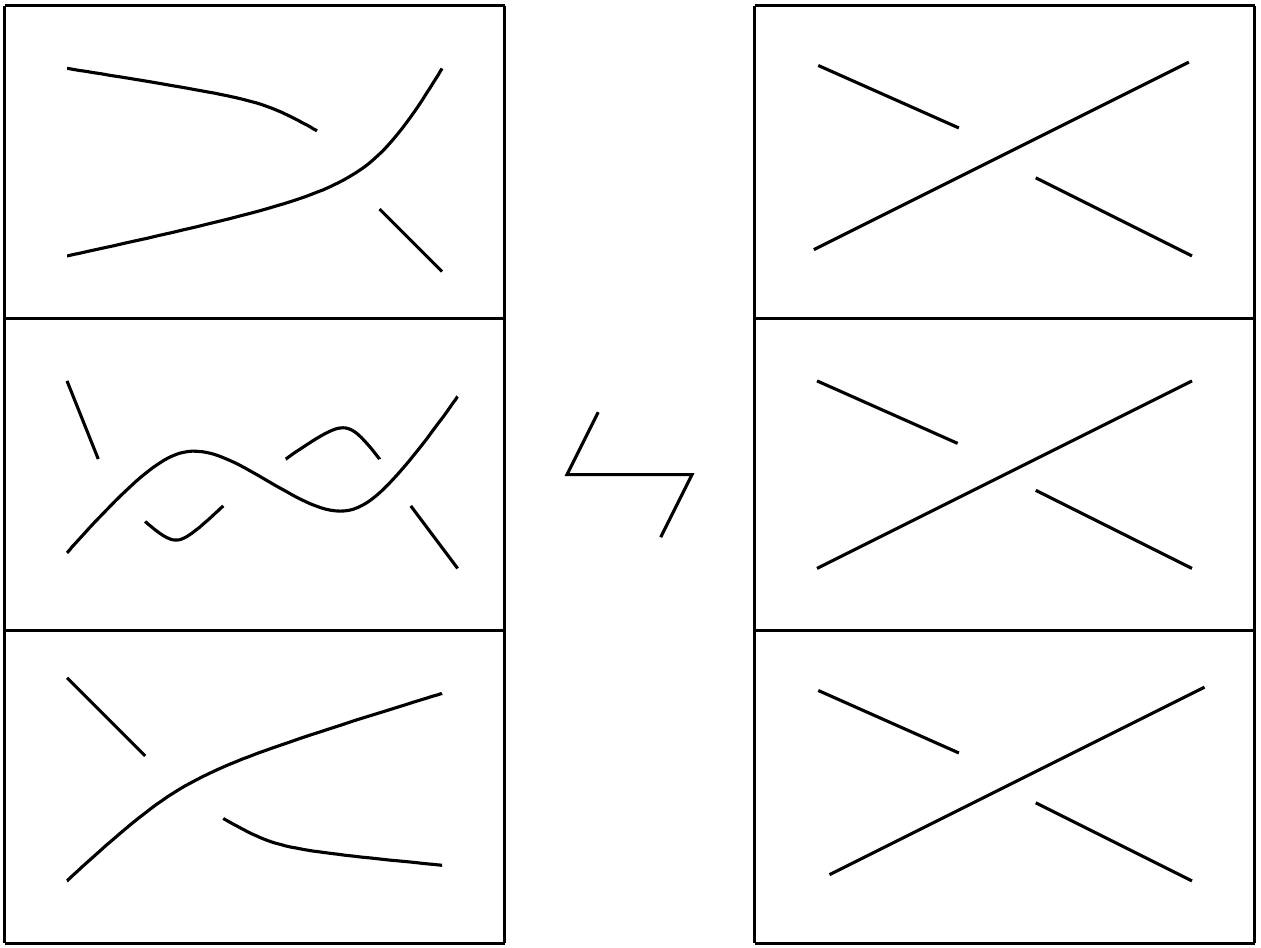}} \hspace{0.7cm}
\raisebox{-5pt}{\includegraphics[height=1in, width = 1in]{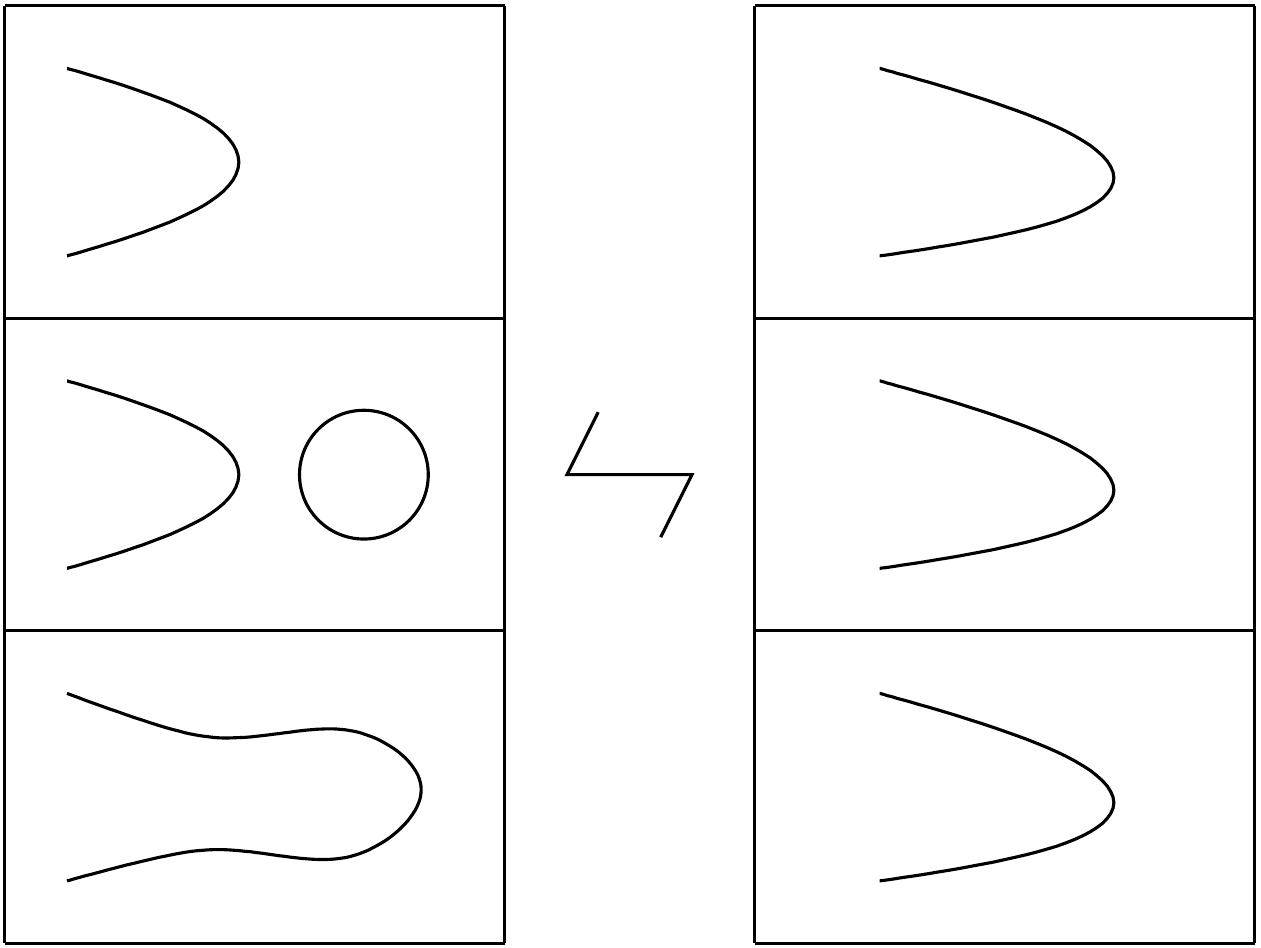}}\]

\[\put(24, -63){\fontsize{8}{8}$\text{MM12}$} 
\put(114, -63){\fontsize{8}{8}$\text{MM13}$}
\put(205, -63){\fontsize{8}{8}$\text{MM14}$}
\put(295, -63){\fontsize{8}{8}$\text{MM15}$}
\raisebox{-55pt}{\includegraphics[height=1.15in]{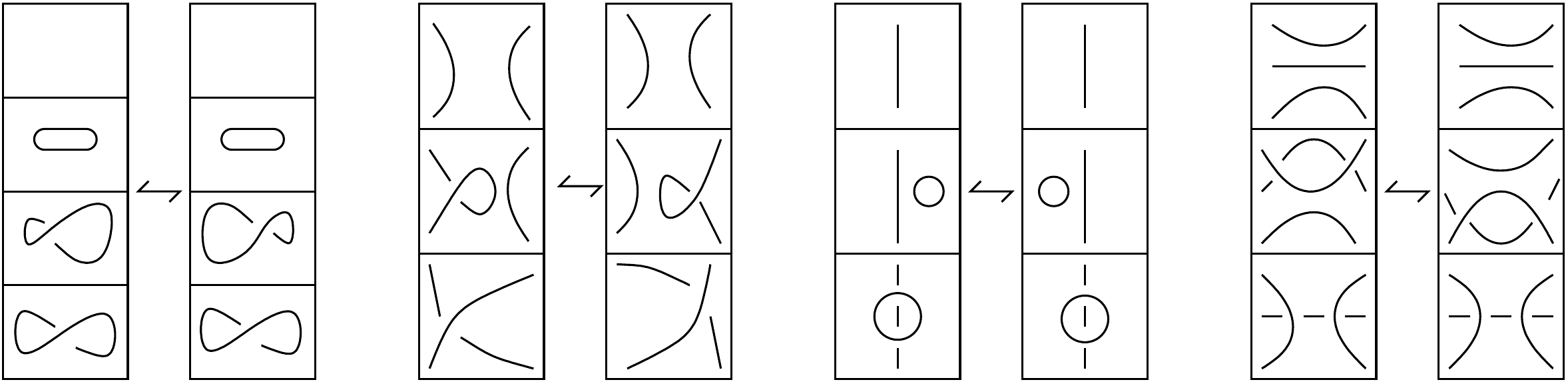}}\]
\caption{Carter-Saito augmented set of movie moves}\label{fig:Carter-Saito}
\end{figure}

Our additional set of movie moves for singular link cobordisms is given in Figure~\ref{fig:new-reidclips}. We have adopted the standard convention of listing movie moves with only one possible set of crossing data. Nonetheless, it is an easy exercise to obtain all versions of these movie moves.

\begin{figure}[ht]
\[\put(35, 60){\fontsize{8}{8}$\text{MM16}$}
\put(155, 60){\fontsize{8}{8}$\text{MM17}$}
\put(273, 60){\fontsize{8}{8}$\text{MM19}$}
\raisebox{-15pt}{\includegraphics[height=1in]{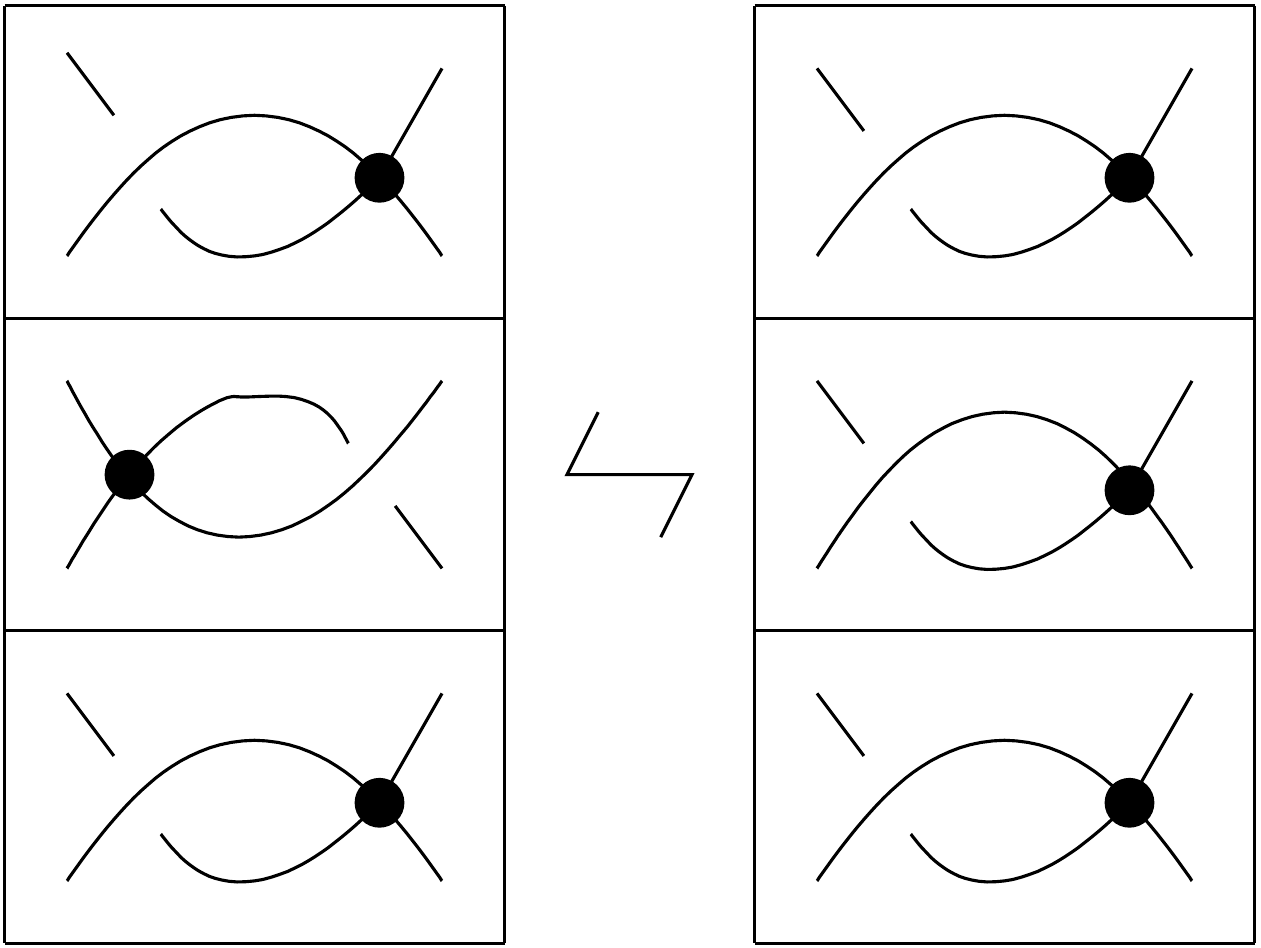}} \hspace{0.8cm} 
\raisebox{-15pt}{\includegraphics[height=1in]{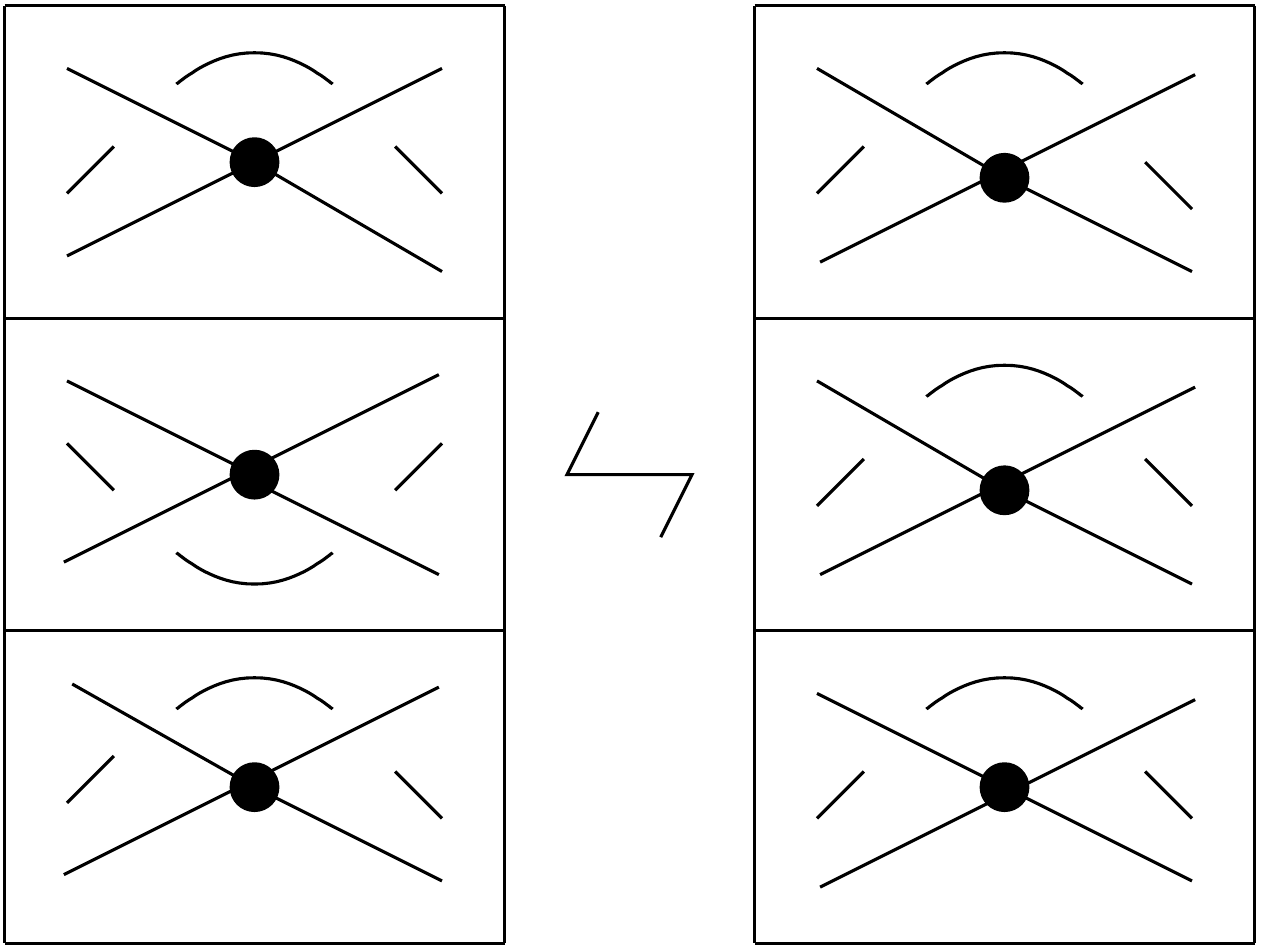}} \hspace{0.8cm} 
\raisebox{-15pt}{\includegraphics[height=1in]{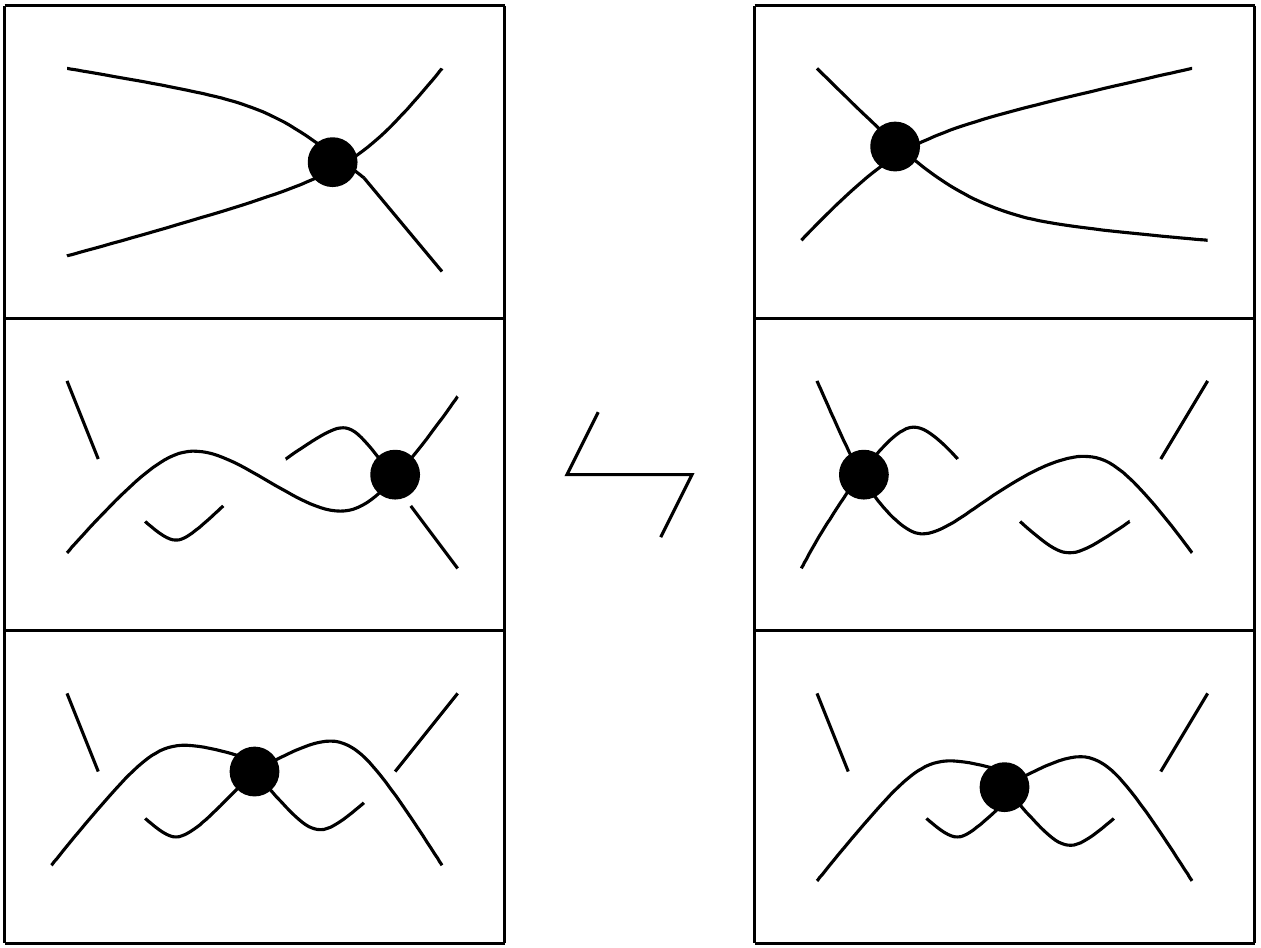}} \]

\[
\raisebox{-15pt}{\includegraphics[height=1.3in]{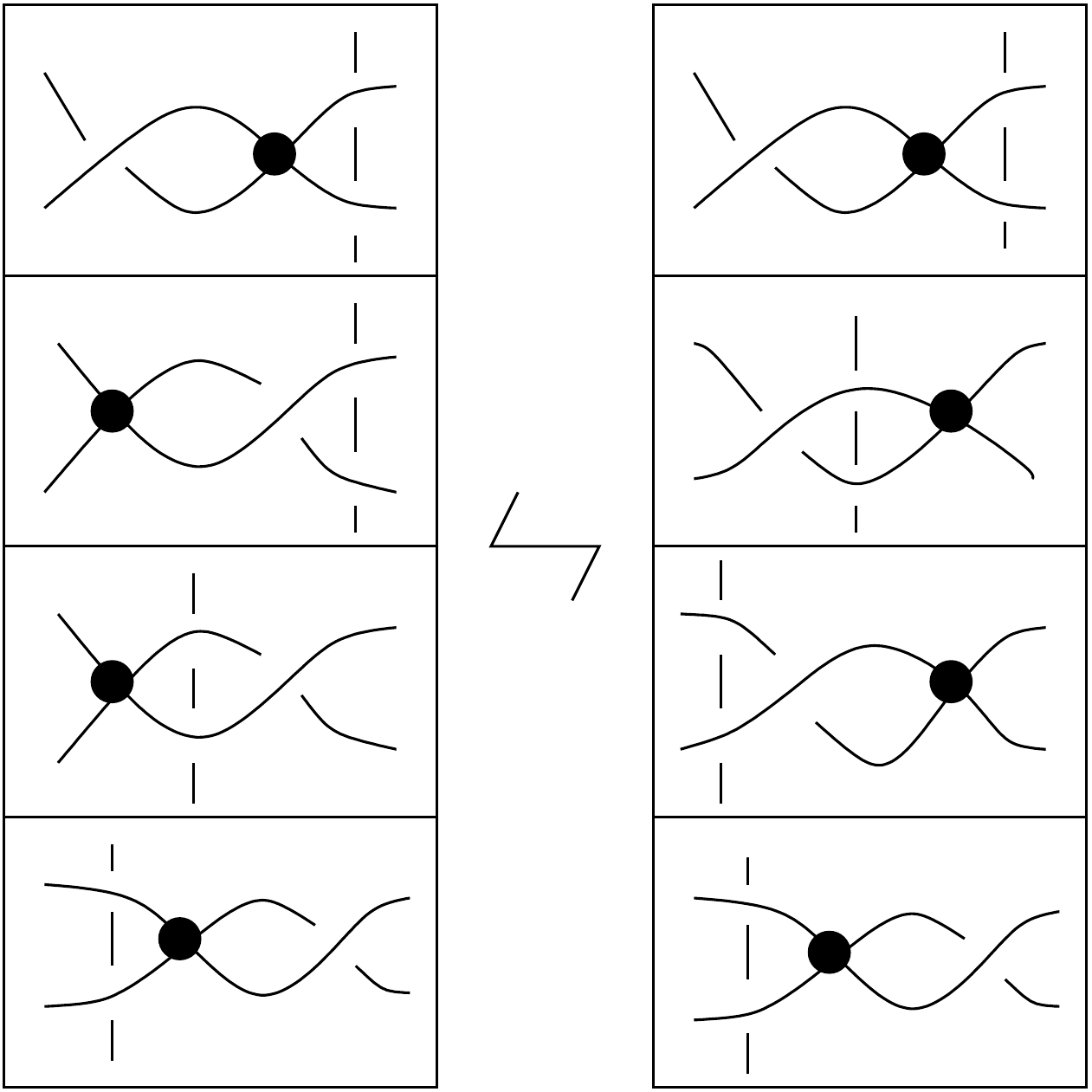}} \hspace{1.5cm}
\raisebox{-15pt}{\includegraphics[height=1in]{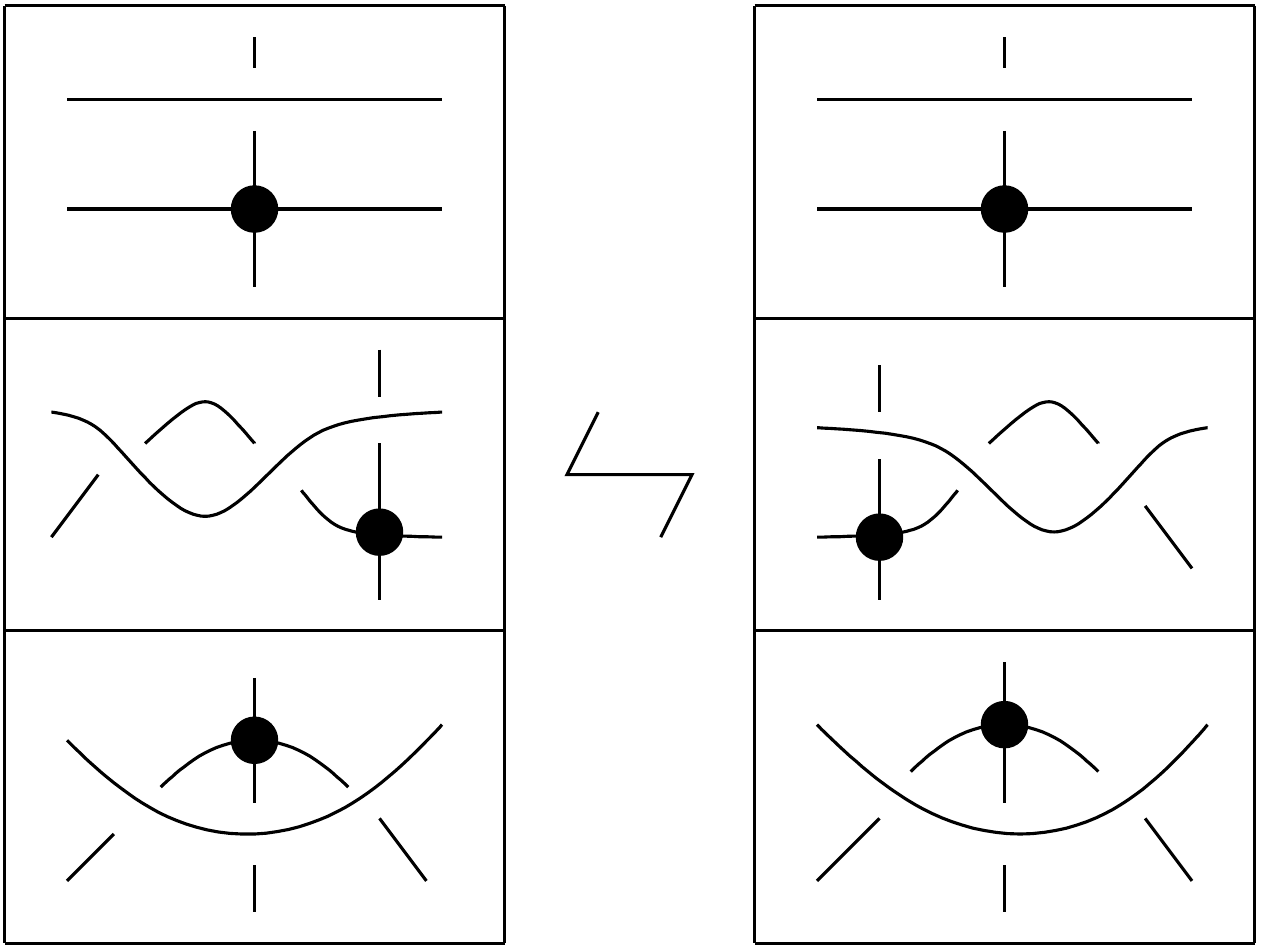}} 
\put(-200, -23){\fontsize{8}{8}$\text{MM18}$} 
\put(-60, -23){\fontsize{8}{8}$\text{MM20}$}
 \]

\[\hspace{-0.5cm}\raisebox{-15pt}{\includegraphics[width = 1.4in, angle=90]{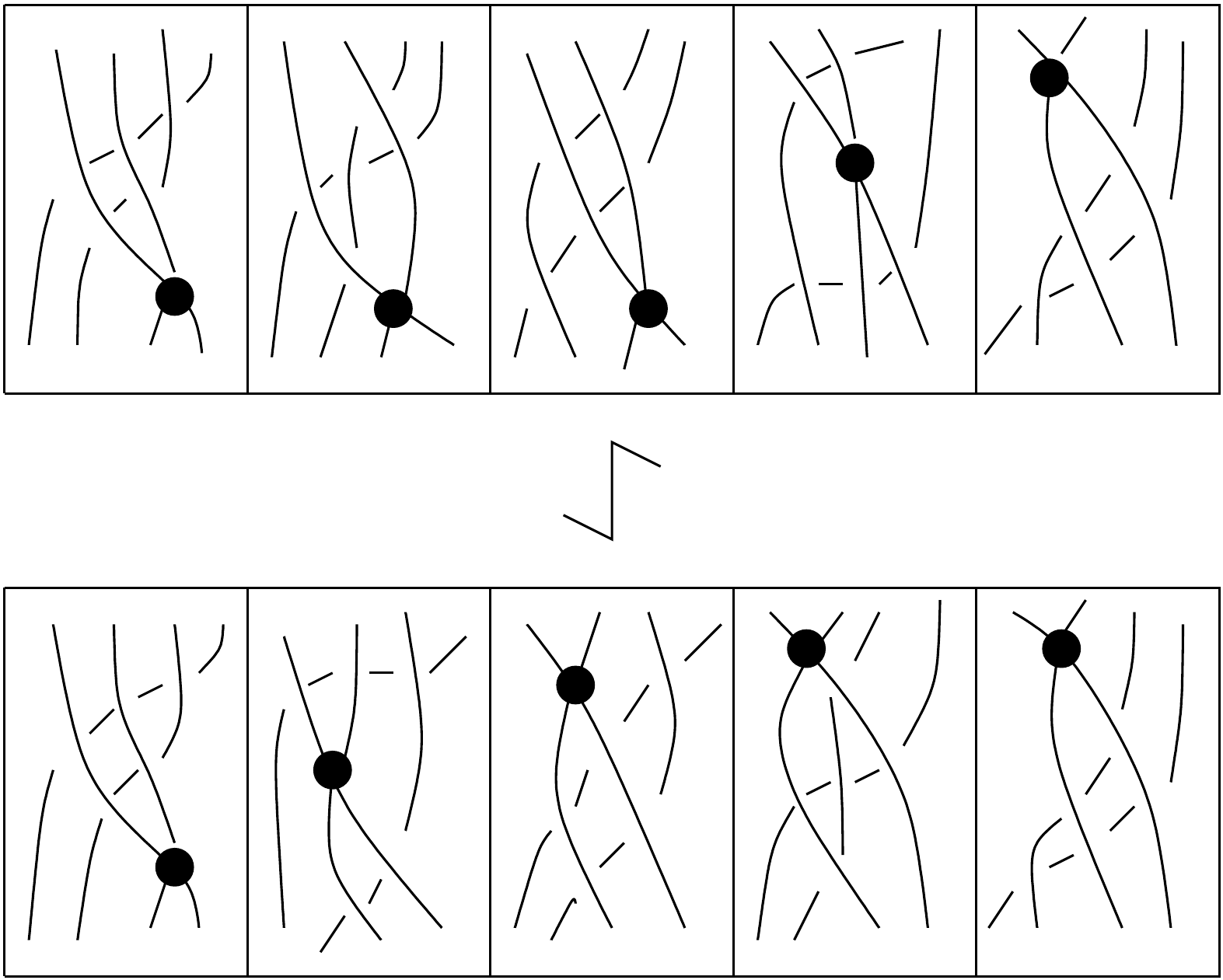}} 
\hspace{1.5cm} \raisebox{-15pt}{\includegraphics[width=1.4in, angle=90]{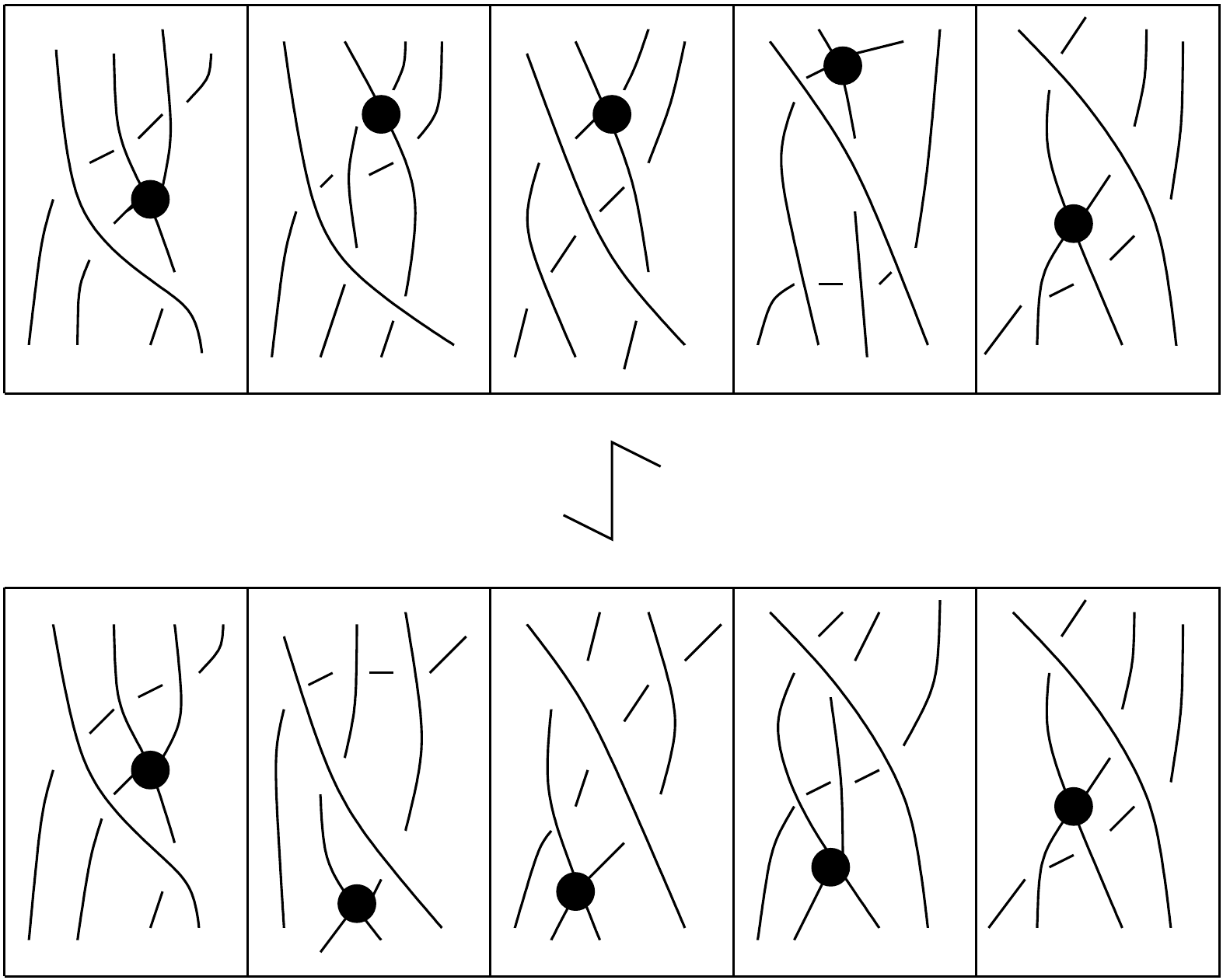}} 
\put(-243, -23){\fontsize{8}{8}$\text{MM21}$}
\put(-75, -23){\fontsize{8}{8}$\text{MM22}$}
\]

\[ \hspace{1.5cm} \raisebox{-15pt}{\includegraphics[width=1.4in, angle = 90]{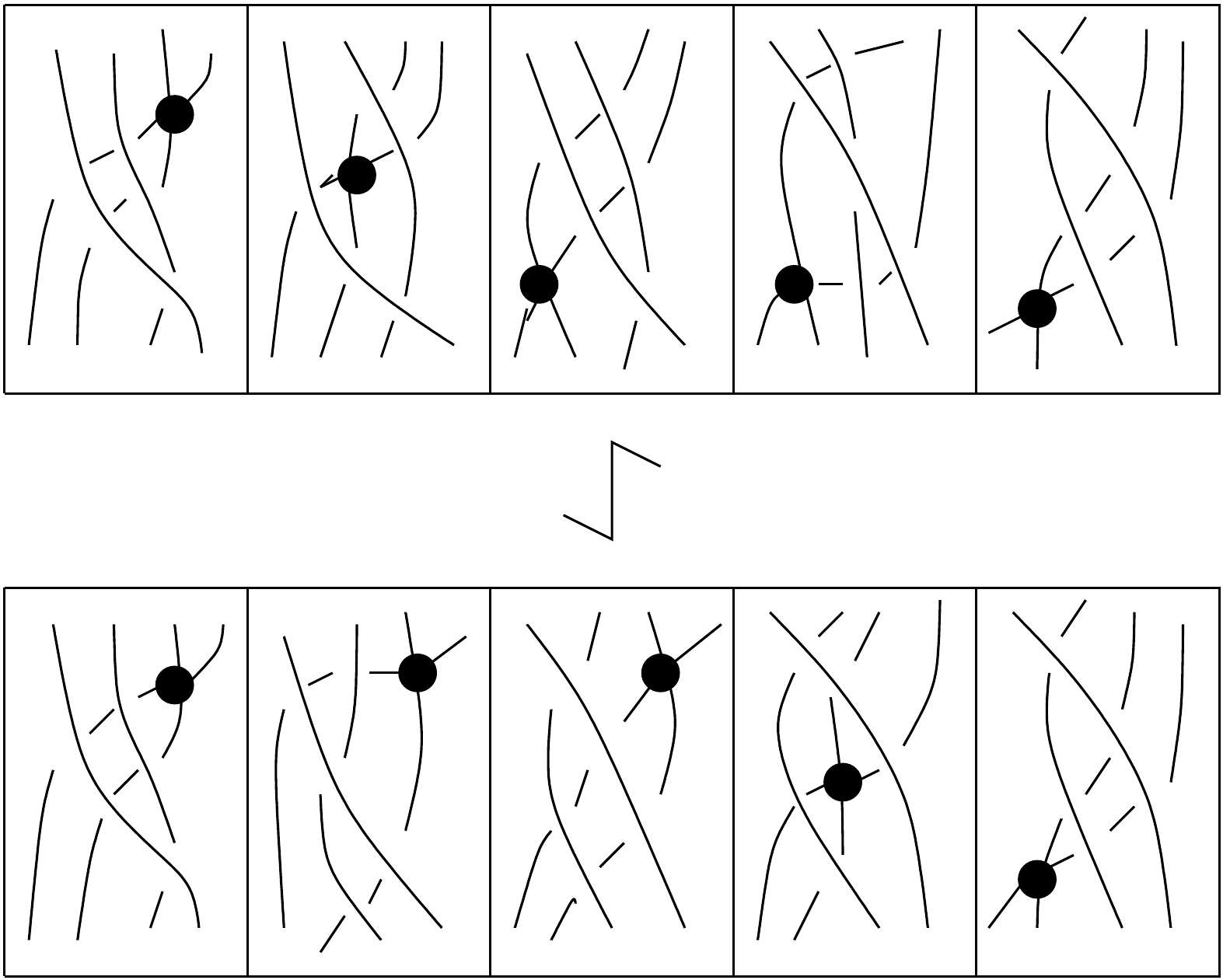}} \hspace{1.5cm}
\raisebox{-15pt}{\includegraphics[width=1.4in, angle=90]{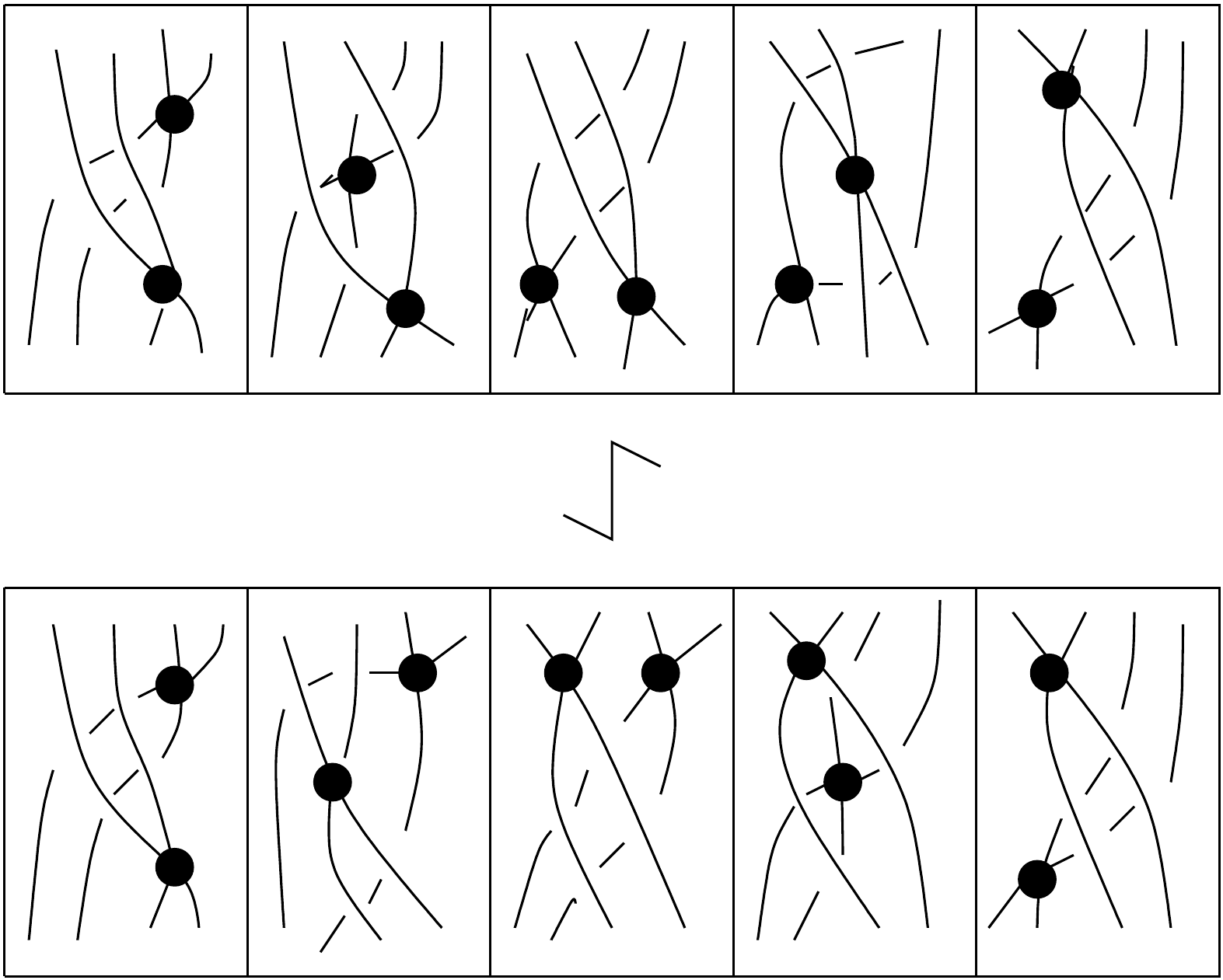}}
\put(-243, -23){\fontsize{8}{8}$\text{MM23}$}
\put(-75, -23){\fontsize{8}{8}$\text{MM24}$}
\]
\caption{Additional movie moves for singular link cobordisms}\label{fig:new-reidclips}
\end{figure}

It is important to note that in our Movie-Move Theorem (Theorem~\ref{thm:movie-moves}) we do not fix a height function on each still of a movie.

The idea of the proof for Theorem~\ref{thm:movie-moves} is to analyze an isotopy of a singular link cobordism as a finite sequence of changes, where each change is one of the finitely many possible changes. We want to ensure that no two changes occur simultaneously, which is warranted by a small perturbation of the isotopy. 

In showing that the list of our movie moves is sufficient to relate diagrams of isotopic singular link cobordisms, we will use a similar approach as in our sketch of the sufficiency of the extended Reidemeister moves to relate diagrams of isotopic singular links. We will study the projection of an isotopy on the retinal plane. As the isotopy passes through a singular projection, the types of possible changes involve fold sets and critical points of the multiple point strata including transverse intersections (in the direction of the isotopy).

Each of the movie moves will correspond to:
\begin{enumerate}
\item[(I)] non-degenerate critical points of the multiple point strata (there are seven such movie moves)--where the isotopy direction provides a height function, or to  
\item[(II)] transverse intersections between two strata (there are seven such movie moves), or to 
\item[(III)] changes in folds and in the relative position of the multiple point strata and folds (there are ten such movie moves). 
\end{enumerate}

We remark that the movie moves of types (I) and (II) above are movie descriptions of the Roseman-type moves for diagrams of isotopic singular link cobordisms. In addition, the movie moves of type (I) correspond to invertibility of each of the extended Reidemeister moves for singular link diagrams. Also, the movies moves of type (III) do not affect the topology of the image.

The 0-dimensional singular points on a generic diagram of a singular link cobordism are: 
\begin{enumerate}
\item[(1.a)] the branch points and crossing-vertex points that result from an $R1$ and $R5$ move, respectively,
\item[(1.b)] triple points and singular intersection points that result from an $R3$ and $R4$ move, respectively.
\end{enumerate}

The 1-dimensional singular sets of a generic diagram of a singular link cobordism consist of: 
\begin{enumerate}
\item[(2.a)] the double-point arcs produced by self-intersections of the singular link cobordism, or
\item[(2.b)] the edge set of the cobordism.
\end{enumerate}
Any facet of a generic diagram is 2-dimensional and therefore, it is non-singular.

During an isotopy, a branch point traces a 1-dimensional set, which we call a \textit{branch-point set}. Similarly, a crossing-vertex point, triple point, and singular intersection point trace 1-dimensional sets during an isotopy, and we refer to these sets as a \textit{crossing-vertex set, triple-point set, and singular intersection set}, respectively. In addition, the 1-dimensional singular sets of a generic diagram evolve in space-time to 2-dimensional sets. To find the analogue of the Roseman moves for singular link cobordisms we need to examine (I) the critical points of the resulting 1-dimensional and 2-dimensional sets, and (II) transverse intersections between two strata. Having fixed a height function on the 3-space into which the cobordism is projected, we need to examine additional changes that  involve folds and cusps (giving rise to the movies moves of type (III)), where these changes do not alter the topology of the projection. 

It is worth noting the analogy between our approach to isotopies of singular link cobordisms and the discussion in Section~\ref{sec:sing-links} about isotopies of singular links.

\subsection{Proof of Theorem~\ref{thm:movie-moves}} In this section we prove our Movie-Move Theorem. Here, when we illustrate a movie move we also depict its corresponding move on diagrams for singular link cobordisms.

Let $D_0$ and $D_1$ be generic diagrams (projections) in $\R^2 \times [0, 1]$ of singular link cobordisms, each given as a sequence of ESSIs. It is easy to see that if $D_0$ and $D_1$ differ by any of the movie moves depicted in Figures~\ref{fig:Roseman}, \ref{fig:Carter-Saito}, and \ref{fig:new-reidclips}, or by interchanges of the levels of distant critical points, then they correspond to isotopic cobordisms. 

Now suppose that $D_0$ and $D_1$ represent isotopic singular link cobordisms. We need to show that $D_1$ is obtained from $D_0$ by a finite sequence of local moves taken from those illustrated in Figures~\ref{fig:Roseman}, \ref{fig:Carter-Saito}, and \ref{fig:new-reidclips}, or by exchanging the order in which ESSIs occur when they occur in disjoint neighborhoods.

The isotopy between $D_0$ and $D_1$ provides a map $\phi \co C \times I \to (\R^2 \times [0, 1]) \times I$, where $I = [0, 1]$ and we need to examine the singularities of this map. We may assume (by allowing the isotopy to be slightly modified, if necessary) that this map has generic singularities on the singular sets of the projection, and that  these singularities lie at different time coordinates, $t \in I$. In additions, by compactness, there will be finitely many singularities. Considering the projection map $q \co (\R^2 \times [0, 1]) \times I \to I$, we may assume, without loss of generality, that $q$ is Morse on $\phi(C \times [0,1])$; in particular, we may assume that the critical points for the singular strata of the isotopy are non-degenerate and that the intersections (in the 4-dimensional space) between strata of the isotopy are generic and transverse.

(I) We start by analyzing the generic critical points for the 1-dimensional and 2-dimensional singular sets of the isotopy. 
The 1-dimensional singular sets (of type (1.a) and (1.b)) are the images under the isotopy of the branch points, crossing-vertex points, triple points, and singular intersection points of the diagram $D_0$.

The Reidemeister move $R1$ has non-symmetric sides and, consequently, there are two types of movie moves corresponding to the non-degenerate critical points of the branch-point sets in the isotopy direction. The movie move $MM1$ corresponds to a minimum (or maximum) for the brach-point set which is a minimum (or maximum) for the 2-dimensional set traced by the double-point arc; the pair of branch points that are created/annihilated during the move $MM1$ are the boundary points of the above mentioned double-point arc (self-intersection arc).
On the other hand, the move $MM2$ corresponds to a minimum (or maximum) for the branch-point set which is a maximum (or minimum) for the 2-dimensional set traced by the double-point arc.
\[ \raisebox{5pt}{\includegraphics[height=0.6in]{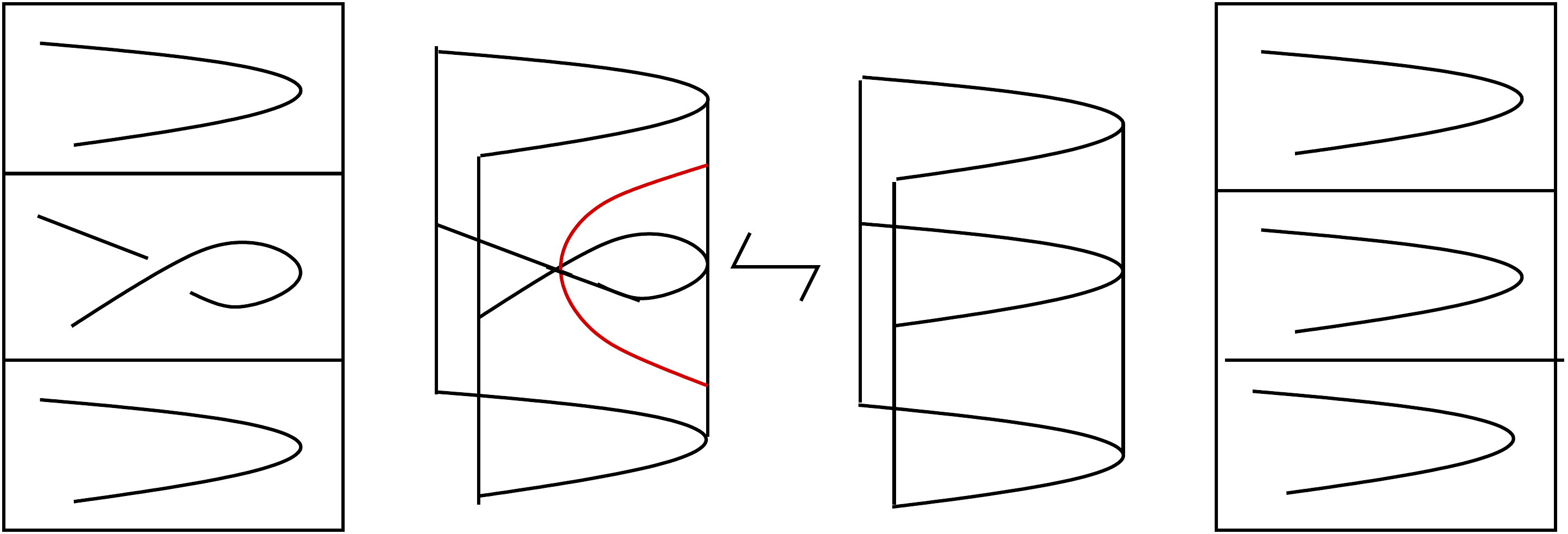}} \hspace{2cm} \raisebox{5pt}{\includegraphics[height=0.6in]{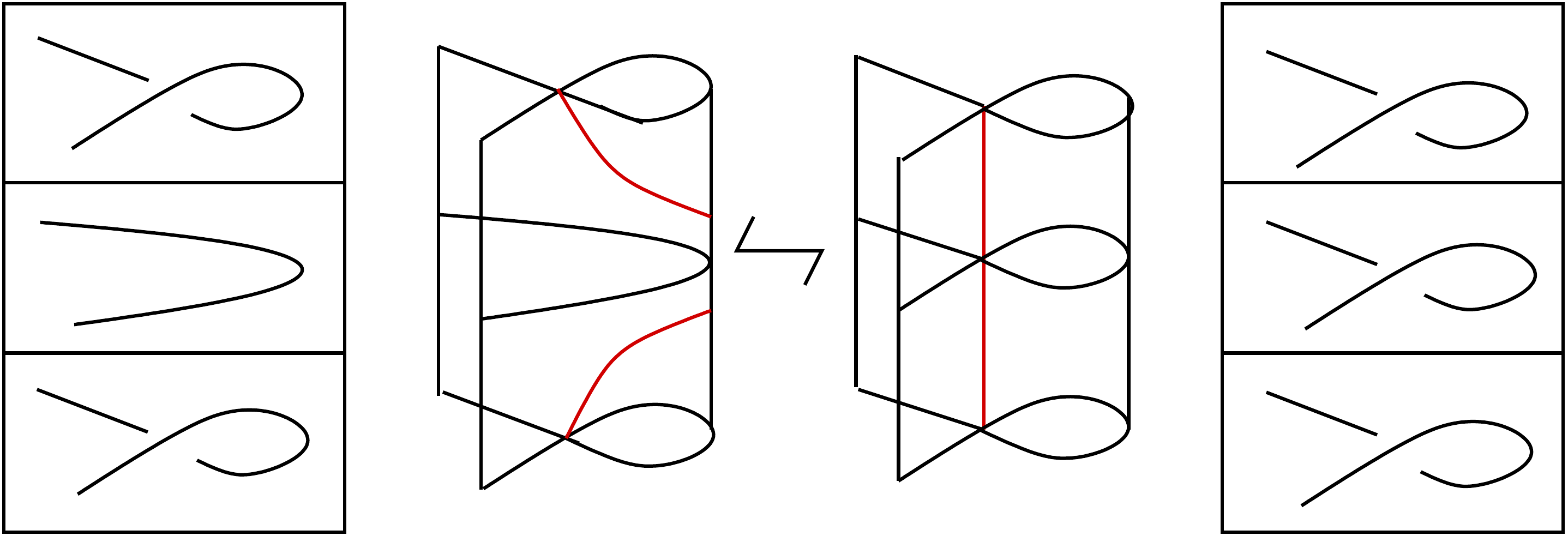}}  
\put(-333, 25){\fontsize{8}{8}$\text{MM1}$} 
\put(-153, 25){\fontsize{8}{8}$\text{MM2}$} \]

The twisted crossing-vertex move $R5$ has symmetric sides, and therefore there is only one movie move corresponding to the invertible behavior of this Reidemeister-type move, namely the movie move $MM16$.  This movie move corresponds to a non-degenerate critical point of the 1-dimensional crossing-vertex point set in the isotopy direction. In particular, the move corresponds to the annihilation/creation of a pair of crossing-vertex points.
\[ \raisebox{5pt}{\includegraphics[height=0.65in]{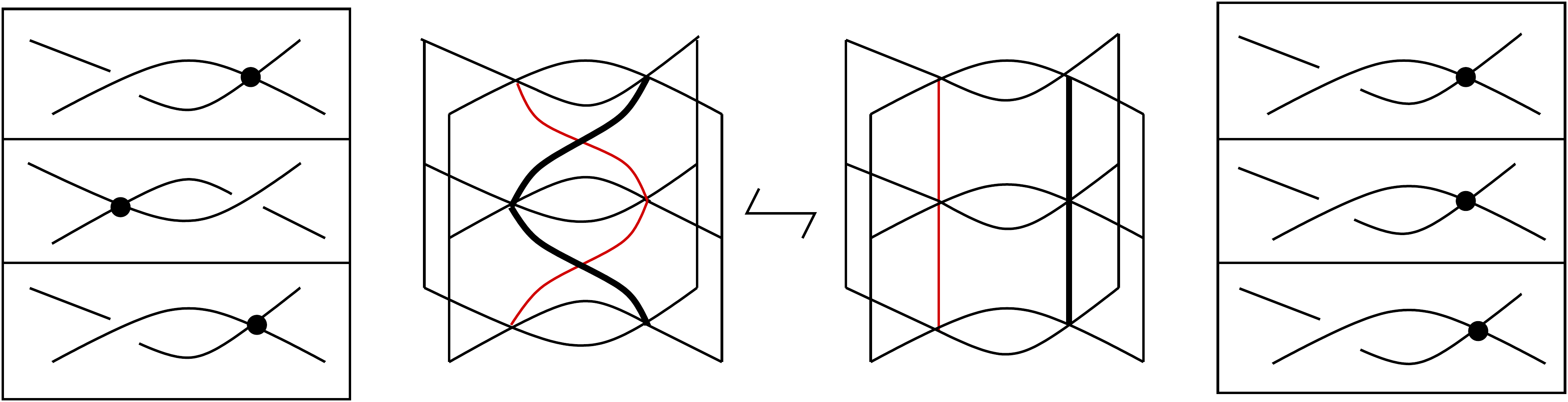}}
\put(-225, 28){\fontsize{8}{8}$\text{MM16}$} \]

The movie moves described so far, $MM1, MM2$ and $MM16$, correspond to critical points for the singular sets of type (1.a). 

We examine now the critical points for the singular sets of type (1.b). There is an optimum (maximum or minimum) point on a triple-point set which corresponds to the annihilation or creation of a pair of triple points during an isotopy. The movie move reflecting this behavior is the move $MM5$ showed below, together with the corresponding move on diagrams:

\[ \raisebox{5pt}{\includegraphics[height=0.8in]{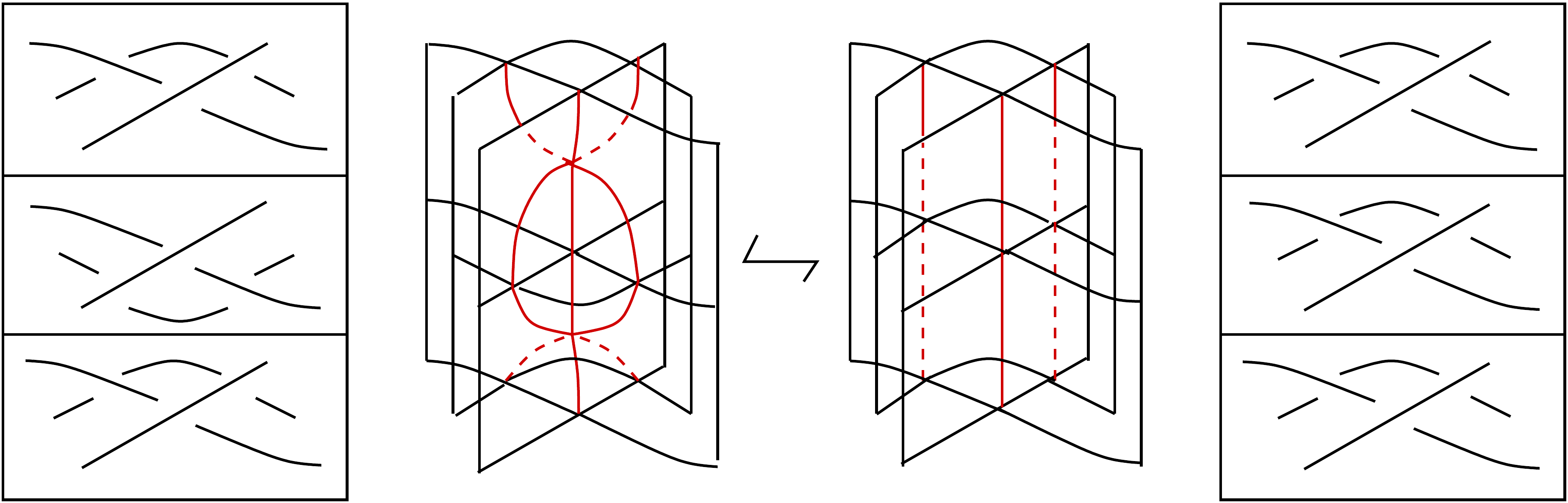}} 
\put(-205, 28){\fontsize{8}{8}$\text{MM5}$}\]

The extended Reidemeister move $R4$ for singular link diagrams is similar to the Reidemeister move $R3$, and the movie move $MM17$ reflects the invertibility property of the move $R4$. A non-degenerate critical point in the isotopy direction creates or annihilates a pair of singular intersection points:

\[ \raisebox{5pt}{\includegraphics[height=0.8in]{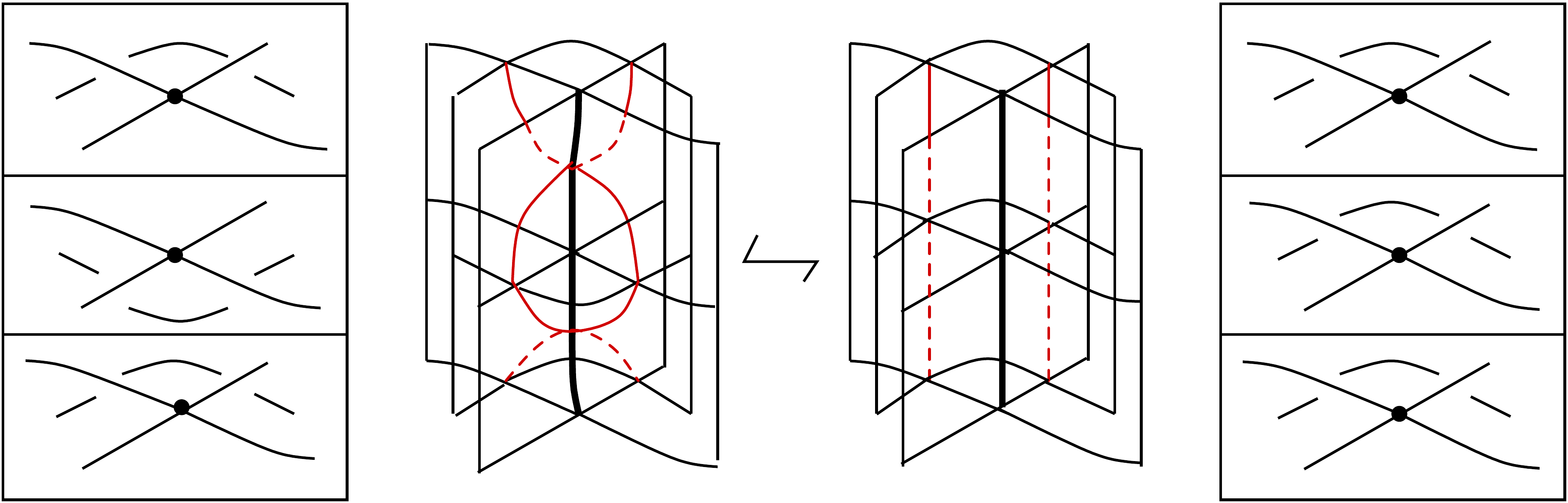}}
\put(-207, 28){\fontsize{8}{8}$\text{MM17}$}
 \]

We proceed now to analyze the 2-dimensional singular sets of the isotopy. These are the traces of the double-point arcs in (2.a) and edges in (2.b) of the diagram $D_0$. In examining the critical points of these 2-dimensional sets, we observe first that the edge set of $D_0$ remains unchanged during the isotopy, and thus there are no critical points for the edge set in (2.b). Therefore, we only need to consider the critical points corresponding to the 2-dimensional sets of type (2.a). The non-degenerate critical points for a surface are maximum/minimum and saddle points, and they account for the movie moves $MM3$ and $MM4$ depicted below along with their corresponding moves for diagrams (these are the \textit{Roseman bubble and saddle moves}; see~\cite{CS, Ca, Ros}):
\[ \raisebox{5pt}{\includegraphics[height=0.6in]{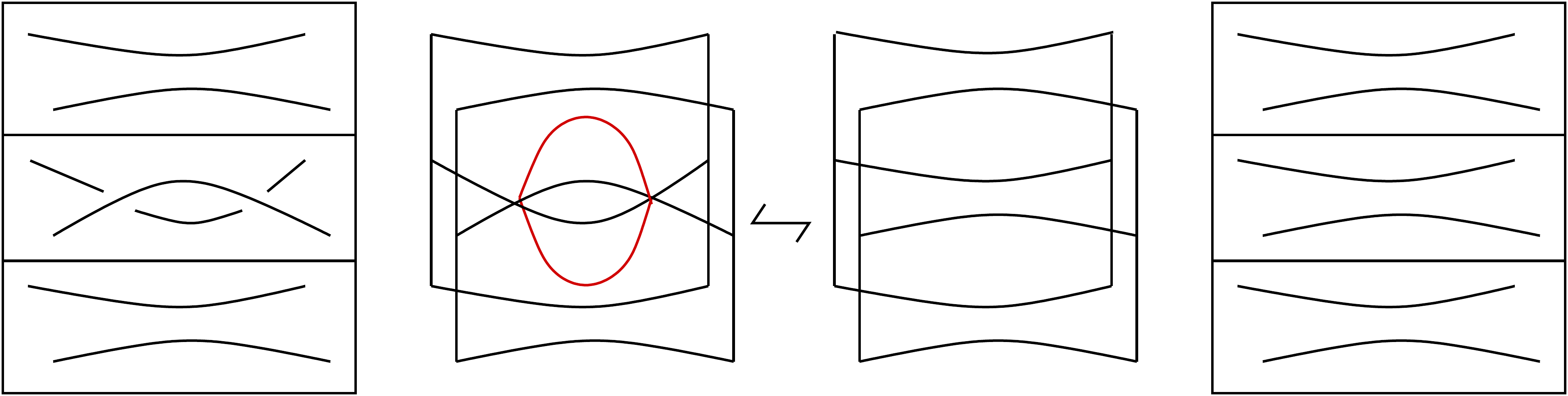}}
\put(-195, 25){\fontsize{8}{8}$\text{MM3}$} \]
\[ \raisebox{5pt}{\includegraphics[height=0.6in]{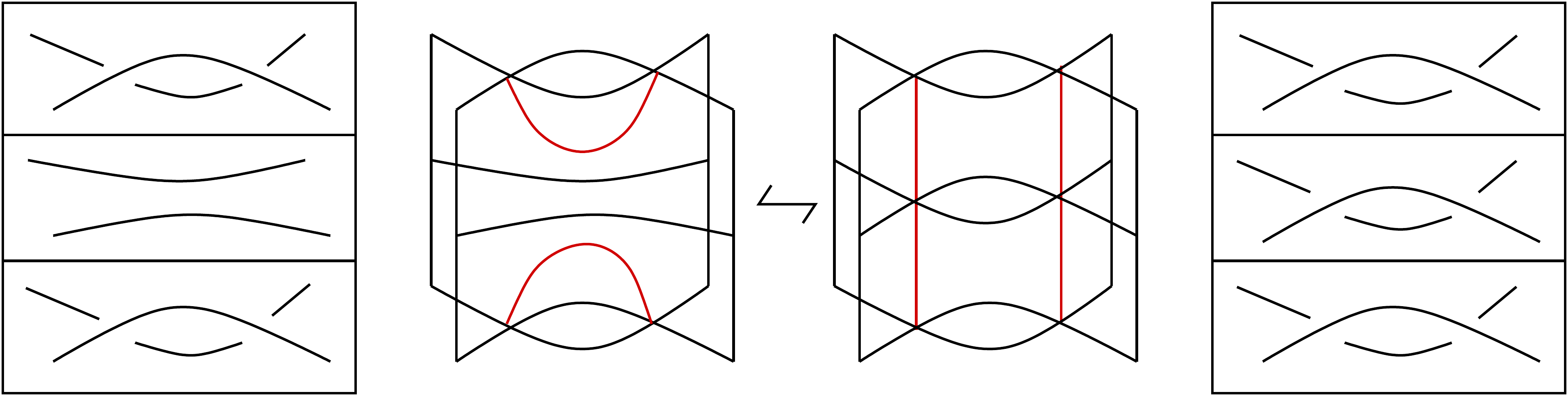}}
\put(-195, 25){\fontsize{8}{8}$\text{MM4}$} \]

(II) The second type of singularities to be considered deal with transverse intersections in the 4-dimensional space-time (the 3-space of the projection times the isotopy direction). A transverse intersection in 4-space occurs between a 1-dimensional arc and a 3-dimensional solid, or between two 2-dimensional sets. We refer to these  intersections as being of type (3-1) and (2-2), respectively.

The type of (3-1) intersections involve arcs formed in space-time by the singular points of type (1.a) and (1.b) and we need to examine their transverse intersections with embedded sheets. The resulting intersections are classified as follows:
\begin{enumerate}
\item[(3-1.B)] A branch point passing through a transverse sheet. In the isotopy direction this corresponds to the transverse intersection between a branch point set and a 3-dimensional set (the  transverse sheet times the isotopy parameter). This gives rise to the movie move $MM8$ depicted below along with the associated move on diagrams (this is one of the seven Roseman moves~\cite{Ros}):

\[ \put(36, 123){\fontsize{8}{8}$\text{MM8}$}
\raisebox{5pt}{\includegraphics[height=1.6in]{MM8-twosides}} \hspace{1cm} \raisebox{25pt}{\includegraphics[height=0.8in]{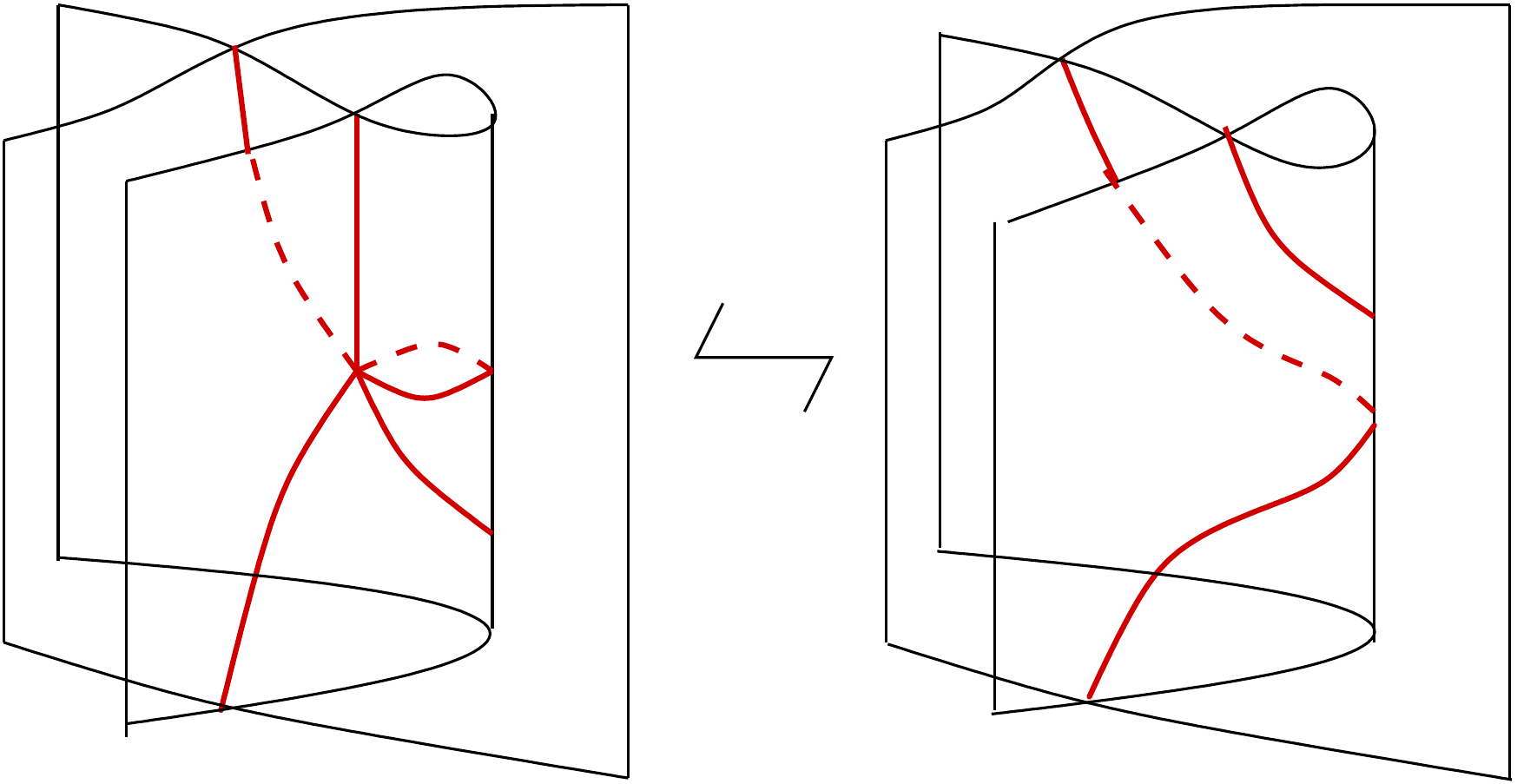}} 
\] 

\item[(3-1.CV)] A crossing-vertex point passing through a transverse sheet. This corresponds to an isolated point created as the transverse intersection between a crossing-vertex point set and a 3-dimensional set, and is represented by the movie move $MM18$:

 \[ \put(30, 90){\fontsize{8}{8}$\text{MM18}$}
\raisebox{0pt}{\includegraphics[height=1.2in]{MM18-twosides}} \hspace{1cm} \raisebox{0pt}{\includegraphics[height=1.1in]{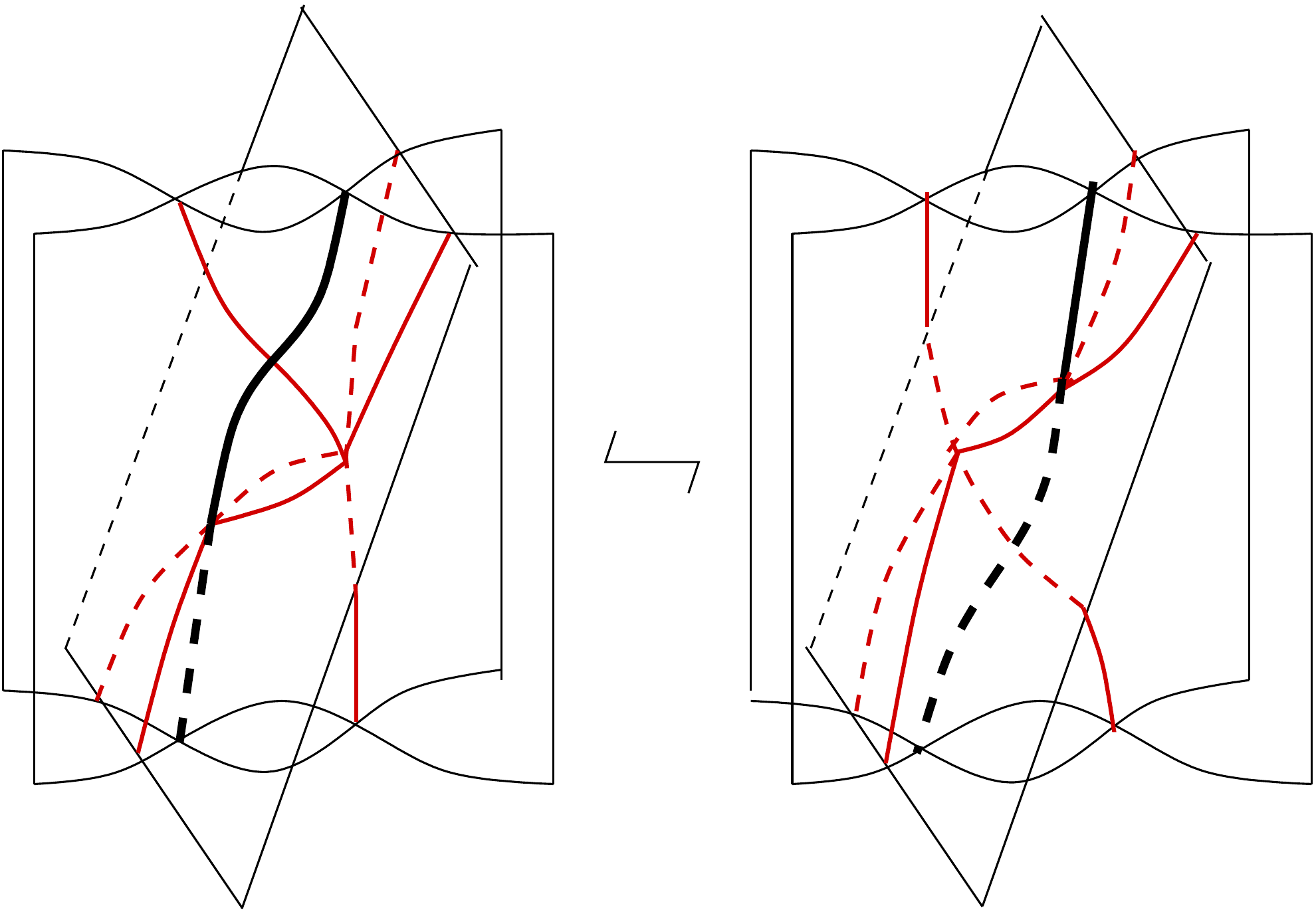}} 
\] 

\item[(3-1.T)] A triple point passing through a transverse sheet. This corresponds to the \textit{tetrahedral move} or \textit{quadruple point move}, and is one of the Roseman moves for isotopic knotted surfaces. The movie parametrization, $MM10$, for this move is as follows:

\[ \put(33, 121){\fontsize{8}{8}$\text{MM10}$}
\raisebox{5pt}{\includegraphics[width = 1.25in]{MM10-twosides}}  \hspace{1cm} \raisebox{15pt}{\includegraphics[height=1in]{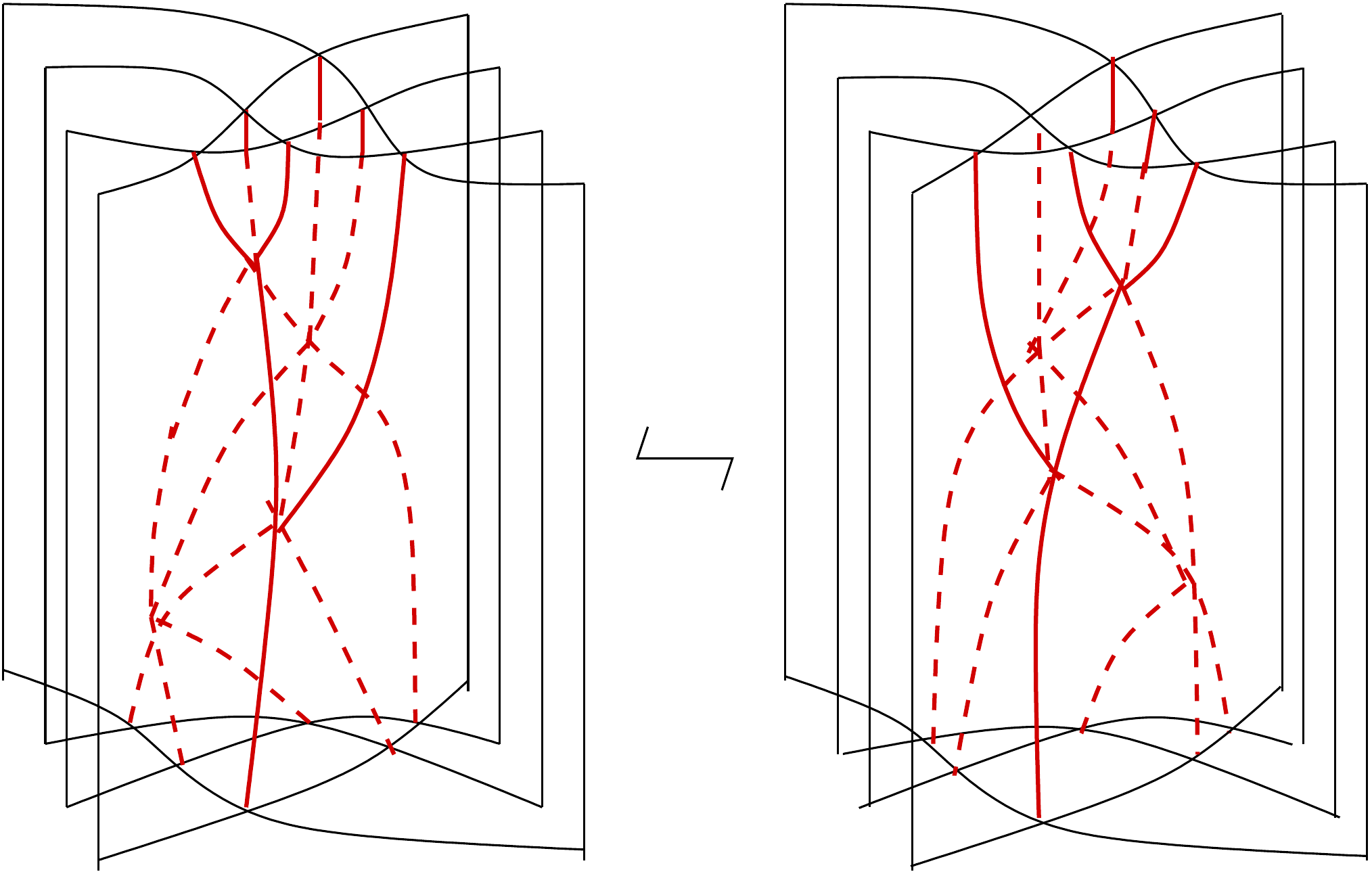}} 
\] 
Two consecutive stills in the movies representing the two sides of this movie move differ by a type-III Reidemeister move, which results in an isolated triple point. There are four possible triple points (formed by any three of the four strands involved in the movie move) and any of these points passes through a transverse sheet. The triple point evolves in space-time to become a 1-dimensional arc and the transverse sheet evolves into a 3-dimensional solid; their transverse intersection in 4-space is an isolated quadruple point. 

Note that there is another interpretation for the critical behavior of the movie move $MM10$, which will be discussed below in (2-2.DD). 

\item[(3-1.SI)] A singular intersection point passing through a transverse sheet. There are three movie moves that correspond to this type of transverse intersection, namely the moves $MM21, MM22$ and $MM23$. These moves are somewhat similar to the move $MM10$, and we refer to them as the \textit{IIX-move}, the \textit{IXI-move}, and the \textit{XII-move}, respectively. An edge $X \times [0, 1]$ can pass over, under, or between a pair of transverse sheets, and there are two ways to interpret the critical behavior of these moves. One way is to consider the intersection of an edge with one of the two sheets to obtain a singular intersection point (which evolves to be a 1-dimensional arc in the isotopy direction) that can pass through the other sheet (which traces a 3-dimensional solid in space-time). So, there is an isolated point as the transverse intersection between a 1-dimensional arc and a 3-dimensional solid in space-time.
\[ \put(33, 121){\fontsize{8}{8}$\text{MM21}$}
\raisebox{5pt}{\includegraphics[width = 1.25in]{MM21-twosides}}  \hspace{1cm} \raisebox{15pt}{\includegraphics[height=1in]{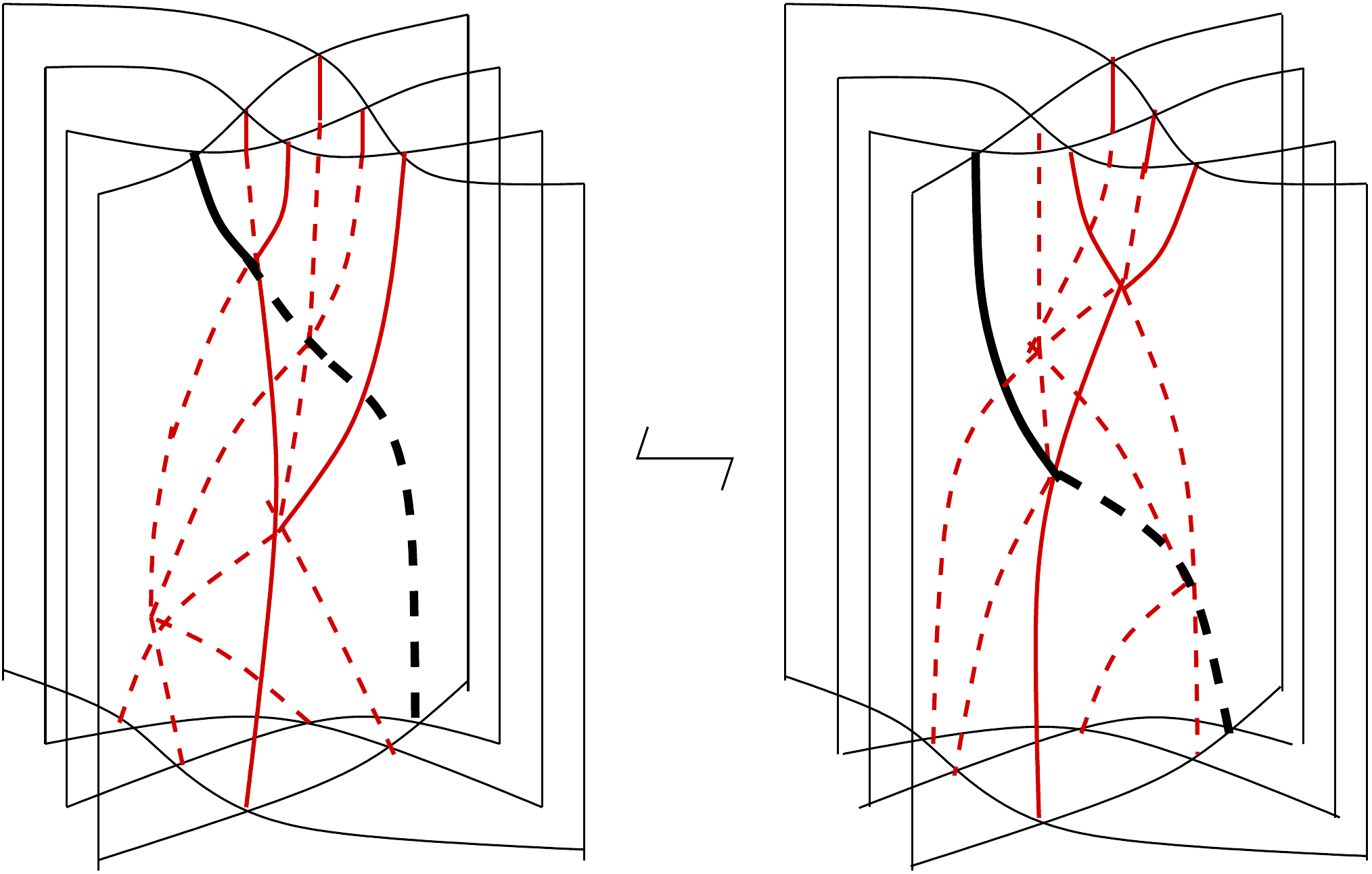}} 
\] 
\[\put(42, 90){\fontsize{8}{8}$\text{MM22}$}
\put(170, 90){\fontsize{8}{8}$\text{MM23}$}
\raisebox{0pt}{\includegraphics[width=1.2in, angle=90]{MM22-twosides}} \hspace{0.7cm}
\raisebox{0pt}{\includegraphics[width=1.2in, angle =90]{MM23-twosides}} \]
Above we only depicted the move on diagrams corresponding to the movie move $MM21$, since these three movie moves project to the same diagrams. Note also that the case (2-2.XD) given below provides another interpretation for these movie moves.
\end{enumerate}

\pagebreak

The type of (2-2) intersections are as follows:

\begin{enumerate}
\item[(2-2.XX)] The intersection between two edges of the cobordism. This critical behavior is represented by the movie move $MM24$ given below:
\[\put(33, 116){\fontsize{8}{8}$\text{MM24}$}
\raisebox{0pt}{\includegraphics[width=1.25in]{MM24-twosides}} \hspace{1cm} \raisebox{15pt}{\includegraphics[height=1in]{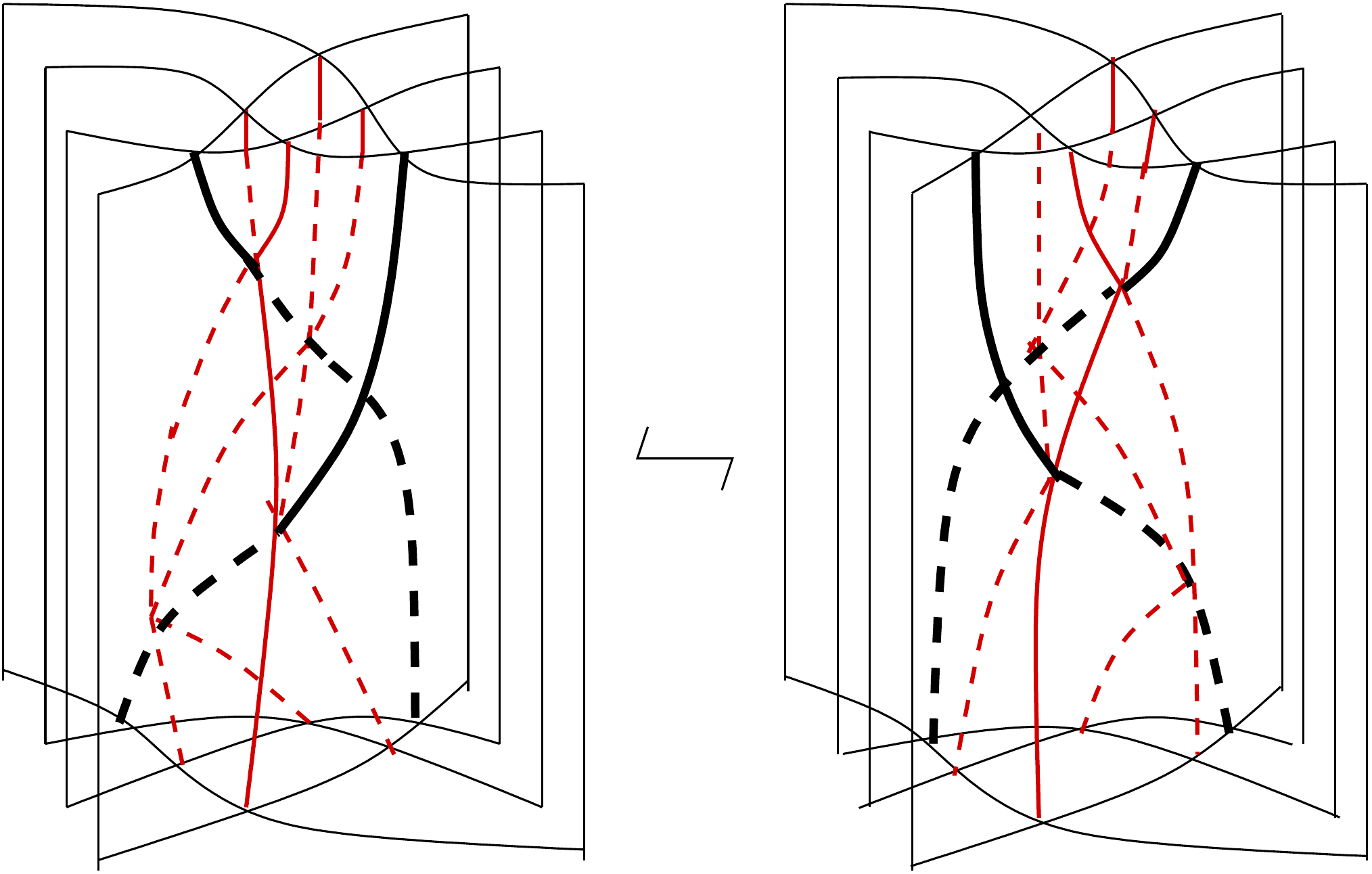}} 
 \]
The movie move $MM24$, also called the \textit{XX-move}, deals with two edges of a cobordism passing through each other in space-time. An edge of the cobordism evolves into a 2-dimensional set in space-time, and the transverse intersection of a pair of 2-dimensional disks in 4-space is an isolated point.

\item[(2-2.DD)] The intersection between two double-point arcs. This type of critical behavior for the isotopy is similar to the previous case; one only needs to replace an edge with a double-point arc. This  critical behavior is represented by the tetrahedral move $MM10$, regarded this time as the creation of an isolated point formed as the transverse intersection between a pair of 2-dimensional sets (double-point arcs times the isotopy parameter).

\item[(2-2.XD)] The intersection between an edge of the cobordism and a double-point arc. There are three movie moves that correspond to this type of critical behavior for the isotopy: the \textit{IIX-, IXI-} and \textit{XII-moves}. It is easy now to see that each of the moves $MM21$ through $MM23$ can also be interpreted as corresponding to an isolated point of the isotopy which is the transverse intersection between a pair of 2-dimensional sets (an edge and, respectively, a double-point arc times the isotopy parameter).
\end{enumerate}

The moves so far exhaust the Morse critical points for singular strata of the isotopy and transverse intersections between strata (where the isotopy direction is regarded as a height function). These are the movie parametrizations of the Roseman-type moves for diagrams of isotopic singular link cobordisms

(III)  As the isotopy passes through a singular projection certain additional changes occur among the fold sets and the multiple point sets which do not affect the topology of the projection. When watching the isotopy in the retinal plane, we may see cusps and fold lines in the facets of the projection (which involve optima and saddles for disks), as well as folds in the double-point arcs (which involve optimal points for curves). According to our definition of cobordant singular links and conventions on regular diagrams for singular link cobordisms, there are no folds or cusps for the edge set of a cobordism times the isotopy direction. 

We have the following types of changes:

\begin{enumerate}
\item[(i)] Changes that involve cusps. There is a cusp on the fold line that corresponds to the cancellation of a birth/death of a circle and a saddle, which is represented by the movie move MM11:


 \[ \put(23, 82){\fontsize{8}{8}$\text{MM11}$}
 \raisebox{0pt}{\includegraphics[height=1.1in]{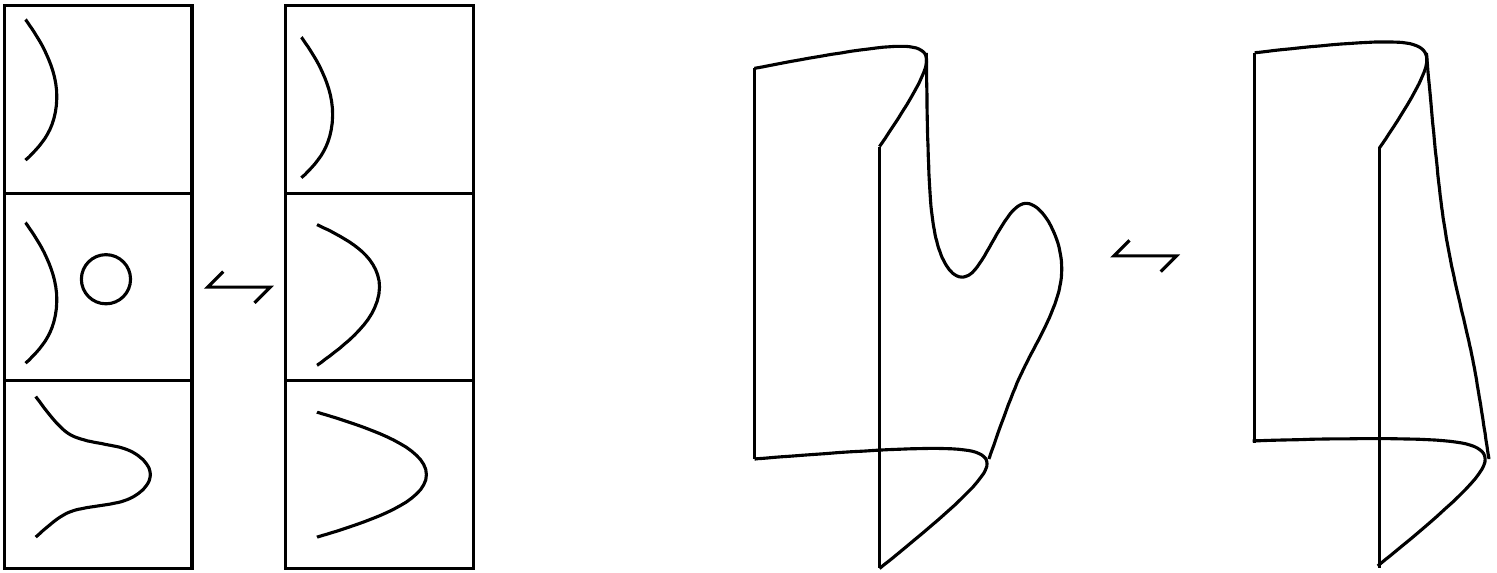}} \]
An optimum point for a double-point arc can be pushed through a branch point, as described by the movie move $MM7$:
\[  \put(25, 75){\fontsize{8}{8}$\text{MM7}$} 
\raisebox{0pt}{\includegraphics[height=1in, width=1in]{MM7-twosides}}
\hspace{1cm} \raisebox{0pt}{\includegraphics[height=0.9in]{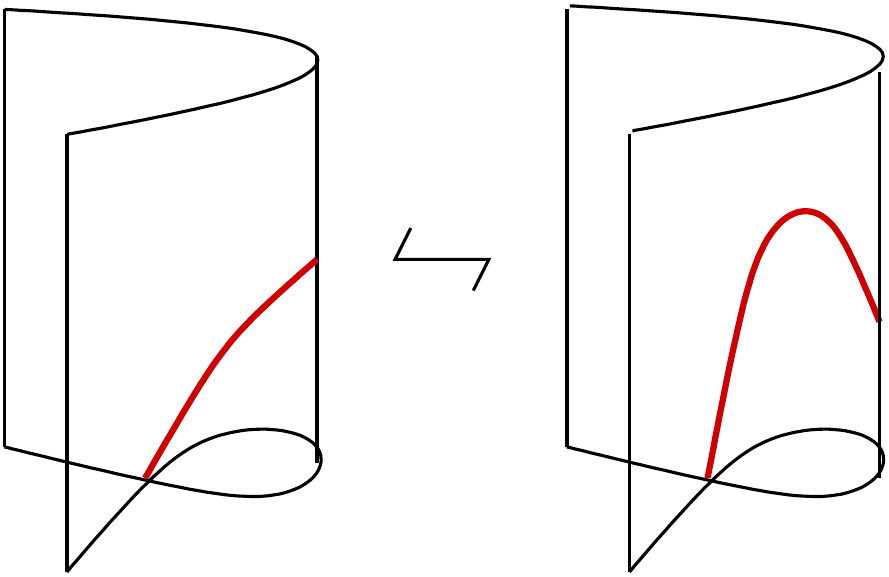}}
 \]
 In addition, a pair of canceling optima for a double-point curve can change in a cusp-like manner; this is represented by the movie move $MM9$:
 \[ \put(25, 75){\fontsize{8}{8}$\text{MM9}$} 
 \raisebox{0pt}{\includegraphics[height=1in, width=1in]{MM9-twosides}}
 \hspace{1cm} \raisebox{0pt}{\includegraphics[height=0.9in]{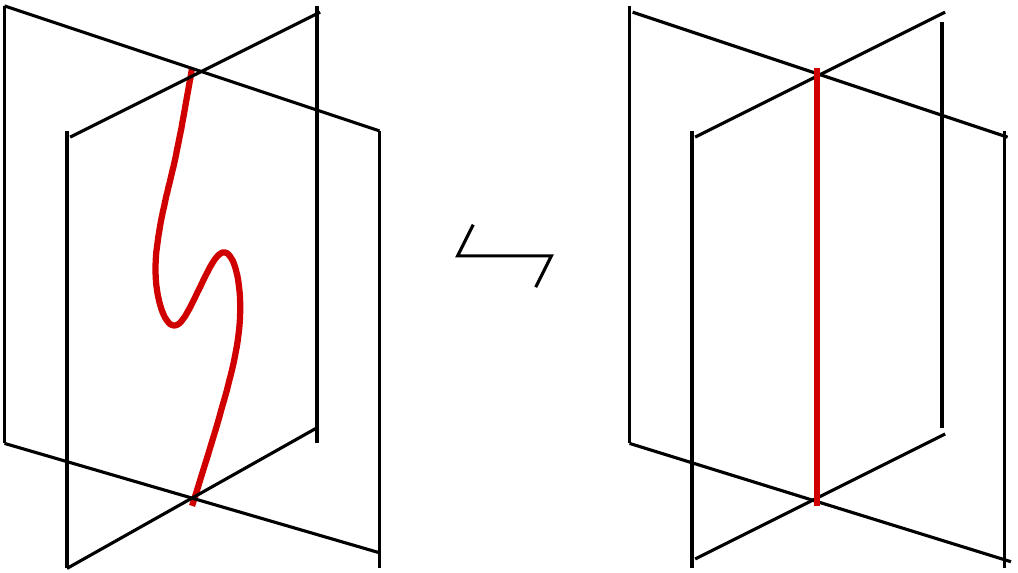}}
 \]
  
We remark that there are no analogue moves for the edge set as the moves $MM7$ and $MM9$, since our definition of cobordant singular links does not allow maximum or minimum points on edges of a cobordism. The reason for this convention is that the extended Reidemeister moves for singular link diagrams do not contain type I and type II moves involving vertices.

\item[ii)] Changes that affect the relative position of double-point arcs in relation to an optimum or saddle in a facet of the cobordism. First, a branch point can pass over an optimum or a saddle in a facet. In the isotopy direction, this corresponds to passing the surface of double-points transversely through an optimum fold line or saddle fold line, respectively, which are represented by the movie moves $MM12$ and $MM13$, respectively:
\[\raisebox{0pt}{\includegraphics[height=1.1in]{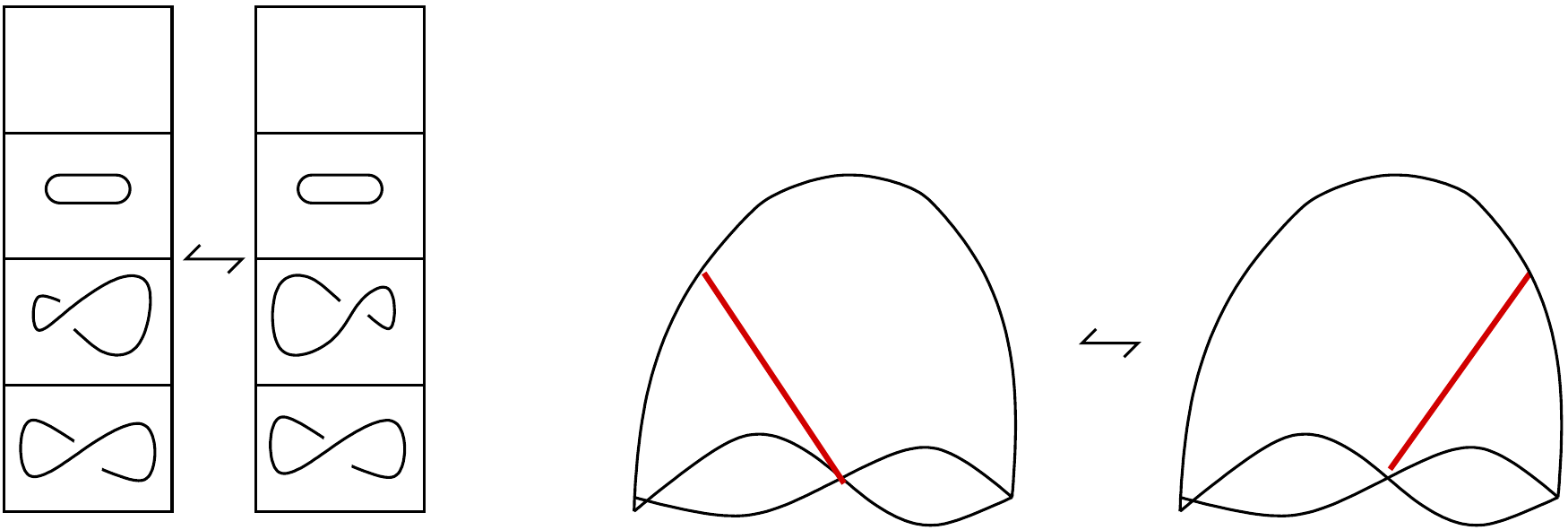}} 
   \put(-215, 82){\fontsize{8}{8}$\text{MM12}$}\]
 \[\raisebox{0pt}{\includegraphics[height=1.1in]{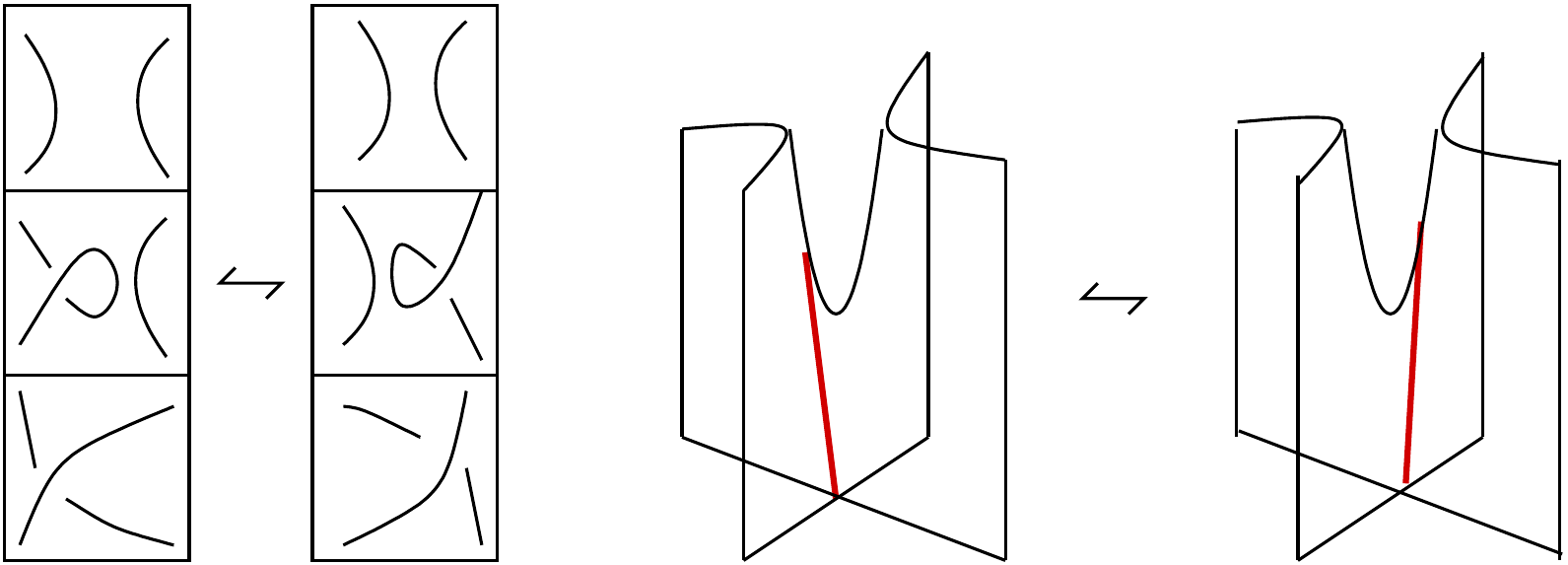}} 
   \put(-197, 82){\fontsize{8}{8}$\text{MM13}$}\]
Second, an optimum of a double-point curve may pass over an optimum or a saddle in a facet. These changes are represented by the movie moves $MM14$ and $MM15$:
  \[\raisebox{0pt}{\includegraphics[height=1.1in]{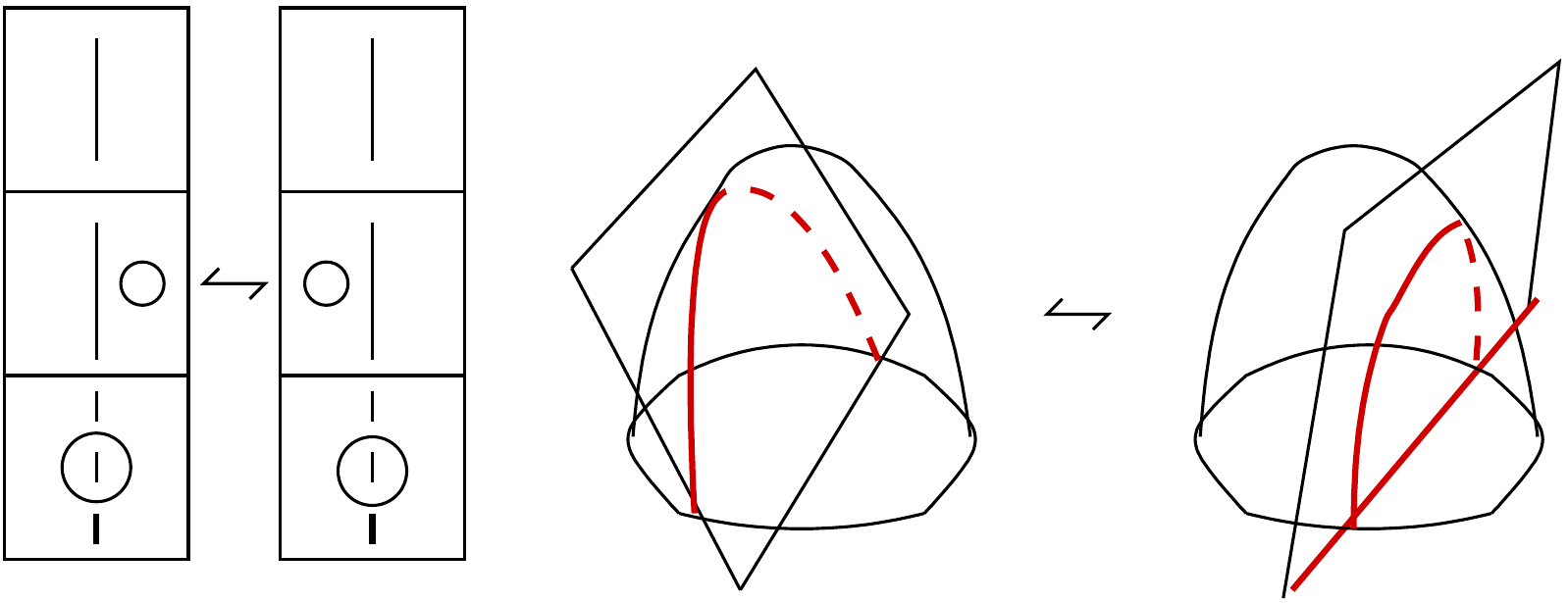}} 
   \put(-185, 83){\fontsize{8}{8}$\text{MM14}$}\]
  \[\raisebox{0pt}{\includegraphics[height=1.1in]{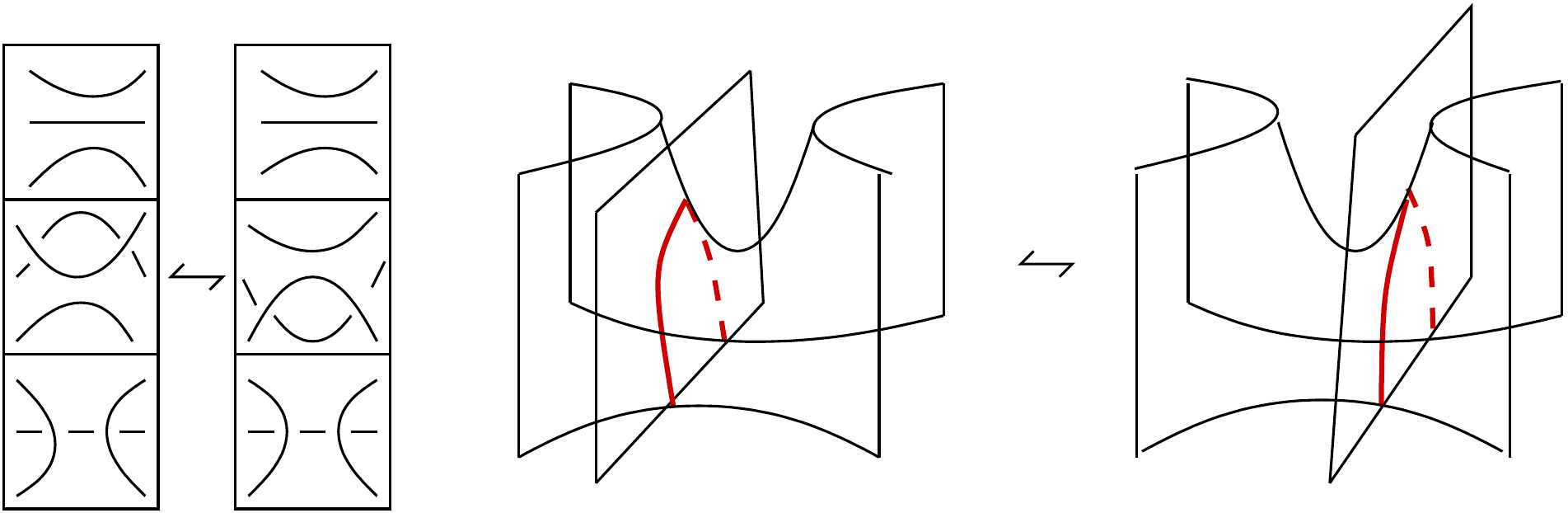}} 
   \put(-222, 77){\fontsize{8}{8}$\text{MM15}$}\]
Equivalently, the moves $MM14$ and $MM15$ occur when, in the isotopy direction, a double-point curve passes over a fold line near an optimum or, respectively, a saddle in a facet.

\item[(iii)] Changes that affect the relative position of singular points (triple point, crossing-vertex point, or singular intersection point) in relation to the optimal point of a double-point arc.  Such a singular point can be pushed over an optimum point on a double-point arc.
The case for a triple point is represented by the movie move $MM6$:
 \[ \put(25, 75){\fontsize{8}{8}$\text{MM6}$} 
 \raisebox{0pt}{\includegraphics[height=1in, width=1in]{MM6-twosides}}
 \hspace{1cm} \raisebox{0pt}{\includegraphics[height=1in]{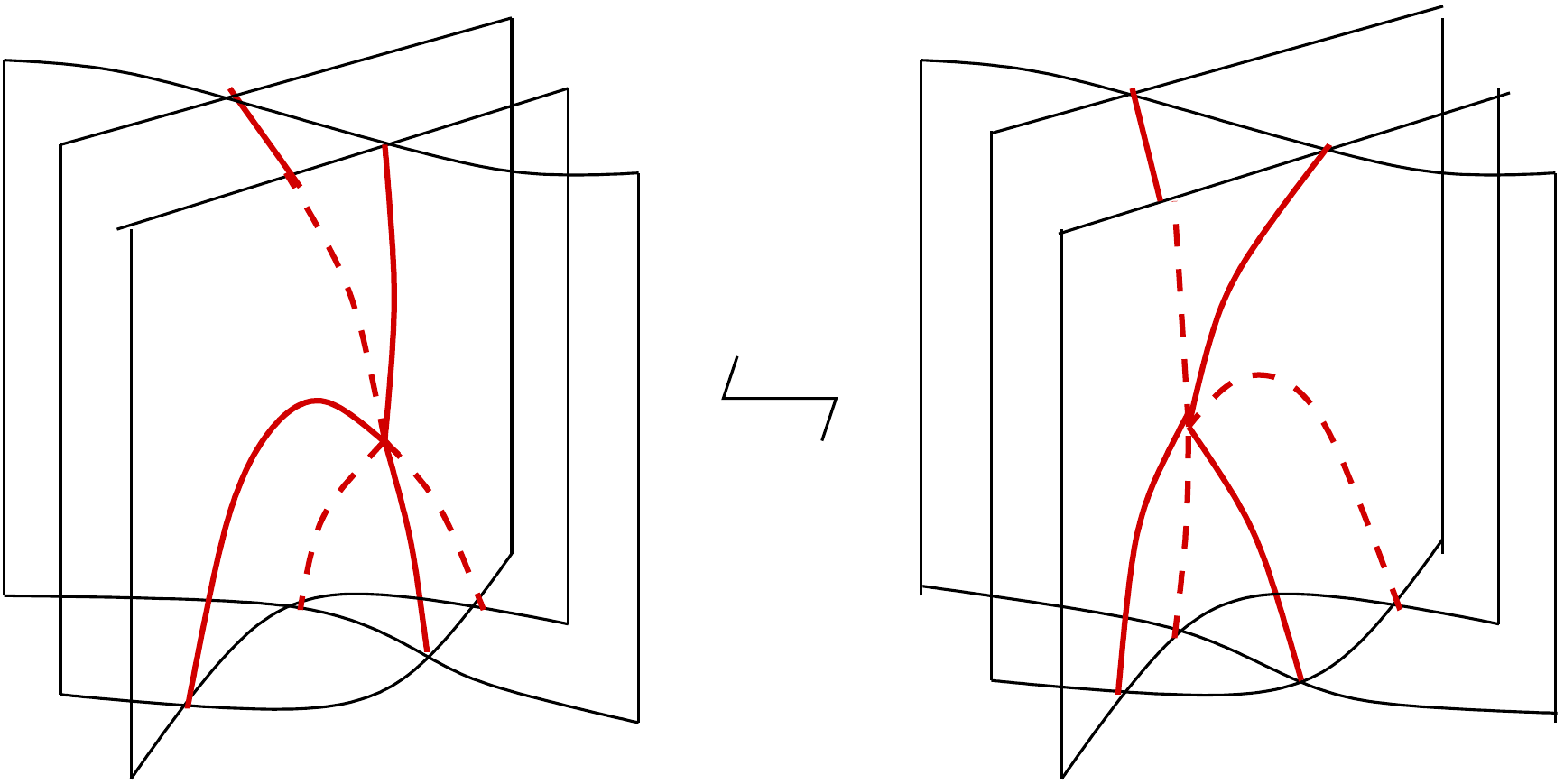}}
 \]
Similarly, a crossing-vertex point can pass through an optimum on a double-point arc as represented by the movie move $MM19$:
 \[ \put(25, 75){\fontsize{8}{8}$\text{MM19}$} 
 \raisebox{0pt}{\includegraphics[height=1in, width=1in]{MM19-twosides}}
 \hspace{1cm} \raisebox{0pt}{\includegraphics[height=0.9in]{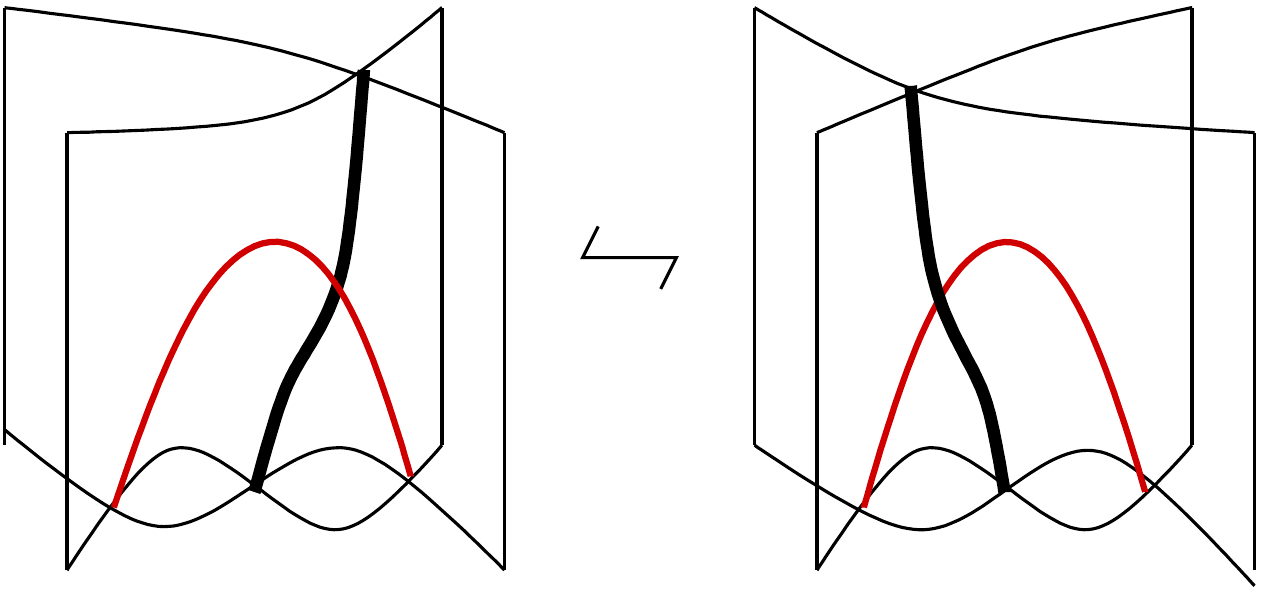}}
 \]
Finally, a singular intersection point can be moved pass an optimum on a double-point arc; this corresponds to the movies move $MM20$:
 \[ \put(25, 75){\fontsize{8}{8}$\text{MM20}$} 
 \raisebox{0pt}{\includegraphics[height=1in, width=1in]{MM20-twosides}}
 \hspace{1cm} \raisebox{0pt}{\includegraphics[height=1in]{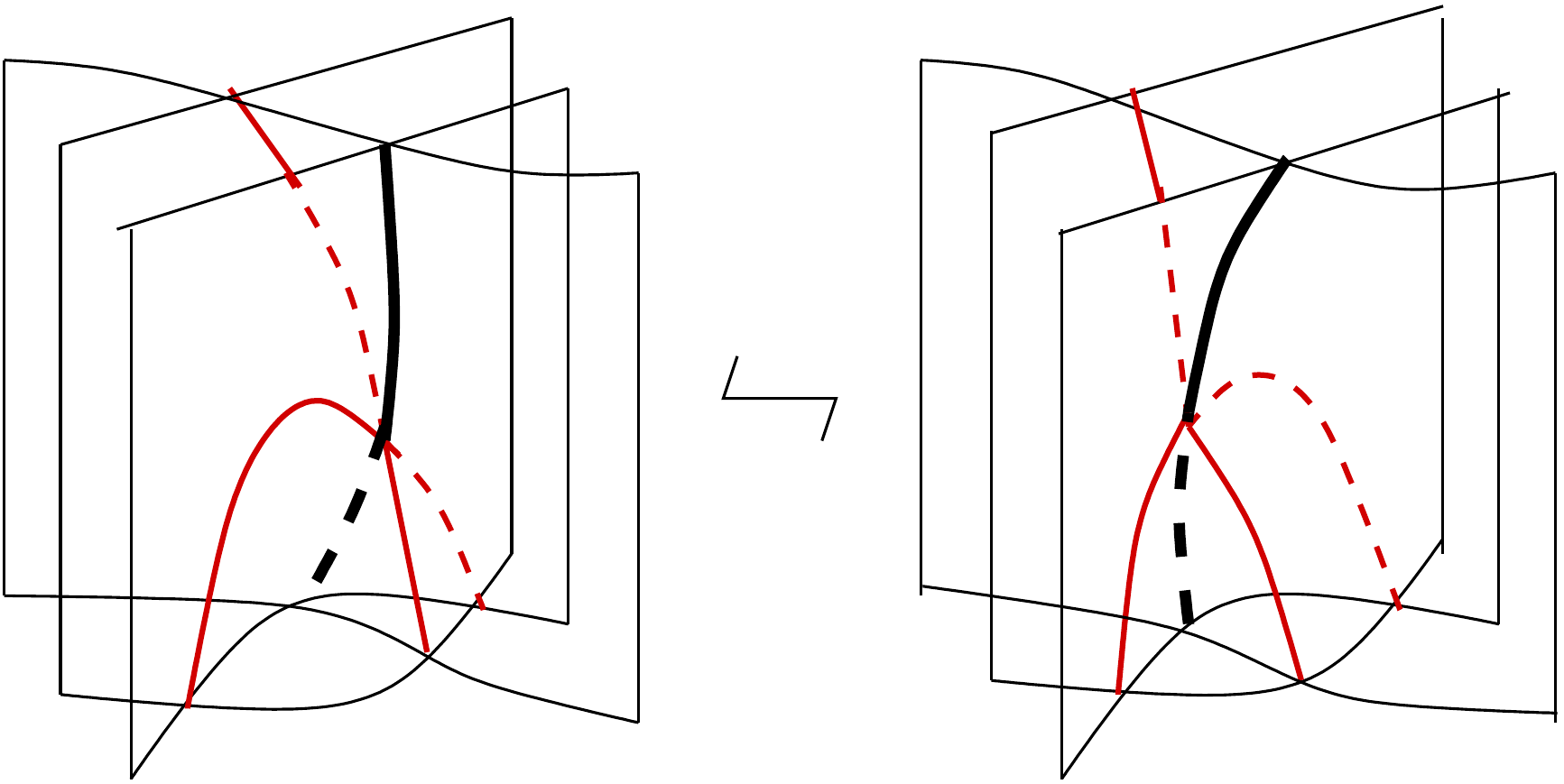}}
 \]

\end{enumerate}
The list (i)--(iii) above exhaust the list of possible changes in folds and transverse intersections between fold lines and multiple point sets. This completes the proof.


\section{Concluding remarks and comments} \label{sec:remarks}

\begin{remark}
We proved Theorem~\ref{thm:movie-moves} by a classification of codimension one singularities in the retinal plane. With the exception of the moves $MM19$, and $MM20$ the other eight movie moves of type (III) are the Carter-Saito movie moves~\cite{CS} that supplement the movie parametrizations of the Roseman moves~\cite{Ros} for knotted surfaces to the case when there is a height function in the 3-space of the projection. In addition, the seven movie moves of type (I) and (II) that supplement the set of movie parametrizations of the seven Roseman moves are $MM16$ through $MM18$ and $MM21$ through $MM24$.

It was noted and showed in \cite{Ca0} that the movie moves $MM12$ and $MM14$ follow from the movie moves $MM11$ and $MM13$ and, respectively, $MM11$ and $MM15$, together with interchanging the levels of distant critical points. This is due to a theorem first discovered by John Fischer (see \cite[Figure 6.17]{Ca0}).
\end{remark}

\begin{remark}
Moves in which distant critical points can change levels were not listed among the movie moves, but are included in our Movie-Move Theorem. Figure~\ref{fig:exchanges} shows such moves for movies. For example, a pair of crossing-vertex points can change their relative position (see the first movie move in~Figure~\ref{fig:exchanges}), as can happen with a  singular intersection point and a crossing-vertex point (this corresponds to the second movie move in Figure~\ref{fig:exchanges}).
Similarly, there is a movie move corresponding to changing the relative heights of a triple point and a crossing-vertex point, or of a pair of triple points (the third and fourth movie moves, respectively, in Figure~\ref{fig:exchanges}). These are only a few examples of moves that correspond to exchanging the order in which ESSIs occur, when they occur in disjoint neighborhoods.

\begin{figure}[ht]
\[
\raisebox{-15pt}{\includegraphics[width = 1in, height=1in]{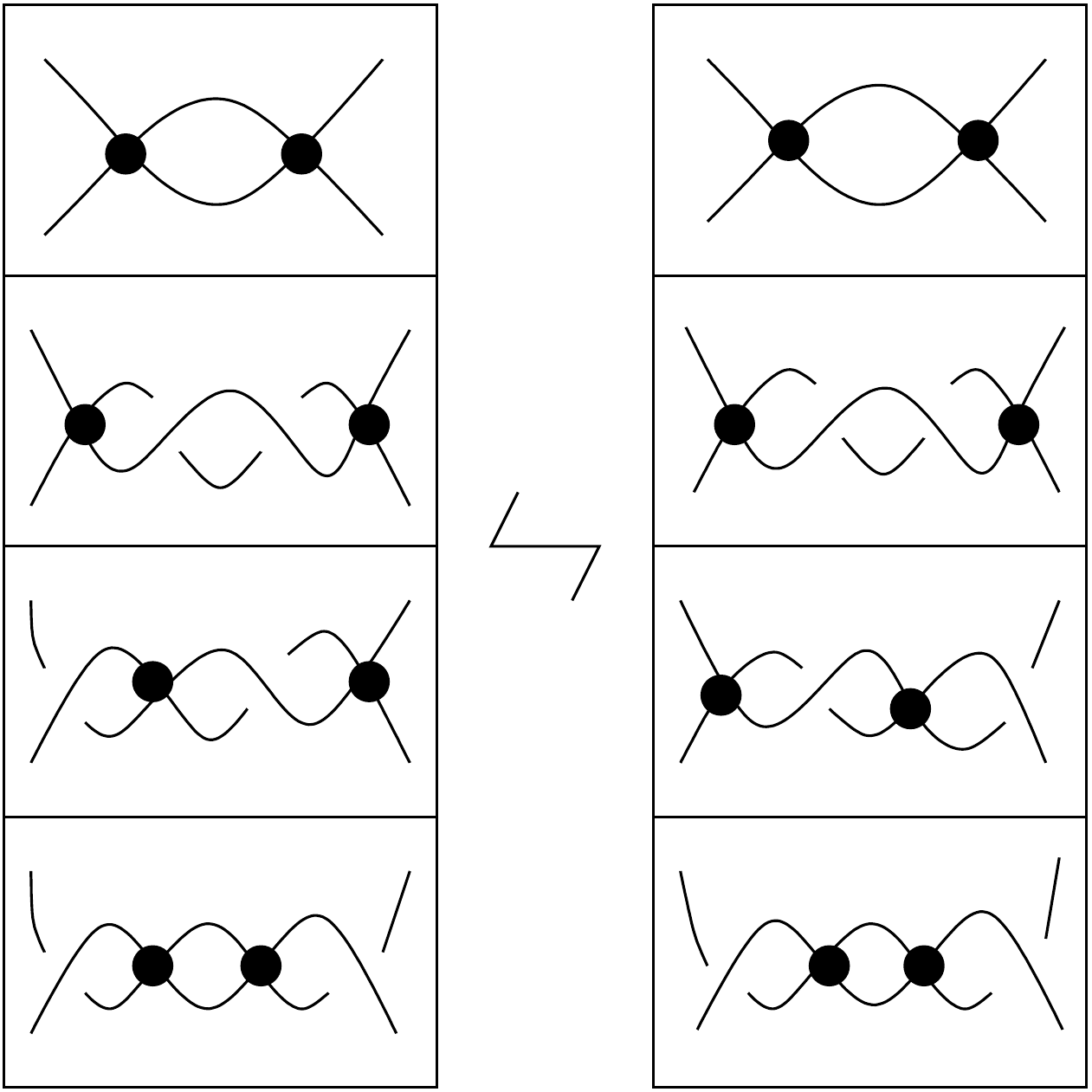}} \hspace{0.7cm} \raisebox{-15pt}{\includegraphics[width = 1in, height=1in]{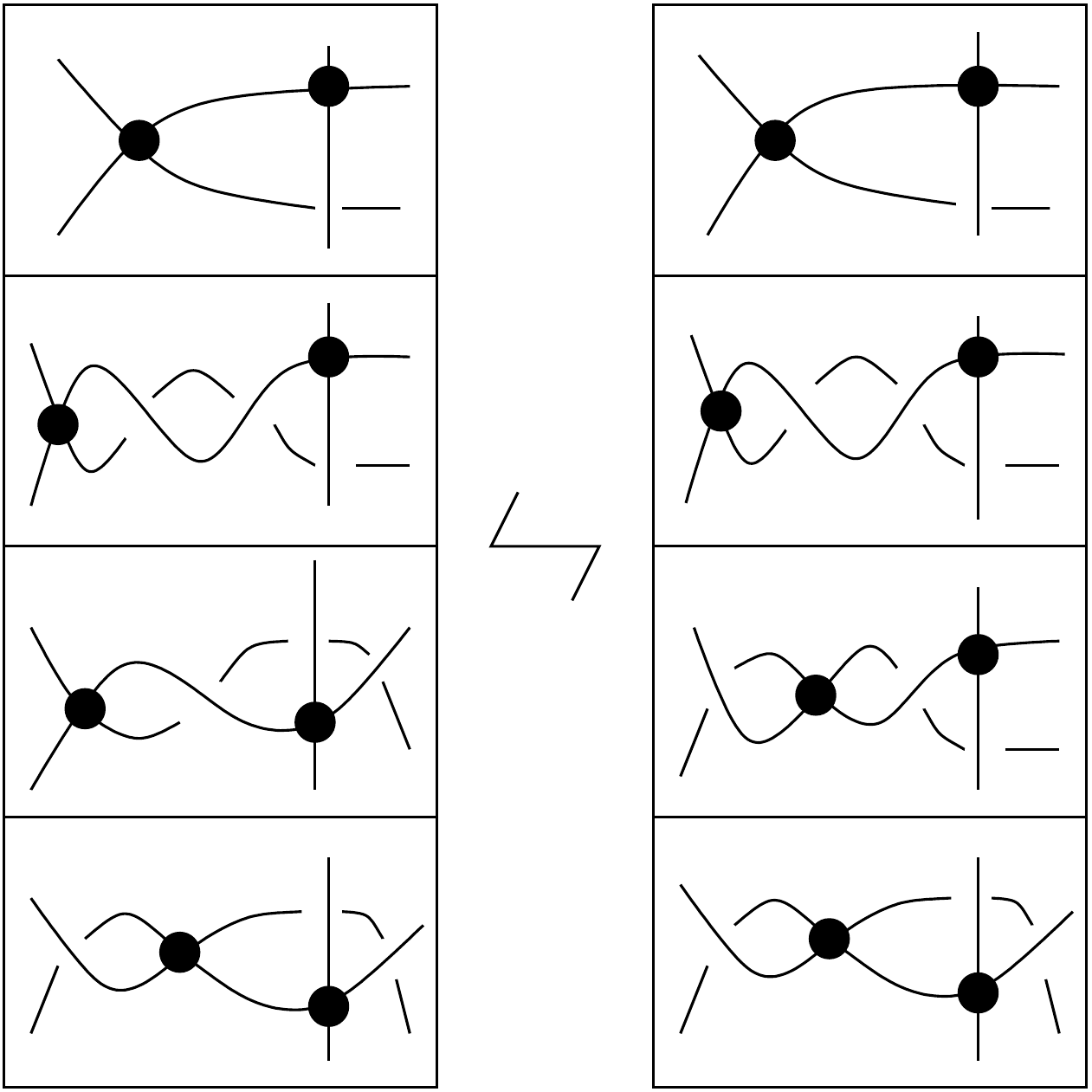}} \hspace{0.7cm}
\raisebox{-15pt}{\includegraphics[height=1in]{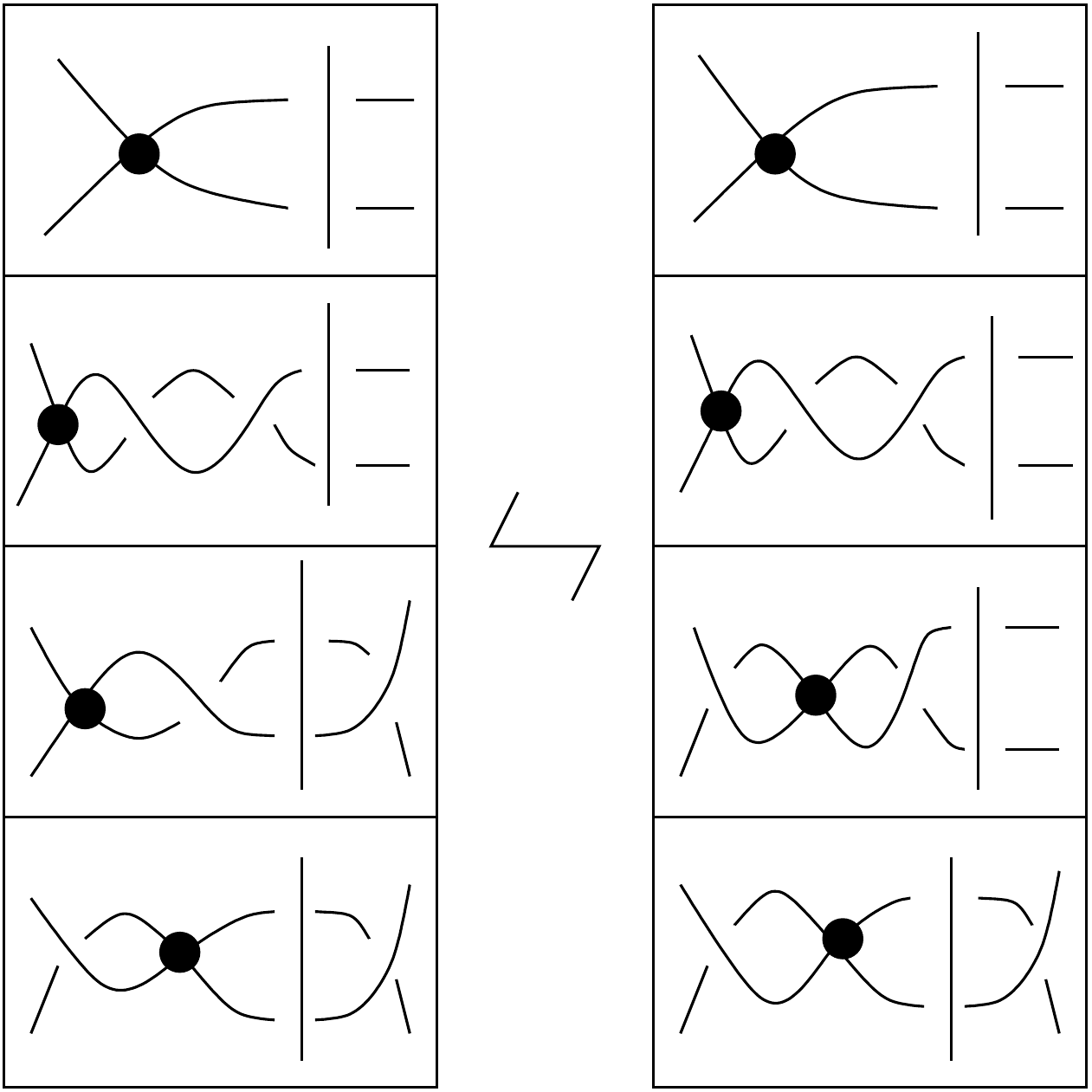}} \hspace{0.7cm} \raisebox{-15pt}{\includegraphics[height=1in]{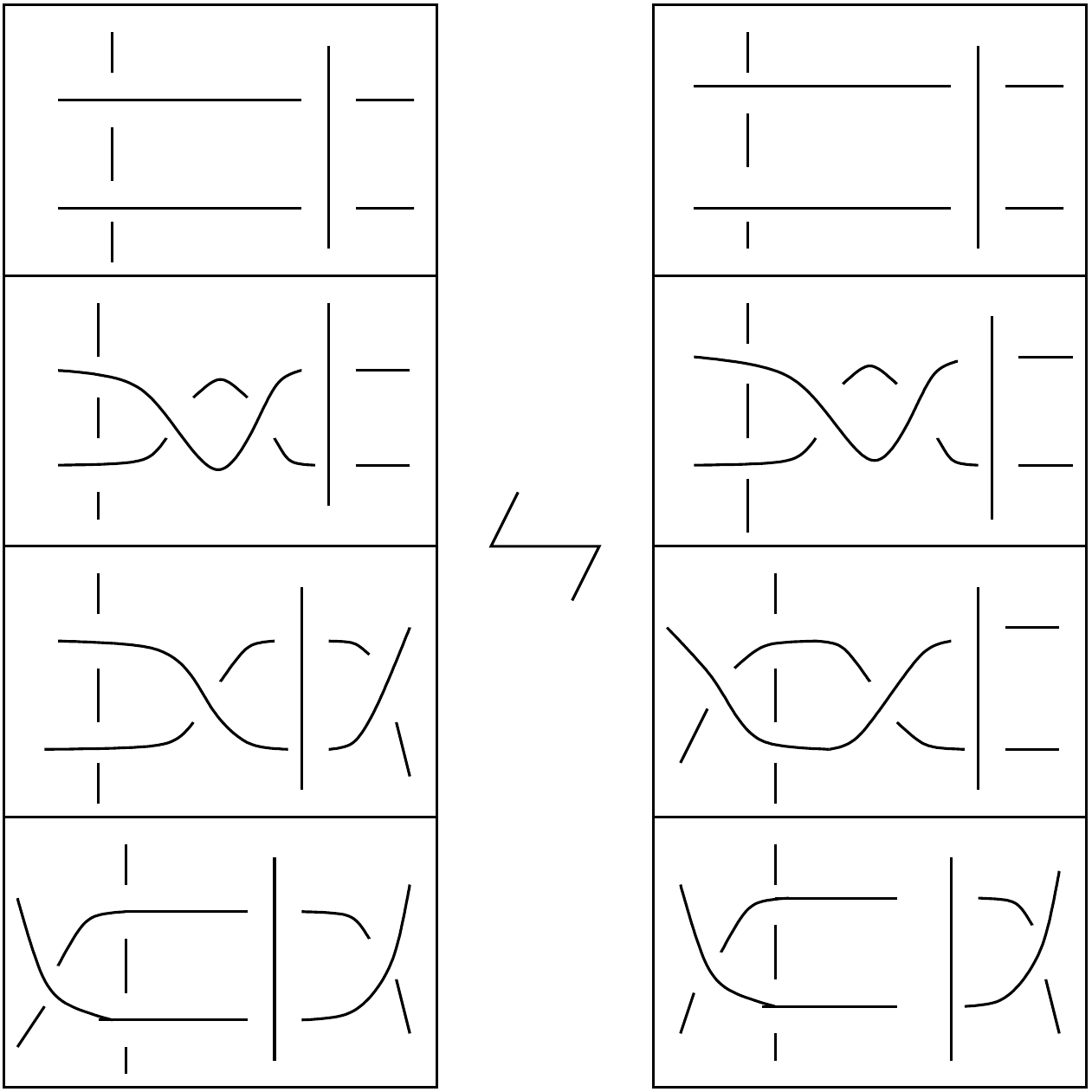}}  \]
\caption{Examples of movie moves in which critical points change levels} \label{fig:exchanges} 
\end{figure}
\end{remark}

\begin{remark}
In this paper we focused on cobordisms between (non-empty) singular links, and said that two singular links are cobordant if they are related via singular link isotopy and Morse transformations (which gave rise to the notion of ESSIs). Since the extended Reidemeister moves for singular link diagrams do not contain type II (and type I) moves involving vertices, we did not include the bigon move among the ESSIs (this move was given in Figure~\ref{fig:bigon}). If one desires to allow cobordisms between two singular links where at least one of these links is the empty link, then the bigon move must be included in the list of ESSIs. In that case, the corresponding list of movie moves of type (III) would be larger, as one would need to take into account the bigon move and its interaction with the ESSIs.

In addition, in this paper we did not attempt to provide a categorical description of singular link cobordisms and their isotopies such as that given in~\cite{CRS} for knotted surfaces. For that, one would need to impose a height function in each still (of a movie description for a singular link cobordism) to obtain an interpretation of the movie moves in the language of 2-categories. In such a setup, the list of movie moves of type (III) would be much larger.
\end{remark}

\textbf{Motivation for this paper.} This paper originated from author's interest in extending the Khovanov homology~\cite{Kh} to singular links, and then asking whether the resulting homology theory satisfies the functoriality property with respect to singular link cobordisms. This problem will be considered in a subsequent paper. Another source of motivation and inspiration was the paper of Scott Carter~\cite{Ca0}, which provides the Roseman-type moves for embedded foams in 4-space.

\textbf{Acknowledgements.} 
This project has started during author's participation in the \textit{Faculty-Undergraduate Research Student Teams} (FURST) Program supported by the NSF Grant DMS--1156273, and has grown from long and useful discussions with Heather M. Russell. The author is indebted to Masahico Saito for illuminating conversations and suggestions. She is also thankful to Scott Carter and Charles Frohman for being patient to answer many questions on the topic.


\end{document}